\newtheorem{theorem}{Theorem}
\newtheorem{proposition}[theorem]{Proposition}
\newtheorem{lemma}[theorem]{Lemma}
\numberwithin{theorem}{section}
\theoremstyle{definition}
\newtheorem{definition}[theorem]{Definition}
\theoremstyle{remark}
\newtheorem{remark}[theorem]{Remark}
\newcommand{\mR}{\mathbb{R}}
\newcommand{\mC}{\mathbb{C}}
\newcommand{\mN}{\mathbb{N}}
\newcommand{\mS}{\mathbb{S}}
\newcommand{\cM}{\mathcal{M}}
\newcommand{\cP}{\mathcal{P}}
\newcommand{\cC}{\mathcal{C}}
\newcommand{\cH}{\mathcal{H}}
\newcommand{\ux}{\underline{x}}
\newcommand{\uy}{\underline{y}}
\newcommand{\uu}{\underline{u}}
\newcommand{\uv}{\underline{v}}
\newcommand{\uz}{\underline{z}}
\newcommand{\us}{\underline{s}}
\newcommand{\ut}{\underline{t}}
\newcommand{\uom}{\underline{\omega}}
\newcommand{\uta}{\underline{\tau}}
\newcommand{\utd}{\underline{\tau}^{\dagger}}
\newcommand{\uet}{\underline{\eta}}
\newcommand{\uetd}{\underline{\eta}^{\dagger}}
\newcommand{\upx}{\partial_{\underline{x}}}
\newcommand{\upy}{\partial_{\underline{y}}}
\newcommand{\upz}{\partial_{\underline{z}}}
\def\a{{\alpha}} 
\def\b{{\beta}}
\numberwithin{equation}{section}
\begin{document}
 
\title[The monogenic Hua-Radon transform and its inverse]{The monogenic Hua-Radon transform and its inverse}

\author{Denis Constales}
\address{Department of Electronics and Information Systems \\Faculty of Engineering and Architecture\\Ghent University\\Krijgslaan 281, 9000 Gent\\ Belgium.}
\email{Denis.Constales@UGent.be}

\author{Hendrik De Bie}
\address{Department of Electronics and Information Systems \\Faculty of Engineering and Architecture\\Ghent University\\Krijgslaan 281, 9000 Gent\\ Belgium.}
\email{Hendrik.DeBie@UGent.be}

\author{Teppo Mertens}
\address{Department of Electronics and Information Systems \\Faculty of Engineering and Architecture\\Ghent University\\Krijgslaan 281, 9000 Gent\\ Belgium.}
\email{Teppo.Mertens@UGent.be}

\author{Frank Sommen}
\address{Department of Electronics and Information Systems \\Faculty of Engineering and Architecture\\Ghent University\\Krijgslaan 281, 9000 Gent\\ Belgium.}
\email{Franciscus.Sommen@UGent.be}

\date{\today}
\keywords{Holomorphic functions, Monogenic functions, Lie ball, Lie sphere, Radon-type transforms.}
\subjclass[2010]{32A50, 30G35, 44A12.} 

\begin{abstract}
The monogenic Hua-Radon transform is defined as an orthogonal projection on holomorphic functions in the Lie sphere. Extending the work of Sabadini and Sommen, \emph{J. Geom. Anal.}, {\bf{29}} (2019), 2709-2737, we determine its reproducing kernel. Integrating this kernel over the Stiefel manifold yields a linear combination of the zonal spherical monogenics. Using the reproducing properties of those monogenics we obtain an inversion for the monogenic Hua-Radon transform.
\end{abstract}

\maketitle

\tableofcontents

\section{Introduction}
\setcounter{equation}{0}

The Radon transform is a well-known integral transform with many applications in both pure and applied mathematics. It was extended to the Clifford setting by Sommen in \cite{Radon to Clifford, Radon and X-ray, Clifford and integral}, see also \cite{Quaternionic and Radon}.

The Szeg\H o-Radon transform, a variant of the Clifford Radon transform, was introduced in \cite{Szego} by Colombo, Sabadini and Sommen. It was originally defined as an orthogonal projection of a Hilbert module, but it can equally be written as an integral transform over the unit sphere $\mS^{m-1}$ with respect to a reproducing kernel. Like the Clifford Radon transform, it is a projection from a space of monogenic functions in $m$ variables over the real Clifford algebra $\mathbb{R}_m$ onto a space of monogenic functions in 2 variables over the complex Clifford algebra $\mathbb{C}_m$.

Applying the dual transform $\tilde{R}$ to the reproducing kernel of the Szeg\H o-Radon transform, i.e. taking the integral over a Stiefel manifold of the reproducing kernel, will result in a linear combination of the zonal spherical monogenics. Consequently, when acting on a monogenic polynomial $M_k(\ux)$ of degree $k$ with the composition of the Szeg\H o-Radon transform with its dual, one obtains a scalar multiple of $M_k(\ux)$, where the proportionality constant $\theta_k$ depends on $k$. As this scalar $\theta_k$ can be accounted for by the Gamma operator $\Gamma_{\ux}$, one can invert the Szeg\H o-Radon transform by applying the operator $\theta_{-\Gamma_{\ux}} \tilde{R}$. These ideas were established in \cite{Szego}.

It was shown in e.g. \cite{Hua, Mor,holomorphic} that monogenic functions on the unit ball admit a holomorphic extension in the Lie ball. Following this idea, Sabadini and Sommen defined several Radon-type transforms over the Lie sphere in \cite{radonlie}. They showed that each of these can be written as an integral transform over the Lie sphere with respect to a certain kernel. One of these is the monogenic Hua-Radon transform, which we will study in great detail.

In this paper we complete the work of Sabadini and Sommen, by determining explicitly the kernel of the monogenic Hua-Radon transform, which is defined as an orthogonal projection onto a space of holomorphic functions generated by a specific basis. Moreover, we will obtain an inversion for this transform, along the lines of \cite{Szego}. At this point we require that the dimension $m\geq 3$, which can be justified by the fact that most of the basis is annihilated in case $m<3$. It turns out that using the dual transform will not produce a full inversion for the monogenic Hua-Radon transform, but it will yield a projection operator of a holomorphic function onto the term in its Fischer decomposition corresponding to a certain power of $\uz$. Hence we will introduce a total monogenic Hua-Radon transform, for which the outlined process leads to a complete inversion.

The structure of the paper is as follows. In Section \ref{Section::preliminaries} we give the necessary preliminaries on Clifford algebras, the Lie sphere and Clifford analysis that will be needed in this paper. In Section \ref{Section::transform} we define the monogenic Hua-Radon transform as an orthogonal projection. Section \ref{Section::technical} contains some technical lemmas required in order to compute the kernel of the monogenic Hua-Radon transform in Section \ref{Section::kernel}, more precisely in Theorem \ref{Thm coeff mono hua}. The inversion of the monogenic Hua-Radon transform will be discussed in Section \ref{Section::inversion}. Finally, the more tedious calculations involving hypergeometric series and combinatorial identities will be performed in Appendix \ref{appendix A} and \ref{appendix B}.

\section{Preliminaries}\label{Section::preliminaries}
\setcounter{equation}{0}

In this section we introduce all notations and preliminary results that will be useful for the paper. We mostly follow the notations from \cite{radonlie}.

\subsection{Clifford algebras}

Consider the vector space $\mR^{m}$ with canonical orthonormal basis $( e_1, e_2, \ldots, e_{m})$. The Clifford algebra $\mR_{m}$, respectively $\mC_m$, is the real, respectively complex, algebra generated by the basis elements $e_1, e_2, \ldots, e_{m}$ under the relations
\begin{align*}
e_j^2 &= -1, \qquad j \in \{1, \ldots, m\},\\
e_j e_k + e_k e_j &=0, \phantom{-}\qquad j \neq k.
\end{align*}
Any element $\a$ in $\mR_m$ (or $\mC_m$) is of the form

\[
\a = \sum_{A\subset\{1,\ldots,m\}} \alpha_A e_A
\]
where $\a_A\in\mR$ (or $\a_A\in\mC$), $A = \{i_1,\ldots,i_l\}$ where $i_1<\ldots <i_l$ is a multi-index such that $e_A = e_{i_1}\ldots e_{i_l}$ and $e_{\emptyset} = 1$. The elements of $\mR_m$ (or $\mC_m$) which are linear combinations of only the basis vectors $e_j$ are called 1-vectors and will be underlined, e.g. $\uv = \sum_{j=1}^m v_j e_j$. The scalar part $\alpha_{\emptyset}$ of an element $\alpha$ will be denoted by $[\alpha]_0$.\\
The Hermitian conjugation is an automorphism on $\mC_m$ defined for $\a,\b\in\mC_m$ as

\begin{align*}
(\a\b)^{\dagger} &= \b^{\dagger}\a^{\dagger},\\
(\a + \b)^{\dagger} &= \a^{\dagger} + \b^{\dagger},\\
(\a_A e_A)^{\dagger} &= \overline{\a_A} e_A^{\dagger},\\
e_j^{\dagger}&=-e_j \qquad j\in\{1,\ldots,m\}.
\end{align*}
where $\overline{\a_A}$ stands for the complex conjugate of the complex number $\a_A$.

\subsection{Clifford analysis}

The norm of a 1-vector $\uv$ in either $\mR_m$ or $\mC_m$ is defined as

\[
|\uv|^2 = \sum_{j=1}^m v_j^2 = -\uv^2.
\]
In the real case, we can use $|\uv| = \sqrt{\sum_{j=1}^m v_j^2}$, whereas in the complex case we will be working with $|\uv|^2 = -\uv^2$, i.e. a complexified version of the square of the real norm.\\
The open unit ball with center at the origin in $\mR^m$ will be denoted by $B(0,1)$, while $\mS^{m-1}$ will denote the unit sphere, i.e. $\mS^{m-1} = \left\{\uv\in\mR^m \mid |\uv|^2 = 1\right\}.$
The area of the unit sphere is given by

\[
A_m = \frac{2\pi^{m/2}}{\Gamma\left(\frac{m}{2}\right)},
\]
where $\Gamma$ is the gamma function.\\
We define the scalar product of two 1-vectors $\uu$ and $\uv$ as $\langle \uu,\uv\rangle = \sum_{j=1}^m u_jv_j$. The wedge product of $\uu$ and $\uv$ is defined as $\uu\wedge\uv = \sum_{i<j}(u_iv_j-u_jv_i)e_{i}e_{j}$. Easy calculations show
\begin{equation}\label{product vectors}
\uu\phantom{.}\uv = -\langle\uu,\uv\rangle + \uu\wedge\uv.
\end{equation}

We will need the following result which was proven in \cite{Szego}.

\begin{lemma}\label{taulemma}
Let $\ut,\us\in\mS^{m-1}$ be such that $\langle\ut,\us\rangle = 0$ and let $\uta = \ut+i\us\in\mC^m$. Then $\utd = -\ut + i\us$ and
\begin{enumerate}[(i)]
\item
$\uta\phantom{.}\utd\uta = 4\uta$,
\item
$\uta^2 = (\utd)^2 = 0$,
\item
$\uta\phantom{.}\utd + \utd\uta = 4$.
\end{enumerate}
\end{lemma}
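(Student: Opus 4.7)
The plan is to reduce everything to the basic Clifford identity $\uu\,\uv + \uv\,\uu = -2\langle \uu,\uv\rangle$, which follows from \eqref{product vectors} by symmetrising. Under the hypotheses $|\ut|=|\us|=1$ and $\langle\ut,\us\rangle=0$, this gives $\ut^2=\us^2=-1$ and the anticommutation relation $\ut\us+\us\ut=0$.

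First I would establish (ii), since it is the cleanest. Expanding
\[
\uta^2 = (\ut+i\us)^2 = \ut^2 + i(\ut\us+\us\ut) + i^2 \us^2 = -1 + 0 + 1 = 0,
\]
and an identical computation handles $(\utd)^2=(-\ut+i\us)^2=0$, noting that replacing $\ut$ by $-\ut$ changes neither $\ut^2$ nor the anticommutator $\ut\us+\us\ut$.

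Next I would verify (iii) by direct expansion of both products. Using again $\ut^2=\us^2=-1$ together with $\ut\us+\us\ut=0$, the terms $\ut\us$ and $\us\ut$ cancel between $\uta\utd$ and $\utd\uta$, while the four $\pm\ut^2$ and $\mp\us^2$ contributions add to $4$. Concretely
\[
\uta\utd+\utd\uta=(\ut+i\us)(-\ut+i\us)+(-\ut+i\us)(\ut+i\us)=2-2\ut^2-2\us^2=4.
\]
Finally, (i) drops out for free: using (iii) first and then (ii),
\[
\uta\,\utd\,\uta=\uta\,(\utd\uta)=\uta(4-\uta\utd)=4\uta-\uta^2\,\utd=4\uta.
\]

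There is no real obstacle here; the only thing to be careful about is the order of factors, because $\ut$ and $\us$ anticommute rather than commute, so standard binomial expansion must be avoided and every product expanded strictly from left to right. Once the relations $\ut^2=\us^2=-1$ and $\ut\us=-\us\ut$ are in hand, the three identities are essentially one-line computations, and the cleanest logical order is (ii) $\Rightarrow$ (iii) $\Rightarrow$ (i), since (i) follows purely formally from the other two.
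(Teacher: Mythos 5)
Your proof is correct and uses the natural direct-verification approach; the paper does not prove this lemma itself (it cites the Colombo--Sabadini--Sommen paper), but reducing everything to $\ut^2=\us^2=-1$ and $\ut\us=-\us\ut$ and then proving (ii), (iii), and deducing (i) formally is exactly the expected argument. Two small things to fix. First, the displayed intermediate step in your verification of (iii) reads $2-2\ut^2-2\us^2$, which evaluates to $6$; the cross terms cancel and what remains from the two products is $-\ut^2-\us^2$ each, i.e.\ $-2\ut^2-2\us^2=4$, which is what your surrounding prose correctly describes, so this is a typo but should be corrected. Second, the lemma also asserts $\utd=-\ut+i\us$ as part of its conclusion, and you take this for granted; it is worth one line to record that, since the coefficients $t_j,s_j$ are real and $e_j^\dagger=-e_j$, one has $\ut^\dagger=-\ut$ and $(i\us)^\dagger=\bar{i}\,\us^\dagger=(-i)(-\us)=i\us$, whence $\uta^\dagger=-\ut+i\us$.
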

We will be working in a complex setting, hence we will use the complexified Dirac operator 

\[
\upz = \sum_{j=1}^m e_j\partial_{z_j}.
\]
Its square satisfies $\upz^2= - \Delta_{\uz}$, where $\Delta_{\uz} = \sum\limits_{j=1}^{m}\partial_{z_{j}}^2$ is the complexified Laplace operator. If we use the real Laplace operator, we will denote it by $\Delta_{\ux} = \sum\limits_{j=1}^{m}\partial_{x_{j}}^2$. The symbol of the Dirac operator $\upz$ is denoted by the vector variable
\[
\uz= \sum_{j=1}^{m}e_{j}z_j
\]
which is a complex-valued variable. If we are working with real variables, we will denote them by $\ux$. By complexifying the variables, the square of the norm and the Dirac operator, we can use the necessary results of real Clifford analysis.

\begin{definition}
A function $f:\Omega\subset \mC^m \to\mC_m$ which is continuously differentiable in the open set $\Omega$ is called (left) monogenic in $\Omega$ if $f$ is holomorphic and $f$ is in the kernel of the complexified Dirac operator $\upz$, i.e. $\upz f = \sum_{j=1}^m e_j \partial_{z_j}f = 0$. The right $\mC_m$-module of (left) monogenic functions in $\Omega$ is denoted by $\cM(\Omega)$.\\
A function $f:\Omega\subset \mC^m \to\mC_m$ which is continuously differentiable in the open set $\Omega$ is called harmonic in $\Omega$ if $f$ is holomorphic and $f$ is in the kernel of the complexified Laplace operator $\Delta_{\uz} = -\upz^2$. The right $\mC_m$-module of harmonic functions in $\Omega$ is denoted by $\cH(\Omega)$.
\end{definition}

Looking at the set of $k$-homogeneous polynomials $\cP_k(\Omega)$ with $\Omega\subset \mR^m$ we define the set of $k$-homogeneous monogenics and harmonics as
\begin{align*}
\cM_k(\Omega) &= \cM(\Omega)\cap\cP_k(\Omega),\\
\cH_k(\Omega) &= \cH(\Omega)\cap\cP_k(\Omega).
\end{align*}

It is a well known fact, (see e.g. \cite{Groen}), that for each $H_k(x)\in\cH_k(\Omega)$ there exist unique monogenic polynomials $M_k(x)\in\cM_k(\Omega),M_{k-1}(x)\in\cM_{k-1}(\Omega)$ such that
\begin{align*}
H_k(x) = M_k(x) + \ux M_{k-1}(x)
\end{align*}
Moreover these monogenics are determined by
\begin{align}
M_k(x) &= \left(1+\frac{1}{2k+m-2} \ux\upx\right)H_k(x), \label{monogenic projection harmonic}\\
M_{k-1}(x) &= -\frac{1}{2k+m-2} \upx. \nonumber
\end{align}

\subsection{The Lie ball}

As we are complexifying our variables, we will also complexify the space over which we are working. To this end we will consider the Lie sphere $LS^{m-1}$ and Lie ball $LB(0,1)$, instead of the unit sphere $\mS^{m-1}$ and unit ball $B(0,1)$.

\begin{definition}
The Lie ball can be defined as

\[
LB(0,1) = \{\uz=\ux+i\uy\in \mC^m \mid S_{\ux,\uy} \subset B(0,1)\}
\]
where $S_{\ux,\uy}$ is the codimension 2 sphere:

\[
S_{\ux,\uy} = \{\underline{u}\in\mR^m \mid \phantom{ } |\underline{u}-\ux| = |\uy|, \langle \underline{u} - \ux,\uy\rangle = 0\}.
\]
\end{definition}

\begin{remark}
Another way to introduce the Lie ball (see \cite{FourierBorel, Lie}) is to consider the Lie norm

\[
L(\uz)^2 = L(\ux+i\uy)^2 = \sup_{\underline{u}\in S_{\ux,\uy}} |\underline{u}|^2 = |\ux|^2 + |\uy|^2 + 2|\ux\wedge\uy|
\]
where $\uz = \ux+i\uy\in\mC^m$. Using the Lie norm we define the Lie ball as

\[
LB(0,1) = \{\uz\in\mC^m\mid L(\uz)<1\}.
\]
\end{remark}

\begin{definition}
The Lie sphere $LS^{m-1}$ is the set of points $\uz = \ux + i\uy\in\mC^m$ for which $S_{\ux,\uy} \subset \mS^{m-1}$ or equivalently

\[
LS^{m-1} = \{e^{i\theta} \uom \mid \uom\in\mS^{m-1}, \theta \in [0,\pi)\}.
\]
\end{definition}
Since monogenic functions $f(\ux)$ on $B(0,1)$ admit a holomorphic extension $f(\uz)$ in the Lie ball $LB(0,1)$ (see e.g. \cite{Hua, Mor,holomorphic}), we will be interested in the following space of holomorphic functions.

\begin{definition}
Let $\mathcal{OL}^2(LB(0,1))$ be the right $\mC_m$-module consisting of holomorphic functions $f:LB(0,1)\to\mC_m$ whose boundary value $f(e^{i\theta}\uom)$ belongs to $\mathcal{L}^2 (LS^{m-1})$, i.e. 

\[
\left[\int_{\mS^{m-1}} \int_0^\pi f^{\dagger}(e^{i\theta}\uom)f(e^{i\theta}\uom) d\theta dS(\uom)\right]_0<\infty.
\]
\end{definition}
We can equip $\mathcal{OL}^2(LB(0,1))$ with the following inner product 

\[
\langle f,g\rangle_{\mathcal{OL}^2(LB(0,1))} = \int_{\mS^{m-1}}\int_{0}^{\pi} f^{\dagger}(e^{i\theta}\uom)g(e^{i\theta}\uom)d\theta dS(\uom)
\]
For ease of notation we will denote this by $\langle \cdot,\cdot\rangle_{\mathcal{OL}^2}$.

\section{The monogenic Hua-Radon transform}\label{Section::transform}
\setcounter{equation}{0}
In this section, we will define the monogenic Hua-Radon transform and complete some proofs of Section 6 of \cite{radonlie}. Let us first remind the reader of the functions that we will be working with.\\
Let $\uta = \ut + i\us\in\mC_m$ with $\ut,\us\in\mS^{m-1}$ and $\ut\perp\us$, i.e. $\langle\ut,\us\rangle = 0$. We first define the following functions:

\begin{align*}
\psi_{\uta,2s,k}(z) &= \uta \langle\uz,\uta\rangle^{s+k}\langle\uz,\utd\rangle^{s}\\
\psi_{\uta,2s+1,k}(z) &= \utd\uta \langle\uz,\uta\rangle^{s+k+1}\langle\uz,\utd\rangle^{s}
\end{align*}
for $s,k\in\mN = \{0,1,2,\ldots\}$. Note that for each $\alpha,k\in\mN$ we have that $\psi_{\uta,\alpha,k}(z)$ is homogeneous of degree $\alpha+k$ in $\uz$.

The functions $\psi_{\uta,\alpha,k}(z)$ admit the following properties shown in \cite{radonlie}.

\begin{proposition}\label{recurrence psi}
The functions $\psi_{\uta,0,k}(\uz) = \uta \langle\uz,\uta\rangle^{k}$ are monogenic for each $k\in\mN$. Moreover for each $s,k\in\mN$ we have

\begin{align*}
\upz\psi_{\uta,2s+1,k}(z) &= 4(s+k+1) \psi_{\uta,2s,k}(z)\\
\upz\psi_{\uta,2s+2,k}(z) &= (s+1) \psi_{\uta,2s+1,k}(z).
\end{align*}
\end{proposition}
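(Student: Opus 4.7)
The plan is to differentiate the $\psi$'s directly using the Leibniz rule together with the elementary computations $\upz\langle\uz,\uta\rangle=\uta$ and $\upz\langle\uz,\utd\rangle=\utd$, and then invoke Lemma \ref{taulemma} to collapse the resulting Clifford products. The one subtlety to track is non-commutativity: the scalar factors $\langle\uz,\uta\rangle$ and $\langle\uz,\utd\rangle$ commute with every Clifford element, but a constant Clifford factor $A$ sitting to the left of a function does not commute with the $e_j$ appearing in $\upz$, so a short auxiliary identity has to be derived first.

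Writing $\uta=\sum_j\tau_j e_j$ and $\utd=\sum_j\tau_j^{\dagger}e_j$ with $\tau_j,\tau_j^{\dagger}\in\mC$, and using that complex scalars slide past Clifford elements, one checks immediately that for every constant $A\in\mC_m$,
\[
\sum_j e_j A\,\tau_j = \uta A,\qquad \sum_j e_j A\,\tau_j^{\dagger}=\utd A.
\]
Combined with $\partial_{z_j}\langle\uz,\uta\rangle=\tau_j$ and $\partial_{z_j}\langle\uz,\utd\rangle=\tau_j^{\dagger}$, this yields the master formula
\[
\upz\bigl(A\,\langle\uz,\uta\rangle^{a}\langle\uz,\utd\rangle^{b}\bigr)
= a\langle\uz,\uta\rangle^{a-1}\langle\uz,\utd\rangle^{b}\,(\uta A)
+ b\langle\uz,\uta\rangle^{a}\langle\uz,\utd\rangle^{b-1}\,(\utd A).
\]

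All three claims of the proposition reduce to the same schema: choose $A$, read off $a,b$, and use Lemma \ref{taulemma} to evaluate the factors $\uta A$ and $\utd A$. For $\psi_{\uta,0,k}$ take $A=\uta$ and $b=0$; then $\uta A=\uta^2=0$ by (ii), giving monogenicity. For $\psi_{\uta,2s+1,k}$ take $A=\utd\uta$; the $b$-term vanishes because $\utd A=(\utd)^2\uta=0$, while the $a$-term uses $\uta A=\uta\utd\uta=4\uta$ from (i), producing $4(s+k+1)\psi_{\uta,2s,k}$. For $\psi_{\uta,2s+2,k}$ take $A=\uta$; the $a$-term now dies via $\uta^2=0$, while the $b$-term contracts to $(s+1)\utd\uta\,\langle\uz,\uta\rangle^{s+k+1}\langle\uz,\utd\rangle^{s}=(s+1)\psi_{\uta,2s+1,k}$.

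The main obstacle is nothing more than careful bookkeeping of left-multiplication; once the master Leibniz identity above is set up, the proposition is essentially an immediate corollary of Lemma \ref{taulemma}, with identities (i) and (ii) providing the two non-trivial Clifford simplifications and part (iii) playing no role here.
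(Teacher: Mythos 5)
Your proof is correct and self-contained. Note that the paper itself does not prove Proposition \ref{recurrence psi}; it simply cites \cite{radonlie} for it, so there is no in-text proof to compare against. Your approach — setting up the Leibniz-type identity $\upz\bigl(A\,\langle\uz,\uta\rangle^{a}\langle\uz,\utd\rangle^{b}\bigr) = a\langle\uz,\uta\rangle^{a-1}\langle\uz,\utd\rangle^{b}\,\uta A + b\langle\uz,\uta\rangle^{a}\langle\uz,\utd\rangle^{b-1}\,\utd A$ for a constant Clifford element $A$ and then specializing $A$ to $\uta$ or $\utd\uta$ so that Lemma \ref{taulemma}(i)--(ii) kill or collapse the factors — is the natural direct computation, and the bookkeeping of left-multiplication is handled correctly (the scalars $\tau_j,\tau_j^\dagger,\langle\uz,\uta\rangle,\langle\uz,\utd\rangle$ all commute with Clifford elements, so the regrouping $\sum_j e_j A\,\tau_j = \uta A$ is legitimate). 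All three cases then fall out exactly as stated, with only (i) and (ii) of Lemma \ref{taulemma} needed, as you observe.
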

We can extend these functions to 

\begin{align*}
\psi_{\uta,2s,k}^j(z) &= \uz^j M[\psi_{\uta,2s,k}](z)\\
\psi_{\uta,2s+1,k}^j(z) &= \uz^j M[\psi_{\uta,2s+1,k}](z)
\end{align*}
where $j,s,k\in\mN$ and $M[\cdot]$ stands for the monogenic projection which can be defined as follows. For each $k$-homogeneous polynomial $P_k(z)$ there exist harmonic polynomials $H_{k-2\ell}(z)\in\cH(\Omega)$ of degree $k-2\ell$ for $\ell = 0,\ldots,\left \lfloor \frac{k}{2}\right \rfloor$ such that

\begin{equation}\label{fischer harmon}
P_k(z) = \sum_{\ell=0}^{\left \lfloor \frac{k}{2}\right \rfloor} (-\uz^2)^{\ell} H_{k-2\ell}(z),
\end{equation}
see e.g. \cite{Groen}. Using (\ref{monogenic projection harmonic}) we can refine (\ref{fischer harmon}) to

\begin{equation}\label{fischer mono}
P_k(z) = \sum_{j=0}^{k} \uz^{j} M_{k-j}(z).
\end{equation}
The monogenic projection is now defined as $M[P_k](\uz) = M_k(\uz)$.
\begin{definition}
The right $\mC_m$-module generated by $\{\psi_{\uta,\alpha,k}^j \mid k\in\mN\}$ will be denoted by $\mathfrak{M}^{j,\alpha}(\uta)$.
\end{definition}
\begin{definition}
The monogenic Hua-Radon transform $\cM_{\uta,j}$ is now defined as the orthogonal projection of $\mathcal{OL}^2(LB(0,1))$ onto the orthogonal direct sum $\bigoplus_{\alpha\in\mN}\mathfrak{M}^{j,\alpha}(\uta)$ (see \cite{radonlie}).
\end{definition}
The aim of Sections \ref{Section::transform} and \ref{Section::kernel} will be to write the monogenic Hua-Radon transform as an integral transform
\[
\cM_{\uta,j}[f](\uz) = \int_{0}^{\pi}\int_{\mS^{m-1}} K^j(\uz,e^{-i\theta} \uom) f(e^{-i\theta}\uom) dS(\uom)d\theta
\]
with respect to a certain reproducing kernel $K^j$ that is to be determined.

In order to determine the monogenic projection of $\psi_{\uta,\alpha,k}$, we will first compute its harmonic projection.
\begin{remark}\label{remark::dimension}
Note that if we are working in the 2-dimensional case $\psi_{\uta,\alpha,k}^j = 0$ for all $j,k$ and $\alpha\neq0,1$. Indeed if $m = 2$, then $\uta = e^{i\varphi} (e_1\pm ie_2)$ for some $\varphi\in [0,2\pi)$. This implies
\begin{align*}
\psi_{\uta,2s,k}(z) &= \uta e^{i\varphi k} (z_1 \pm iz_2)^{s+k} (-z_1\pm iz_2)^s = (-1)^s(-\uz^2)^{s} \uta  e^{i\varphi k} (z_1 \pm iz_2)^{k}, \\
\psi_{\uta,2s+1,k}(z) &= \utd\uta e^{i\varphi (k+1)} (z_1 \pm iz_2)^{s+k+1} (-z_1\pm iz_2)^s = (-1)^s(-\uz^2)^{s} \utd\uta  e^{i\varphi (k+1)} (z_1 \pm iz_2)^{k+1}.
\end{align*}
We can now see that $M[\psi_{\uta,2s,k}](\uz) = 0 = M[\psi_{\uta,2s+1,k}](\uz)$ whenever $s\neq 0$. Hence from now on we will assume $m\geq 3$.
\end{remark}
We can project $P_k$ in (\ref{fischer harmon}) onto each of its harmonic components $H_{k-2j}$ using the following projection operator (see e.g. \cite{projection} and \cite{Proj2}).

\begin{proposition}\label{Projection on harmonic}
The projection operator of a $k$-homogeneous polynomial onto its harmonic component of degree $k-2\ell$ is given by the following operator

\[
\sum_{j=0}^{\lfloor \frac{k}{2}\rfloor-\ell} \alpha_j (-\uz^2)^{j}\Delta_{\uz}^{j+\ell}
\]
where

\[
\alpha_j = \frac{(-1)^j(\frac{m}{2}+k-2\ell-1)}{4^{j+\ell}j!\ell!}\frac{\Gamma(\frac{m}{2}+k-2\ell-j-1)}{\Gamma(\frac{m}{2}+k-\ell)}
\]
\end{proposition}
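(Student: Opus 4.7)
The plan is to test the proposed operator $T:=\sum_{j=0}^{\lfloor k/2\rfloor-\ell}\alpha_j(-\uz^2)^j\Delta_{\uz}^{j+\ell}$ against each summand of the Fischer decomposition~(\ref{fischer harmon}) and verify that it acts as the identity on $(-\uz^2)^{\ell}H_{k-2\ell}$ while annihilating $(-\uz^2)^qH_{k-2q}$ for $q\neq\ell$. Since both $(-\uz^2)^j$ and $\Delta_{\uz}^{j+\ell}$ are right $\mC_m$-linear, it suffices to perform this check on a generic homogeneous harmonic component.

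The key preliminary I would establish is the closed-form action of $\Delta_{\uz}^{p}$ on $(-\uz^2)^{q}H_n$. Using that $-\uz^2=\sum_{j=1}^m z_j^2$ is a scalar with $\Delta_{\uz}(-\uz^2)=2m$, together with the commutator identity
\[
[\Delta_{\uz},-\uz^2] \;=\; 2m + 4\sum_{j=1}^m z_j\partial_{z_j},
\]
a short induction on $p$ yields
\[
\Delta_{\uz}^{p}\bigl[(-\uz^2)^q H_n\bigr] \;=\; 4^p\,\frac{q!}{(q-p)!}\,\frac{\Gamma(q+n+m/2)}{\Gamma(q-p+n+m/2)}\,(-\uz^2)^{q-p}H_n
\]
for $p\leq q$, and zero when $p>q$. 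In particular, when $q<\ell$ every term of $T[(-\uz^2)^qH_{k-2q}]$ vanishes automatically because $\Delta_{\uz}^{j+\ell}$ over-differentiates the polynomial.

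Plugging this identity into $T[(-\uz^2)^qH_{k-2q}]$ for $q\geq\ell$ factors out $(-\uz^2)^{q-\ell}H_{k-2q}$, and the assertion reduces to the scalar identity
\[
\sum_{j=0}^{q-\ell}\frac{(-1)^j\,\Gamma(a-j)}{j!\,(q-\ell-j)!\,\Gamma(a+1-(q-\ell)-j)} \;=\; \frac{\delta_{q,\ell}}{a},\qquad a=\tfrac{m}{2}+k-2\ell-1.
\]
For $q=\ell$ only the $j=0$ term survives; the shape of the prefactor in $\alpha_j$ has been chosen precisely so that the resulting coefficient of $H_{k-2\ell}$ equals $1$.

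The only genuinely non-routine step, and hence the main obstacle, is the vanishing of the sum for $q>\ell$. It turns out to be elementary: writing
\[
\frac{\Gamma(a-j)}{\Gamma(a-j-(q-\ell)+1)} \;=\; \prod_{i=1}^{q-\ell-1}(a-j-i),
\]
one recognises a polynomial in $j$ of degree $q-\ell-1$, which is strictly less than $q-\ell$. Since the $(q-\ell)$-th order finite difference $\sum_{j=0}^{q-\ell}(-1)^j\binom{q-\ell}{j}(\,\cdot\,)$ annihilates every polynomial of strictly smaller degree, the sum vanishes. This completes the identification of $T$ with the projection onto the $H_{k-2\ell}$-component of~(\ref{fischer harmon}).
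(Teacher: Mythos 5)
Your proof is correct. Note first that the paper does not actually prove Proposition~\ref{Projection on harmonic}: it is stated with a citation to Ben~Sa\"id--\O rsted and Sobolev, so you are supplying a self-contained argument where the authors defer to the literature. Your strategy is the natural one and every step checks out: the commutator identity $[\Delta_{\uz},-\uz^2]=2m+4\sum_j z_j\partial_{z_j}$ induces the closed form
\[
\Delta_{\uz}^{p}\bigl[(-\uz^2)^q H_n\bigr]=4^{p}\,\frac{q!}{(q-p)!}\,\frac{\Gamma(q+n+\tfrac m2)}{\Gamma(q-p+n+\tfrac m2)}\,(-\uz^2)^{q-p}H_n,
\]
whose $p=1$ case $4q(q+n+\tfrac m2-1)$ is the standard factor, and the vanishing for $p>q$ handles $q<\ell$. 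For $q\geq\ell$ the reduction to the scalar sum with $a=\tfrac m2+k-2\ell-1$ is exact, the $q=\ell$ case gives $\Gamma(k-\ell+\tfrac m2)/\Gamma(\tfrac m2+k-\ell)=1$ as required, and your finite-difference observation -- that $\Gamma(a-j)/\Gamma(a-j-(q-\ell)+1)$ is a polynomial in $j$ of degree $q-\ell-1$, so the $(q-\ell)$-th order difference kills it -- is a clean, elementary way to dispatch the orthogonality. No gaps; this is a nice complement to the paper, which treats the statement as a black box.
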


The projection operator in Proposition \ref{Projection on harmonic} projects onto the harmonic components, but we need to project onto the monogenic components. Hence we will rewrite the previous operator using the Dirac operator and compose it with the projection onto its monogenic component (\ref{monogenic projection harmonic}):

\begin{proposition}\label{Projection on monogenic}
The projection operator of a $k$-homogeneous polynomial onto its monogenic component of degree $k-2l$ is given by the following operator

\[
\mathrm{proj}_{2\ell}^k =\sum_{j=0}^{k-2\ell} \beta_{j,2\ell} \uz^j \upz^{j+2\ell}
\]
where

\begin{align*}
\beta_{2j,2\ell} &= \left(\frac{-1}{4}\right)^{j+\ell}\frac{\Gamma(\frac{m}{2}+k-2\ell-j)}{j!\ell! \Gamma(\frac{m}{2}+k-\ell)},\\
\beta_{2j+1,2\ell} &= \left(\frac{-1}{4}\right)^{j+\ell}\frac{\Gamma(\frac{m}{2}+k-2\ell-j-1)}{2j!\ell! \Gamma(\frac{m}{2}+k-\ell)}.
\end{align*}
\end{proposition}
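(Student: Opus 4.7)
The plan is to realize $\mathrm{proj}_{2\ell}^k$ as the composition of two maps already at our disposal. Let $\pi^h$ denote the harmonic projector from Proposition~\ref{Projection on harmonic} that sends a $k$-homogeneous polynomial to its degree $k-2\ell$ harmonic summand $H_{k-2\ell}$. Applying formula~(\ref{monogenic projection harmonic}) with $k$ replaced by $k-2\ell$ then extracts from $H_{k-2\ell}$ its leading monogenic $M_{k-2\ell}$ via the one-step operator $1+c\,\uz\,\upz$, where $c=\frac{1}{2(k-2\ell)+m-2}$. By uniqueness of the Fischer decomposition~(\ref{fischer mono}), the composite $(1+c\,\uz\,\upz)\circ\pi^h$ singles out precisely the degree $k-2\ell$ monogenic component of $P_k$, so it coincides with $\mathrm{proj}_{2\ell}^k$; hence the only task is to expand this composite into the stated explicit form.

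First I would rewrite $\pi^h$ purely in terms of $\uz$ and $\upz$: the identities $\Delta_{\uz}=-\upz^2$ and $\uz^2=-\sum z_j^2$ turn $(-\uz^2)^j\,\Delta_{\uz}^{j+\ell}$ into $(-1)^\ell\,\uz^{2j}\,\upz^{2j+2\ell}$, producing the even-$\uz$-power skeleton of the final operator. The remaining piece $c\,\uz\,\upz\cdot\pi^h$ must supply the odd-power terms, and the main computational step is to commute $\upz$ past $\uz^{2j}$. The required identity is
\[
\upz\,\uz^{2j}\;=\;\uz^{2j}\,\upz\;-\;2j\,\uz^{2j-1},
\]
which follows from $\upz(r^{2s})=2s\,\uz\,r^{2s-2}$ (with $r^2=\sum z_j^2$) together with $\uz^{2j}=(-1)^j r^{2j}$.

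Applying this commutation splits $c\,\uz\,\upz\cdot\pi^h$ into (i) odd-power terms proportional to $\uz^{2j+1}\upz^{2j+2\ell+1}$ and (ii) a correction of even type proportional to $\uz^{2j}\upz^{2j+2\ell}$ that merges with the even skeleton. Reading off the coefficient of $\uz^{2j+1}\upz^{2j+2\ell+1}$ and using the simplification $c\bigl(\tfrac{m}{2}+k-2\ell-1\bigr)=\tfrac12$ produces $\beta_{2j+1,2\ell}$ directly. For the even coefficient, the overall factor $(1-2cj)$ equals $\frac{m/2+k-2\ell-j-1}{m/2+k-2\ell-1}$, which cancels the $m/2+k-2\ell-1$ appearing in $\alpha_j$ and then shifts the $\Gamma$-argument via $z\,\Gamma(z)=\Gamma(z+1)$ to deliver $\beta_{2j,2\ell}$.

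The only delicate step is the operator commutation $\upz\,\uz^{2j}=\uz^{2j}\upz-2j\,\uz^{2j-1}$; everything else is careful bookkeeping with signs, factorials, and the Gamma functional equation. The overall strategy is essentially forced by the two Fischer decompositions~(\ref{fischer harmon}) and~(\ref{fischer mono}), so the real work is simply making the composite operator look like the single unified sum appearing in the statement.
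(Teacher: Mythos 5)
Your proposal is correct and matches the paper's own proof: both realize $\mathrm{proj}_{2\ell}^k$ as the composite of the harmonic projector from Proposition~\ref{Projection on harmonic} with the one-step operator $1+\frac{1}{2(k-2\ell)+m-2}\uz\upz$ from~(\ref{monogenic projection harmonic}), convert $(-\uz^2)^j\Delta_{\uz}^{j+\ell}$ to $(-1)^\ell\uz^{2j}\upz^{2(j+\ell)}$, and invoke the same commutation $\upz\,\uz^{2j}=\uz^{2j}\upz-2j\,\uz^{2j-1}$ to split into even and odd powers. You spell out the coefficient bookkeeping (the $c(\tfrac m2+k-2\ell-1)=\tfrac12$ and $(1-2cj)$ simplifications) a little more explicitly than the paper does, but the argument is the same.
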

\begin{proof}
Using the projection operator of a harmonic polynomial onto its monogenic component, we get the following operator projecting a $k$-homogeneous polynomial onto its monogenic component of degree $k-2\ell$

\[
\mathrm{proj}_{2\ell}^k = \left(1+\frac{1}{2(k-2\ell)+m-2}\uz\upz\right)\sum_{j=0}^{\lfloor \frac{k}{2}\rfloor-\ell} \alpha_j (-1)^\ell  \uz^{2j}\upz^{2(j+\ell)}
\]
Now observe that for any function $f$ we have

\[
\upz\left[ \uz^{2j}f\right] = -2j\uz^{2j-1}f + \uz^{2j}\upz[f]
\]
as $\uz^{2j}$ is a scalar and hence will commute with all $e_j$ for $j=1,\ldots,m$. Consequently, we can rewrite the projection operator as follows:

\begin{align*}
\mathrm{proj}_{2\ell}^k =& \left(1+\frac{1}{2(k-2\ell)+m-2}\uz\upz\right)\sum_{j=0}^{\lfloor \frac{k}{2}\rfloor-\ell} \alpha_j (-1)^\ell  \uz^{2j}\upz^{2(j+\ell)} \\
=&\sum_{j=0}^{\lfloor \frac{k}{2}\rfloor-\ell} \alpha_j (-1)^l  \uz^{2j}\upz^{2(j+\ell)}+\frac{1}{2(k-2\ell)+m-2}\uz\sum_{j=0}^{\lfloor \frac{k}{2}\rfloor-\ell} \alpha_j (-1)^\ell  \upz\uz^{2j}\upx^{2(j+\ell)}\\
=&\sum_{j=0}^{k-2\ell} \beta_{j,2\ell} \uz^j \upz^{j+2\ell}
\end{align*}
with the coefficients $\beta_{j,2l}$ as in the formulation of the Proposition.
\end{proof}

We can now use Proposition \ref{Projection on monogenic} to calculate the monogenic projection of $\psi_{\uta, \alpha,k}$. The decomposition was already known in \cite{radonlie}, but the explicit expression for the constants was not. 

\begin{lemma}\label{Lemma Fisch decomp M[psi]}
For suitable constants $\mu_{l,\alpha,k}$, $l=1,\ldots,\alpha$ we have

\[
M[\psi_{\uta,\alpha,k}](z) = \sum_{j=0}^{\alpha}\mu_{j, \alpha,k} \uz^j\psi_{\uta,\alpha-j,k}(z).
\]
where

\begin{align*}
\mu_{2j,2s,k} &= (-1)^{j}\frac{\Gamma(\frac{m}{2}+2s+k-j)}{j! \Gamma(\frac{m}{2}+2s+k)}\frac{\Gamma(s+1)}{\Gamma(s-j+1)} \frac{\Gamma(s+k+1)}{\Gamma(s+k-j+1)} & &j=0,\ldots,s,\\
\mu_{2j+1,2s,k} &= (-1)^{j}\frac{\Gamma(\frac{m}{2}+2s+k-j-1)}{2j! \Gamma(\frac{m}{2}+2s+k)}\frac{\Gamma(s+1)}{\Gamma(s-j)} \frac{\Gamma(s+k+1)}{\Gamma(s+k-j+1)} & &j=0,\ldots,s-1,\\
\mu_{2j,2s+1,k} &= (-1)^{j}\frac{\Gamma(\frac{m}{2}+2s+k+1-j)}{j! \Gamma(\frac{m}{2}+2s+k+1)}\frac{\Gamma(s+1)}{\Gamma(s-j+1)} \frac{\Gamma(s+k+2)}{\Gamma(s+k-j+2)} & & j=0,\ldots,s,\\
\mu_{2j+1, 2s+1,k} &=2(-1)^{j}\frac{\Gamma(\frac{m}{2}+2s+k-j)}{j! \Gamma(\frac{m}{2}+2s+k+1)}\frac{\Gamma(s+1)}{\Gamma(s+1-j)} \frac{\Gamma(s+k+2)}{\Gamma(s+k-j+1)}  & &j=0,\ldots,s.
\end{align*}
\end{lemma}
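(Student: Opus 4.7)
The plan is to apply the projection operator from Proposition \ref{Projection on monogenic} directly to $\psi_{\uta,\alpha,k}$. Since $\psi_{\uta,\alpha,k}$ is homogeneous of degree $\alpha+k$, one uses $\mathrm{proj}_{0}^{\alpha+k}$ (that is, $\ell=0$ with the polynomial degree $k$ in the proposition replaced by $\alpha+k$), which gives
\[
M[\psi_{\uta,\alpha,k}](\uz) \;=\; \sum_{j=0}^{\alpha+k} \beta_{j,0}\,\uz^{\,j}\,\upz^{\,j}\,\psi_{\uta,\alpha,k}(\uz),
\]
with the constants $\beta_{j,0}$ read off from Proposition \ref{Projection on monogenic}.

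The second step is to exploit the recurrences of Proposition \ref{recurrence psi}. Splitting on the parity of the exponent and iterating the two identities
\[
\upz\psi_{\uta,2s+1,k} = 4(s+k+1)\,\psi_{\uta,2s,k}, \qquad \upz\psi_{\uta,2s+2,k}=(s+1)\,\psi_{\uta,2s+1,k},
\]
one obtains an identity of the form $\upz^{\,j}\psi_{\uta,\alpha,k}=c_{j,\alpha,k}\,\psi_{\uta,\alpha-j,k}$ for an explicit scalar $c_{j,\alpha,k}$, with the crucial observation that $c_{j,\alpha,k}=0$ for $j>\alpha$ because $\psi_{\uta,0,k}$ is monogenic by the same proposition. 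Consequently the sum in the projection formula truncates at $j=\alpha$, yielding
\[
M[\psi_{\uta,\alpha,k}](\uz) \;=\; \sum_{j=0}^{\alpha} \beta_{j,0}\,c_{j,\alpha,k}\,\uz^{\,j}\,\psi_{\uta,\alpha-j,k}(\uz),
\]
which is already in the claimed shape with $\mu_{j,\alpha,k}=\beta_{j,0}\,c_{j,\alpha,k}$.

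All that remains is to compute $c_{j,\alpha,k}$ explicitly. By telescoping the two recurrences, in the case $\alpha=2s$ one finds $c_{2j,2s,k}=4^{j}\frac{\Gamma(s+1)}{\Gamma(s-j+1)}\frac{\Gamma(s+k+1)}{\Gamma(s+k-j+1)}$ and $c_{2j+1,2s,k}=4^{j}\frac{\Gamma(s+1)}{\Gamma(s-j)}\frac{\Gamma(s+k+1)}{\Gamma(s+k-j+1)}$, and in the case $\alpha=2s+1$ the analogous formulas carry the extra factor $(s+k+1)$ coming from the very first application of the first recurrence. Multiplying by the corresponding $\beta_{j,0}$ the powers of $4$ cancel with the factors $(-1/4)^{j}$, the factor $(s+k+1)$ produced in the odd-$\alpha$ cases combines with $\Gamma(s+k+1)$ to give $\Gamma(s+k+2)$, and the four formulas announced in the statement fall out directly.

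The main obstacle is purely bookkeeping: one must treat the four parity cases (even or odd $j$ combined with even or odd $\alpha$) in parallel, track the two different recurrence constants consistently, and rewrite the resulting products of Gamma functions so that the natural upper limits of summation ($j\le s$ or $j\le s-1$) are automatically enforced by the poles of $1/\Gamma(s-j+1)$ or $1/\Gamma(s-j)$. No new analytic ingredient beyond Propositions \ref{recurrence psi} and \ref{Projection on monogenic} is needed.
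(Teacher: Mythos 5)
Your proposal is correct and follows essentially the same route as the paper: apply $\mathrm{proj}_0^{\alpha+k}$ from Proposition \ref{Projection on monogenic} to $\psi_{\uta,\alpha,k}$, then iterate the recurrences of Proposition \ref{recurrence psi} to write $\upz^j\psi_{\uta,\alpha,k}$ as an explicit scalar multiple of $\psi_{\uta,\alpha-j,k}$ (vanishing for $j>\alpha$), and finally multiply by $\beta_{j,0}$ to identify $\mu_{j,\alpha,k}$. The paper carries out the even case $\alpha=2s$ in full and asserts the odd case is analogous, which is exactly what you describe.
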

\begin{proof}
Using Proposition \ref{Projection on monogenic} we get

\begin{align*}
M[\psi_{\uta,\alpha,k}](z) &= \text{proj}_{0}^{\alpha+k}(\psi_{\uta,\alpha,k})(z)\\
&=\sum_{j=0}^{\alpha + k} \beta_{j,0} \uz^j \upz^j \psi_{\uta,\alpha,k}(z)\\
\end{align*}
where 

\begin{align*}
\beta_{2j,0} &= \left(\frac{-1}{4}\right)^{j}\frac{\Gamma(\frac{m}{2}+\alpha+k-j)}{j! \Gamma(\frac{m}{2}+\alpha+k)}\\
\beta_{2j+1,0}&=\left(\frac{-1}{4}\right)^{j}\frac{\Gamma(\frac{m}{2}+\alpha+k-j-1)}{2j! \Gamma(\frac{m}{2}+\alpha+k)}.	
\end{align*}
If $\alpha=2s$ is even, then using Proposition \ref{recurrence psi} we get

\begin{align*}
\upz^{2j}\psi_{\uta,2s,k}(z) &= 4^j \frac{\Gamma(s+1)}{\Gamma(s-j+1)} \frac{\Gamma(s+k+1)}{\Gamma(s+k-j+1)}\psi_{\uta,2s-2j,k}(z)\\
\upz^{2j+1}\psi_{\uta,2s,k}(z) &= 4^j \frac{\Gamma(s+1)}{\Gamma(s-j)} \frac{\Gamma(s+k+1)}{\Gamma(s+k-j+1)}\psi_{\uta,2s-2j-1,k}(z)
\end{align*} 
and thus the sum reduces to

\[
M[\psi_{\uta,2s,k}](z) = \psi_{\uta,2s,k}(z) + \mu_{1,2s,l}\uz\psi_{\uta,2s-1,k}(z)+\ldots+\mu_{2s,2s,k}\uz^{2s}\psi_{\uta,0,k}(z)
\]
with
\begin{align*}
\mu_{2j, 2s, k} &= \beta_{2j,0}4^j \frac{\Gamma(s+1)}{\Gamma(s-j+1)} \frac{\Gamma(s+k+1)}{\Gamma(s+k-j+1)}\\
&=(-1)^{j}\frac{\Gamma(\frac{m}{2}+2s+k-j)}{j! \Gamma(\frac{m}{2}+2s+k)}\frac{\Gamma(s+1)}{\Gamma(s-j+1)} \frac{\Gamma(s+k+1)}{\Gamma(s+k-j+1)}\\
\mu_{2j+1, 2s, k} &= \beta_{2j+1,0}4^j \frac{\Gamma(s+1)}{\Gamma(s-j)} \frac{\Gamma(s+k+1)}{\Gamma(s+k-j+1)}\\
&=(-1)^{j}\frac{\Gamma(\frac{m}{2}+2s+k-j-1)}{2j! \Gamma(\frac{m}{2}+2s+k)}\frac{\Gamma(s+1)}{\Gamma(s-j)} \frac{\Gamma(s+k+1)}{\Gamma(s+k-j+1)}.
\end{align*}
The case $\alpha=2s+1$ is done in a similar way.
\end{proof}
\begin{remark}
Lemma \ref{Lemma Fisch decomp M[psi]} also shows that $M[\psi_{\uta,2s,k}] = 0 = M[\psi_{\uta,2s+1,k}]$ if $s\geq 1$ and $m=2$ just as we have shown in Remark \ref{remark::dimension}. This might not be obvious at first, but using Lemma \ref{Lemma Fisch decomp M[psi]} we have
\begin{align}
M[\psi_{\uta,2s,k}] &= \sum_{j=0}^{2s}\mu_{j, 2s,k} \uz^j\psi_{\uta,2s-j,k}(z)\nonumber\\
&= \sum_{j=0}^{s}\mu_{2j, 2s,k} \uz^{2j}\psi_{\uta,2s-2j,k}(z) + \sum_{j=0}^{s-1}\mu_{2j+1, 2s,k} \uz^{2j+1}\psi_{\uta,2s-2j-1,k}(z). \label{equa::m=2}
\end{align}
Using the calculation done in Remark \ref{remark::dimension}, we get
\begin{align*}
\uz^{2j}\psi_{\uta,2s-2j,k}(z) &= \uz^{2j}\uz^{2s-2j} \uta  e^{i\varphi k} (z_1 \pm iz_2)^{k}, \\
\uz^{2j+1}\psi_{\uta,2s-2j-1,k}(z) &= \uz^{2j+1} \uz^{2s-2j-2} \utd\uta  e^{i\varphi (k+1)} (z_1 \pm iz_2)^{k+1}\\
&= \uz^{2s-2} e^{i\varphi (k+1)} \uz(2 \mp 2ie _1 e_2)(z_1 \pm iz_2)^{k+1}\\
&= \uz^{2s-2} e^{i\varphi (k+1)}(2e_1(z_1^2+z_2^2)+2ie_2(z_1^2+z_2^2))(z_1 \pm iz_2)^{k}\\
&= \uz^{2s-2} e^{i\varphi k}\left(-2\uta\uz^2\right)(z_1 \pm iz_2)^{k}\\
&= -2\uz^{2s} \uta  e^{i\varphi k} (z_1 \pm iz_2)^{k}.
\end{align*}
Thus now we can rewrite (\ref{equa::m=2}) as a hypergeometric series:

\begin{align*}
(\ref{equa::m=2}) &= \left(\sum_{j=0}^{s}\mu_{2j, 2s,k} -2 \sum_{j=0}^{s-1}\mu_{2j+1, 2s,k}\right)\uz^{2s} \uta  e^{i\varphi k} (z_1 \pm iz_2)^{k}\\
&=\left({}_2 F_1\left([-s,-s-k],[-2s-k];1\right) \phantom{\frac{s}{2s+k}}\right.\\
&\left.- \frac{s}{2s+k}\phantom{.}_2 F_1\left([-s+1,-s-k],[-2s-k+1];1\right)\right)\uz^{2s} \uta  e^{i\varphi k} (z_1 \pm iz_2)^{k}
\end{align*}
where $\phantom{.}_2 F_1$ denotes the hypergeometric series given by
\[
\phantom{.}_2 F_1\left([a,b],[c];w\right) = \sum_{n=0}^{\infty} \frac{(a)_n(b)_n}{(c)_n}\frac{w^n}{n!}
\]
with $(a)_0 = 1$, $(a)_n = a(a+1)\ldots(a+n-1)$ the Pochhamer symbol.
Now using Chu-Vandermonde identity (see \cite{NIST}), we have
\begin{align*}
{}_2 F_1\left([-s,-s-k],[-2s-k];1\right) &= \dfrac{(-s)_s}{(-2s-k)_s} = \dfrac{s!(s+k)!}{(2s+k)!},\\
{}_2 F_1\left([-s+1,-s-k],[-2s-k+1];1\right) &= \dfrac{(-s+1)_{s-1}}{(-2s-k+1)_{s-1}} = \dfrac{(s-1)!(s+k)!}{(2s+k-1)!},
\end{align*}
which shows that $M[\psi_{\uta,2s,k}] = 0$. The case $\alpha = 2s+1$ is done analogously.
\end{remark}

\section{Technical lemmas}\label{Section::technical}
\setcounter{equation}{0}

In order to determine the kernel of the monogenic Hua-Radon transform in Section \ref{Section::kernel}, we will need the following technical lemmas. The first result was already proven in \cite{Szego}.
\begin{lemma}\label{lem4.7}
Let $\ut,\us\in\mR^m$ be such that $|\ut|=|\us|=1$ and $\langle\ut,\us\rangle = 0$, let $\uta = \ut+i\us\in\mC^m$ and $\uom\in\mS^{m-1}$. Then for $k\neq l$ we have

\[
\int_{\mS^{m-1}}\langle \uom,\utd\rangle^k\langle \uom,\uta\rangle^l dS(\uom)=0
\]

and

\[
\int_{\mS^{m-1}}\langle \uom,\utd\rangle^k\langle \uom,\uta\rangle^k dS(\uom)=(-1)^k 2\pi^{\frac{m}{2}}\frac{\Gamma(k+1)}{\Gamma(k+\frac{m}{2})}.
\]
\end{lemma}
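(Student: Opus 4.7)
The plan is to exploit the rotational invariance of $dS$ on $\mS^{m-1}$ together with a join-parametrization of the sphere. Since $\ut,\us$ form an orthonormal pair in $\mR^m$, there is an orthogonal transformation sending $\ut \mapsto e_1$ and $\us \mapsto e_2$. After this rotation (which preserves $dS$) we have $\uta = e_1 + ie_2$, $\utd = -e_1 + ie_2$, and therefore
\[
\langle\uom,\uta\rangle = \omega_1 + i\omega_2, \qquad \langle\uom,\utd\rangle = -\omega_1 + i\omega_2,
\]
so both factors depend only on the first two coordinates of $\uom$.

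Next I would adapt the parametrization of $\mS^{m-1}$ to the splitting $\mR^m = \mR^2 \oplus \mR^{m-2}$ by writing $\uom = \sin\theta \, \uom_1 + \cos\theta \, \uom_2$ with $\theta \in [0,\pi/2]$, $\uom_1 \in \mS^1 \subset \mR^2$ and $\uom_2 \in \mS^{m-3} \subset \mR^{m-2}$, under which the standard join identity gives
\[
dS(\uom) = \sin\theta \, \cos^{m-3}\theta \, d\theta \, dS(\uom_1) \, dS(\uom_2).
\]
Setting $\uom_1 = (\cos\phi, \sin\phi)$ yields $\langle\uom,\uta\rangle = \sin\theta \, e^{i\phi}$ and $\langle\uom,\utd\rangle = -\sin\theta \, e^{-i\phi}$, so
\[
\langle\uom,\utd\rangle^k \langle\uom,\uta\rangle^l = (-1)^k \sin^{k+l}\theta \cdot e^{i(l-k)\phi}.
\]

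The $\phi$-integral of $e^{i(l-k)\phi}$ over $[0,2\pi)$ vanishes whenever $l \neq k$, which immediately proves the first assertion. For $l=k$ the $\phi$-integral contributes $2\pi$, the $\uom_2$-integral contributes $A_{m-2}$, and the remaining $\theta$-integral is a classical Beta evaluation
\[
\int_0^{\pi/2} \sin^{2k+1}\theta \cos^{m-3}\theta \, d\theta = \tfrac{1}{2} B\bigl(k+1, \tfrac{m-2}{2}\bigr) = \tfrac{1}{2} \cdot \frac{\Gamma(k+1)\Gamma((m-2)/2)}{\Gamma(k+m/2)}.
\]
Substituting $A_{m-2} = 2\pi^{(m-2)/2}/\Gamma((m-2)/2)$, the factors $\Gamma((m-2)/2)$ cancel and the whole expression collapses to $(-1)^k \cdot 2\pi^{m/2}\Gamma(k+1)/\Gamma(k+m/2)$, as claimed.

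There is no real obstacle; the only care needed is in recording the correct Jacobian for the join decomposition of $dS$ and noting that the initial rotation preserves the measure. The entire argument is really a one-variable trigonometric integral disguised by the Clifford setting.
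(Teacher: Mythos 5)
Your proof is correct. Note that the paper does not itself supply a proof of this lemma---it cites \cite{Szego} and moves on---but the companion lemmas in the same section (Lemmas \ref{lem7.7} and \ref{lemma integral xtaux}) are established by the paper's native tools, namely the Fischer inner product identity (\ref{Fisch Inprod}) and Pizzetti's formula (\ref{Pizetti}). Your argument goes a genuinely different and more elementary route: rotate so that $\uta = e_1+ie_2$, split $\mS^{m-1}$ as the join of $\mS^1$ and $\mS^{m-3}$, and reduce everything to a Fourier orthogonality in $\phi$ together with a single Beta integral in $\theta$. I checked the details: the join Jacobian $\sin\theta\cos^{m-3}\theta$ is right for the $2+(m-2)$ split, the identification $\langle\uom,\uta\rangle=\sin\theta\,e^{i\phi}$, $\langle\uom,\utd\rangle=-\sin\theta\,e^{-i\phi}$ is correct, and $2\pi\cdot A_{m-2}\cdot\tfrac12 B\bigl(k+1,\tfrac{m-2}{2}\bigr)$ does collapse to $2\pi^{m/2}\Gamma(k+1)/\Gamma(k+\tfrac m2)$. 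The tradeoff is worth noting: your coordinate computation is transparent and self-contained, but it relies on the integrand being scalar and depending only on $(\omega_1,\omega_2)$; the Pizzetti/Fischer machinery the paper uses for the sibling Lemma \ref{lemma integral xtaux} handles Clifford-valued integrands such as $\uom\,\uta\,\utd\uom$ without coordinates, which is why the authors prefer it once the factors stop being scalar. Also, your parametrization implicitly needs $m\geq 3$ so that $\mS^{m-3}$ is a genuine sphere; this matches the paper's standing assumption from Remark \ref{remark::dimension}, and for $m=2$ the identity can be checked directly on $\mS^1$.
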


We can write the integral in Lemma \ref{lem4.7} as the inner product $A_{m}\langle(-1)^k \langle\ux,\uta\rangle^k,\langle\ux,\uta\rangle^l\rangle_{\mS^{m-1}}$ with

\[
\langle P,Q\rangle_{\mS^{m-1}} = \frac{1}{A_{m}} \int_{\mS^{m-1}} \overline{P(\uom)}Q(\uom)dS(\uom)
\]
where $\overline{P(\uom)}$ is the complex conjugate of $P(\uom)$. It has been proven (see e.g. \cite{Xu}) that

\begin{equation}\label{Fisch Inprod}
2^k \frac{\Gamma\left(k+\frac{m}{2}\right)}{\Gamma\left(\frac{m}{2}\right)}\langle H_k,P_{\ell}\rangle_{\mS^{m-1}} = \left[\overline{H_k(\upx)}P_{\ell}(x)\right]_{x=0}
\end{equation}
where $H_k\in\cH_{k}$, $P_{\ell}$ is an $\ell$-homogeneous polynomial and $H_k(\upx)$ is the operator obtained by substituting $\partial_{x_i}$ for $x_i$ in $H(x)$.  Moreover if $k\neq \ell$ then

\begin{equation}\label{sphere inprod 0}
\langle H_k,H_\ell\rangle_{\mS^{m-1}} = 0
\end{equation}
where $H_k\in\cH_k, H_\ell\in\cH_\ell$. 

\begin{remark}\label{rem::sphere inprod 0 mono}
We can use (\ref{sphere inprod 0}) to get a similar result for monogenic polynomials, namely whenever $k\neq l$

\begin{align*}
\langle M_k,M_\ell\rangle_{\mS^{m-1}} = 0\\
\langle M_k,\ux M_\ell\rangle_{\mS^{m-1}} = 0\\
\langle \ux M_k,\ux M_\ell\rangle_{\mS^{m-1}} = 0,
\end{align*}
where $M_k \in \mathcal{M}_k$ and $M_\ell \in \mathcal{M}_\ell$.
\end{remark}
Now we can easily prove the following:

\begin{lemma}\label{lem7.7}
Let $\ut,\us\in\mathbb{R}^m$ be such that $|\ut|=|\us|=1$ and $\langle\ut,\us\rangle = 0$, let $\uta = \ut+i\us\in\mC^m$ and $\uom\in\mS^{m-1}$. Then for $k\neq \ell+1$ one has

\[
\int_{\mS^{m-1}}\langle \uom,\utd\rangle^k\langle \uom,\uta\rangle^{\ell}\uta\phantom{.}\uom dS(\uom)=0
\]
and

\[
\int_{\mS^{m-1}}\langle \uom,\utd\rangle^{\ell+1}\langle \uom,\uta\rangle^{\ell} \uta\phantom{.}\uom dS(\uom)=(-1)^{\ell} \pi^{\frac{m}{2}}\frac{\Gamma(\ell+2)}{\Gamma(\ell+1+\frac{m}{2})}\uta\phantom{.}\utd.
\]
\end{lemma}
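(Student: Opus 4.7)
The plan is to reduce everything to scalar integrals handled by Lemma~\ref{lem4.7}. Applying (\ref{product vectors}) together with the fact that $\uom^2 = -1$ gives the anticommutation relation $\uta\,\uom = -2\langle\uta,\uom\rangle - \uom\,\uta$. Using this identity inside the integrand splits the integral as
\[
\int_{\mS^{m-1}}\langle\uom,\utd\rangle^k\langle\uom,\uta\rangle^{\ell}\,\uta\,\uom\, dS(\uom) = -2\!\int_{\mS^{m-1}}\!\langle\uom,\utd\rangle^k\langle\uom,\uta\rangle^{\ell+1}\,dS(\uom) \;-\; I\cdot\uta,
\]
where $I := \int_{\mS^{m-1}} \langle\uom,\utd\rangle^k\langle\uom,\uta\rangle^{\ell}\,\uom\, dS(\uom)$. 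The first piece is a scalar integral to which Lemma~\ref{lem4.7} applies directly: it vanishes for $k\neq\ell+1$ and equals $-2\,(-1)^{\ell+1}\,2\pi^{m/2}\Gamma(\ell+2)/\Gamma(\ell+1+m/2)$ when $k=\ell+1$.

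For the vector-valued piece, I would extend $\{\ut,\us\}$ to an orthonormal basis of $\mR^m$. The scalar factor $\langle\uom,\utd\rangle^k\langle\uom,\uta\rangle^{\ell}$ depends only on the projections of $\uom$ onto $\ut$ and $\us$, so by $\mathrm{SO}(m-2)$-invariance on the orthogonal complement, $I$ must lie in $\mC\ut \oplus \mC\us = \mC\uta \oplus \mC\utd$. To compute the coefficients, rewrite $\langle\uom,\ut\rangle = \tfrac{1}{2}\bigl(\langle\uom,\uta\rangle - \langle\uom,\utd\rangle\bigr)$ and $\langle\uom,\us\rangle = -\tfrac{i}{2}\bigl(\langle\uom,\uta\rangle + \langle\uom,\utd\rangle\bigr)$; the $\ut$- and $\us$-components of $I$ then reduce to four applications of Lemma~\ref{lem4.7}. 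These are nonzero only when $k=\ell+1$ (in which case the $\ut$- and $\us$-coefficients recombine to make $I$ proportional to $\utd$) or $k=\ell-1$ (in which case $I$ is proportional to $\uta$, whence $I\cdot\uta \propto \uta^{2} = 0$ by Lemma~\ref{taulemma}(ii)). This already proves the first assertion: for $k\neq \ell+1$, both contributions vanish.

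For the distinguished case $k=\ell+1$, assembling the scalar term and the $I\cdot\uta$ term (with $I \propto \utd$, so $I\cdot\uta$ becomes a multiple of $\utd\uta$) produces
\[
(-1)^{\ell}\pi^{m/2}\,\frac{\Gamma(\ell+2)}{\Gamma(\ell+1+m/2)}\bigl(4 - \utd\uta\bigr).
\]
Finally, Lemma~\ref{taulemma}(iii) converts $4 - \utd\uta$ into $\uta\utd$, yielding the claimed formula. The main obstacle is careful bookkeeping: tracking the complex signs that appear in the expansions of $\ut$ and $\us$ in terms of $\uta, \utd$, and verifying at the end that the $\utd\uta$ produced by $I\cdot\uta$ combines with the scalar $4$ precisely through the anticommutator identity to give $\uta\utd$ rather than $\utd\uta$.
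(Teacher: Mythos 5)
Your proof is correct, and the computation checks out: the anticommutation $\uta\,\uom = -2\langle\uta,\uom\rangle - \uom\,\uta$ splits the integral into a scalar piece and $-I\,\uta$; the $SO(m-2)$-invariance argument correctly forces $I$ into the $\mC$-span of $\uta,\utd$; and since $\uta^2=0$, only the $\utd$-coefficient of $I$ survives after right-multiplication by $\uta$, which is nonvanishing precisely when $k=\ell+1$. The final recombination $4-\utd\uta=\uta\utd$ via Lemma~\ref{taulemma}(iii) is exactly right.

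However, your route differs genuinely from the paper's. The paper writes the integral as a spherical inner product and invokes the Fischer duality (\ref{Fisch Inprod}): since $\langle\ux,\utd\rangle^{\ell}\utd$ is monogenic, one can move a single $\upx$ onto $\langle\ux,\utd\rangle^k$, which strips off the extra $\ux$, picks up a factor $\utd$, and produces $\uta\,\utd$ directly, after which Lemma~\ref{lem4.7} finishes in one step. Your argument instead stays at the level of the Clifford algebra and the geometry of the sphere: the anticommutator decomposes the $\uta\,\uom$ factor, and a symmetry argument handles the leftover vector integral. The paper's method is shorter and avoids the side case $k=\ell-1$ (where $I\propto\uta$ but $I\uta=0$, which you correctly dispose of via $\uta^2=0$). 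Your method is more elementary in that it uses no reproducing-kernel machinery — only the product rule for Clifford vectors, rotational invariance, and Lemma~\ref{lem4.7} — at the cost of four small applications of that lemma instead of one. Both are valid; the paper's is the tighter argument, yours the more self-contained one.
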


\begin{proof}
We have 

\begin{align*}
\int_{\mS^{m-1}}\langle \uom,\utd\rangle^k\langle \uom,\uta\rangle^{\ell}\uta\phantom{.}\uom dS(\uom) &= A_{m}\langle (-1)^{\ell+1}\langle\ux,\utd\rangle^{\ell}\utd\ux,\langle\ux,\utd\rangle^k \rangle_{\mS^{m-1}}.
\end{align*}
Using (\ref{Fisch Inprod}), it is easy to see that

\begin{align*}
\langle (-1)^{\ell+1}\langle\ux,\utd\rangle^{\ell}\utd\ux,\langle\ux,\utd\rangle^k \rangle_{\mS^{m-1}} &= \frac{1}{2\left(k+\frac{m}{2}\right)} \langle (-1)^{\ell+1}\langle\ux,\utd\rangle^{\ell}\utd,\upx\langle\ux,\utd\rangle^k \rangle_{\mS^{m-1}}\\
&=\frac{1}{2\left(k+\frac{m}{2}\right)}\langle (-1)^{\ell+1}\langle\ux,\utd\rangle^{\ell}\utd,k\langle\ux,\utd\rangle^{k-1}\utd \rangle_{\mS^{m-1}}\\
&=\frac{1}{2\left(k+\frac{m}{2}\right)}\langle (-1)^{\ell}\langle\ux,\utd\rangle^{\ell},k\langle\ux,\utd\rangle^{k-1}\rangle_{\mS^{m-1}}\uta\phantom{.}\utd.
\end{align*}
The result now follows from Lemma \ref{lem4.7}.

\end{proof}
\begin{remark}\label{remark uometd}
Since $\utd$ and $\uta$ have the same properties as $-\ut\in\mS^{m-1}$ and $-\ut\perp\us$, we can easily replace $\uta$ by $\utd$. Moreover, we have

\begin{align*}
\int_{\mS^{m-1}}\langle \uom,\utd\rangle^k\langle \uom,\uta\rangle^\ell\uta\phantom{.}\uom dS(\uom) &= -\left(\int_{\mS^{m-1}} (-1)^{k+\ell}\langle \uom,\uta\rangle^k\langle \uom,\utd\rangle^\ell\uom\phantom{.}\utd dS(\uom)\right)^{\dagger}\\
&=\left\{\begin{array}{ll}
0 & \text{for }k \neq \ell+1,\\
(-1)^{\ell} \pi^{\frac{m}{2}}\frac{\Gamma(\ell+2)}{\Gamma(\ell+1+\frac{m}{2})}\uta\phantom{.}\utd & \text{for }k=\ell+1.
\end{array}\right.
\end{align*}

\end{remark}

In order to prove the following lemma, we will need to use Pizzetti's formula (see e.g. \cite{super, pizzetti}), which provides an easy way to calculate integrals over the unit sphere using the Laplacian. If $f$ is a polynomial, then

\begin{equation}\label{Pizetti}
\int_{\mS^{m-1}} f(\uom) \; {d S(\uom)} =  \sum_{k=0}^{\infty}  \frac{2 \pi^{m/2}}{4^{k} k!\Gamma(k+m/2)} (\Delta_{\ux}^k f )(0).
\end{equation}
This will allow us to show the following.

\begin{lemma}\label{lemma integral xtaux}
Let $\ut,\us\in\mR^m$ be such that $|\ut|=|\us|=1$ and $\langle\ut,\us\rangle = 0$, let $\uta = \ut+i\us\in\mC^m$ and $\uom\in\mS^{m-1}$. Then for $k\neq \ell$ we have

\[
\int_{\mS^{m-1}}\langle \uom,\utd\rangle^k\langle \uom,\uta\rangle^\ell\uom\phantom{.}\uta\phantom{.}\utd\uom dS(\uom)=0
\]
and

\begin{equation}\label{vgl:integral xtaux}
\int_{\mS^{m-1}}\langle \uom,\utd\rangle^{k}\langle \uom,\uta\rangle^k\uom\phantom{.}\uta\phantom{.}\utd\uom dS(\uom)=(-1)^{k} \pi^{\frac{m}{2}}\frac{k!}{\Gamma(k+1+\frac{m}{2})}(4 \uta\wedge\utd - m \uta \phantom{.}\utd-2k\utd\uta).
\end{equation}

\end{lemma}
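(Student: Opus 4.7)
The plan is to apply Pizzetti's formula (\ref{Pizetti}) to the polynomial integrand $f(\ux)=\langle\ux,\utd\rangle^k\langle\ux,\uta\rangle^\ell\,\ux\uta\utd\ux$, which is homogeneous of degree $k+\ell+2$. Only the Pizzetti term of order $N:=(k+\ell+2)/2$ contributes (the integral vanishes automatically when $k+\ell$ is odd by parity), so the task reduces to computing the constant $\Delta_{\ux}^{N} f(0)$. To organise the iterated Laplacian, I would first use $\uta\utd = 2+\uta\wedge\utd$ (which follows from Lemma~\ref{taulemma}) to split
\[
\ux\uta\utd\ux = -2|\ux|^2 + \ux B\ux,\qquad B:=\uta\wedge\utd.
\]
The $-2|\ux|^2$-piece contributes $-2\int_{\mS^{m-1}}\langle\uom,\utd\rangle^k\langle\uom,\uta\rangle^\ell\,dS(\uom)$, which Lemma~\ref{lem4.7} evaluates directly (zero unless $k=\ell$).

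For the bivector piece I would expand $\ux B\ux=\sum_{i,j}x_ix_j(e_iBe_j)$, set $G(\ux):=\langle\ux,\utd\rangle^k\langle\ux,\uta\rangle^\ell$, and establish by a short induction on $N$ the identity
\[
\Delta_{\ux}^{N}(G\,x_ix_j)(0) = 2N\,\delta_{ij}\,\Delta_{\ux}^{N-1}G(0) + 4N(N-1)\,\partial_{x_i}\partial_{x_j}\Delta_{\ux}^{N-2}G(0).
\]
Since $\uta^2=(\utd)^2=0$ and $\langle\uta,\utd\rangle=-2$ (Lemma~\ref{taulemma}), repeated application of the Laplacian gives
\[
\Delta_{\ux}^{p}G=(-4)^p\frac{k!\,\ell!}{(k-p)!(\ell-p)!}\langle\ux,\utd\rangle^{k-p}\langle\ux,\uta\rangle^{\ell-p}
\]
for $p\le\min(k,\ell)$ and zero otherwise. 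Summing against $e_iBe_j$ produces two Clifford sums: the first is reduced via the standard identity $\sum_i e_iBe_i=(4-m)B$; the second contracts to a linear combination of $\uta B\uta$, $\utd B\utd$ and $\uta B\utd+\utd B\uta$. From $\uta^2=(\utd)^2=0$ together with $\uta\utd+\utd\uta=4$, one gets $\uta B=-2\uta$ and $\utd B=2\utd$, hence $\uta B\uta=\utd B\utd=0$ and $\uta B\utd+\utd B\uta=-4B$.

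When $k\neq\ell$ (with $k+\ell$ even), the first sum vanishes because $N-1>\min(k,\ell)$, and the only surviving possibility for the second sum (namely $|k-\ell|=2$) is killed by $\uta B\uta=\utd B\utd=0$; combined with the vanishing of the $-2|\ux|^2$-piece this yields $I_{k,\ell}=0$. For $k=\ell$, substitute $\Delta_{\ux}^{N-1}G(0)=(-4)^k(k!)^2$ and the corresponding value of $\partial_{x_i}\partial_{x_j}\Delta_{\ux}^{N-2}G(0)$ into Pizzetti's formula; assembled with the $-2|\ux|^2$-contribution this simplifies to $(-1)^k\pi^{m/2}\frac{k!}{\Gamma(k+1+m/2)}\bigl[-4k-2m+(2k+4-m)B\bigr]$, and rewriting $B=\uta\utd-2$ and $\utd\uta=4-\uta\utd$ identifies the bracket with $4\uta\wedge\utd-m\uta\utd-2k\utd\uta$, as claimed. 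The main technical obstacle is the Clifford-algebraic bookkeeping: organising the two Laplacian contributions so that the surviving terms collapse neatly into the stated combination of $\uta\utd$, $\utd\uta$ and $\uta\wedge\utd$.
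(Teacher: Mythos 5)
Your proof is correct, and the underlying tool is the same as the paper's (Pizzetti's formula applied to the $(k+\ell+2)$-homogeneous integrand), but the way you organise the Clifford and Laplacian bookkeeping is genuinely different. The paper keeps $\ux\,\uta\,\utd\,\ux$ together and establishes a five-term recursion for $\Delta_{\ux}^{j}$ of the full integrand, which it then iterates until only a constant survives. You instead split $\uta\utd = 2 + B$ with $B=\uta\wedge\utd$ at the outset, so that the scalar piece $-2|\ux|^2$ restricts to $-2$ on $\mS^{m-1}$ and is disposed of immediately by Lemma~\ref{lem4.7}, while the bivector piece $\ux B\ux$ is handled coordinatewise via the clean two-term Leibniz-type identity $\Delta_{\ux}^{N}(Gx_ix_j)(0)=2N\delta_{ij}\Delta_{\ux}^{N-1}G(0)+4N(N-1)\partial_{x_i}\partial_{x_j}\Delta_{\ux}^{N-2}G(0)$. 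This buys a noticeably shorter computation: the first term contracts via $\sum_i e_iBe_i=(4-m)B$, and the second collapses once you note $\uta B\uta=\utd B\utd=0$ and $\uta B\utd+\utd B\uta=-4B$ (all following from $\uta^2=(\utd)^2=0$ and $\uta\utd+\utd\uta=4$). Your final algebra $\bigl[-4k-2m+(2k+4-m)B\bigr] = 4\uta\wedge\utd-m\uta\utd-2k\utd\uta$ after substituting $\uta\utd=2+B$ and $\utd\uta=2-B$ checks out, so the result agrees with \eqref{vgl:integral xtaux}. The only thing worth stating explicitly, if you were to write this up, is the proof of the two-term Laplacian identity (a short multi-index Leibniz argument exploiting that $x_ix_j$ is quadratic), since it plays the role that the paper's five-term $\Delta_{\ux}^{j}$ recursion plays there.
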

\begin{proof}
To prove this, we will use Pizzetti's formula. It can be shown, using Lemma \ref{taulemma} and some straightforward calulations, that for $j\leq \min(k,\ell)$ one has

\begin{align*}
\Delta_{\ux}^{j} \left(\langle\ux,\utd\rangle^k\langle \ux,\uta\rangle^\ell\ux\phantom{.}\uta\phantom{.}\utd\ux\right) =& \Delta_{\ux}^j(\langle\ux,\utd\rangle^k\langle \ux,\uta\rangle^\ell)\ux\phantom{.}\uta\phantom{.}\utd\ux\\
&+ 8kj \Delta_{\ux}^{j-1}(\langle\ux,\utd\rangle^{k-1}\langle \ux,\uta\rangle^\ell)\utd\ux\\
&+ 8\ell j \Delta_{\ux}^{j-1}(\langle\ux,\utd\rangle^{k}\langle \ux,\uta\rangle^{\ell-1})\ux\phantom{.}\uta\\
&-4j(j-1)\Delta_{\ux}^{j-1}(\langle\ux,\utd\rangle^{k}\langle \ux,\uta\rangle^{\ell})\utd\uta\\
&+2j\Delta_{\ux}^{j-1}(\langle\ux,\utd\rangle^{k}\langle \ux,\uta\rangle^{\ell})T
\end{align*}
where $T = \sum_{i=1}^m e_i \uta\phantom{.}\utd e_i$. So now it is easy to see that if $k\neq l$, our integral will vanish, whereas if $k=\ell$, we have

\begin{align*}
\Delta_{\ux}^{k} \left(\langle\ux,\utd\rangle^k\langle \ux,\uta\rangle^k\ux\phantom{.}\uta\phantom{.}\utd\ux\right) =& \Delta_{\ux}^k(\langle\ux,\utd\rangle^k\langle \ux,\uta\rangle^k)\ux\phantom{.}\uta\phantom{.}\utd\ux\\
&+ 8k^2 \Delta_{\ux}^{k-1}(\langle\ux,\utd\rangle^{k-1}\langle \ux,\uta\rangle^k)\utd\ux\\
&+ 8k^2 \Delta_{\ux}^{k-1}(\langle\ux,\utd\rangle^{k}\langle \ux,\uta\rangle^{k-1})\ux\phantom{.}\uta\\
&-4k(k-1)\Delta_{\ux}^{k-1}(\langle\ux,\utd\rangle^{k}\langle \ux,\uta\rangle^{k})\utd\uta\\
&+2k\Delta_{\ux}^{k-1}(\langle\ux,\utd\rangle^{k}\langle \ux,\uta\rangle^{k})T\\
=&(-4)^k(k!)^2\ux\phantom{.}\uta\phantom{.}\utd\ux\\
&+8k^2(-4)^{k-1}(k-1)!k!\langle \ux,\uta\rangle \utd\ux\\
&+8k^2(-4)^{k-1}(k-1)!k!\langle \ux,\utd\rangle \ux\phantom{.}\uta\\
&-4k(k-1) (-4)^{k-1}(k!)^2\langle\ux,\utd\rangle\langle \ux,\uta\rangle\utd\uta\\
&+2k(-4)^{k-1}(k!)^2\langle\ux,\utd\rangle^{k}\langle \ux,\uta\rangle)T.
\end{align*}
Hence applying the Laplace operator one more time yields

\begin{align*}
\Delta_{\ux}^{k+1} \left(\langle\ux,\utd\rangle^k\langle \ux,\uta\rangle^k\ux\uta\utd\ux\right) =& 2(-4)^{k}k!(k + 1)!(T-2k\utd\uta).
\end{align*}
Hence after applying Pizzetti's formula we get

\begin{align*}
\int_{\mS^{m-1}}\langle \uom,\utd\rangle^{l+1}\langle \uom,\uta\rangle^l\uom\uta\utd\uom dS(\uom)&=\sum_{j=0}^{\infty} \frac{2\pi^{m/2}}{4^j j!\Gamma(j+m/2)} \Delta_{\ux}^j \left(\langle\ux,\utd\rangle^k\langle \ux,\uta\rangle^k\ux\uta\utd\ux\right)\vert_{\ux=0}\\
&=(-1)^{k} \pi^{\frac{m}{2}}\frac{k!}{\Gamma(k+1+\frac{m}{2})}(T-2k\utd\uta).
\end{align*}
We can now rewrite $T$ to get a neater result. Using (\ref{product vectors}) we get

\begin{align*}
T &= \sum_{i=1}^m e_i \uta\utd e_i = \sum_{i=1}^m e_i (-\langle\uta,\utd\rangle + \uta\wedge\utd) e_i\\
&= -\langle\uta,\utd\rangle(-m) + \sum_{i=1}^m e_i \uta\wedge\utd e_i = m\langle\uta,\utd\rangle + (4-m)\uta\wedge\utd\\
&= 4 \uta\wedge\utd - m(-\langle\uta,\utd\rangle + \uta\wedge\utd) = 4 \uta\wedge\utd - m \uta \phantom{.}\utd
\end{align*}
where we used Lemma 4.3 of \cite{Cnudde}
\end{proof}

\begin{remark}
As $\uta\wedge\utd$ is a bivector and $[\uta\phantom{.}\utd]_0 = [\utd\uta]_0 = 2$, we have that the scalar part of the right-hand side of (\ref{vgl:integral xtaux}) yields

\[
(-1)^{k+1}4 \pi^{\frac{m}{2}}\frac{k!}{\Gamma(k+\frac{m}{2})}.
\]
\end{remark}

\section{The kernel of the monogenic Hua-Radon transform}\label{Section::kernel}
\setcounter{equation}{0}

In order to determine the kernel of the monogenic Hua-Radon transform, we will need the following proposition.

\begin{proposition}\label{prop equalities psij inprod}
The following equalities hold:

\begin{equation}\label{inprod equalities}
\langle\psi_{\uta,\alpha,k}^j,\psi_{\uta,\alpha',k'}^j\rangle_{\mathcal{OL}^2} = \langle M[\psi_{\uta,\alpha,k}],M[\psi_{\uta,\alpha',k'}]\rangle_{\mathcal{OL}^2} = \langle M[\psi_{\uta,\alpha,k}],\psi_{\uta,\alpha',k'}\rangle_{\mathcal{OL}^2}.
\end{equation}
Moreover if $\alpha+k\neq\alpha'+k'$ then the three quantities in (\ref{inprod equalities}) will vanish.
\end{proposition}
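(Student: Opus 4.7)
The plan is to parametrize the Lie sphere by $\uz = e^{i\theta}\uom$ with $\uom\in\mS^{m-1}$ and $\theta\in[0,\pi)$, and to exploit two facts. First, a direct Hermitian-conjugate computation yields $\uz^{\dagger}\uz = -\uom^{2} = 1$, so $(\uz^{j})^{\dagger}\uz^{j}=1$ for every $j\in\mN$. Second, homogeneity makes the integral $\int_{0}^{\pi}e^{in\theta}d\theta$ vanish whenever $n$ is a nonzero even integer. Since $\psi^{j}_{\uta,\alpha,k}=\uz^{j}M[\psi_{\uta,\alpha,k}]$, the pointwise identity
\[
(\psi^{j}_{\uta,\alpha,k})^{\dagger}\psi^{j}_{\uta,\alpha',k'}
= M[\psi_{\uta,\alpha,k}]^{\dagger}(\uz^{j})^{\dagger}\uz^{j}M[\psi_{\uta,\alpha',k'}]
= M[\psi_{\uta,\alpha,k}]^{\dagger}M[\psi_{\uta,\alpha',k'}]
\]
holds on the Lie sphere, and integration gives the first equality in \eqref{inprod equalities}.

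For the second equality, use the monogenic Fischer decomposition \eqref{fischer mono} to write $\psi_{\uta,\alpha',k'} = M[\psi_{\uta,\alpha',k'}] + \sum_{p=1}^{\alpha'+k'}\uz^{p}M'_{(p)}$, with each $M'_{(p)}$ a left-monogenic polynomial of degree $\alpha'+k'-p$. It suffices to verify $\langle M[\psi_{\uta,\alpha,k}],\uz^{p}M'_{(p)}\rangle_{\mathcal{OL}^{2}}=0$ for every $p\geq 1$. By homogeneity this factors as
\[
\biggl(\int_{0}^{\pi}e^{i(\alpha'+k'-\alpha-k)\theta}\,d\theta\biggr)\int_{\mS^{m-1}}M[\psi_{\uta,\alpha,k}]^{\dagger}\,\uom^{p}\,M'_{(p)}(\uom)\,dS(\uom).
\]
For $p$ odd, $\uom^{p}$ is a scalar multiple of $\uom$; the Clifford--Cauchy theorem applied on $B(0,1)$ --- with $M[\psi_{\uta,\alpha,k}]^{\dagger}$ right-monogenic (the $\dagger$-operation swaps left- and right-monogenicity) and $M'_{(p)}$ left-monogenic --- forces $\int_{\mS^{m-1}}M[\psi_{\uta,\alpha,k}]^{\dagger}\uom M'_{(p)}\,dS=0$. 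For $p$ even, $\uom^{p}$ is a pure scalar; expanding in the Clifford basis, the spherical integral reduces to a sum of scalar sphere inner products of harmonic polynomials of degrees $\alpha+k$ and $\alpha'+k'-p$, which vanish by \eqref{sphere inprod 0} unless these degrees coincide. In the exceptional case $p=\alpha'+k'-\alpha-k$ is a nonzero even integer, so the $\theta$-integral vanishes instead.

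Finally, when $\alpha+k\neq\alpha'+k'$ the spherical integral in $\langle M[\psi_{\uta,\alpha,k}],M[\psi_{\uta,\alpha',k'}]\rangle_{\mathcal{OL}^{2}}$ itself vanishes by componentwise application of \eqref{sphere inprod 0} to harmonic polynomials of different degrees, so that quantity is zero, and the two established equalities propagate the vanishing to the remaining inner products. The most delicate step is the $p=1$ term of the Fischer remainder: the harmonic components of $M[\psi_{\uta,\alpha,k}]^{\dagger}$ and $\uom M'_{(1)}$ both have degree $\alpha+k$, so \eqref{sphere inprod 0} alone does not suffice, and the Clifford--Cauchy theorem, rooted in the $\dagger$-duality between left- and right-monogenicity, is what closes this gap.
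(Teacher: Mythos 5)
Your proof is correct, and it reaches the same conclusion by a route that diverges from the paper's in the middle step. The first equality is handled identically (pull out $(\uz^{j})^{\dagger}\uz^{j}=1$ on the Lie sphere), and the final degree-mismatch vanishing also matches the paper's appeal to \eqref{sphere inprod 0}. Where you differ is in establishing the second equality: after the monogenic Fischer decomposition of $\psi_{\uta,\alpha',k'}$, the paper kills the remainder terms uniformly for all $\ell\geq 1$ by invoking the Fischer pairing identity \eqref{Fisch Inprod} to transfer a factor of $\ux$ onto $M_{\alpha'+k'-\ell}$ as $\upx$, which annihilates it; you instead split by parity of $p$, use $\uom^{p}=\pm 1$ plus \eqref{sphere inprod 0} and the vanishing of $\int_{0}^{\pi}e^{ip\theta}d\theta$ for nonzero even $p$, and use the Clifford--Cauchy theorem (right-monogenic times $\uom$ times left-monogenic integrates to zero over $\mS^{m-1}$) for odd $p$. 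Both mechanisms are incarnations of the same underlying adjointness between $\ux$-multiplication and $\upx$-differentiation, so the content is equivalent; your version is arguably more explicit about the delicate degree-matching cases, and in particular the odd-$p$ Stokes argument is exactly the tool that underlies the paper's Remark~\ref{rem::sphere inprod 0 mono} (note that \eqref{sphere inprod 0} alone only yields $\langle M_k,\ux M_\ell\rangle_{\mS^{m-1}}=0$ for $k\neq\ell+1$, and the residual case $k=\ell+1$ requires precisely the Cauchy-theorem orthogonality of $\cM_k$ and $\ux\cM_{k-1}$ that you invoke). One small correction: you do not in fact need the $\theta$-integral as a separate crutch when $p$ is even; if $p$ is even and the relevant degrees coincide then $p=\alpha'+k'-\alpha-k$ is automatically a nonzero even integer, which is what you observe, but this is not really ``exceptional'' --- it is simply that the two halves of the product $\int_{0}^{\pi}(\cdot)d\theta\cdot\int_{\mS^{m-1}}(\cdot)dS$ cannot both survive simultaneously.
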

\begin{proof}
Using the homogeneity of the functions we get

\begin{align*}
\langle\psi_{\uta,\alpha,k}^j,\psi_{\uta,\alpha',k'}^j\rangle_{\mathcal{OL}^2} =& \int_{0}^{\pi} e^{i\theta(\alpha'+k'-\alpha-k)}d\theta\\
&\times\int_{\mS^{m-1}}\left[M[\psi_{\uta,\alpha,k}](\uom)\right]^{\dagger} (-\uom^2)^j M[\psi_{\uta,\alpha',k'}](\uom)   dS(\uom)\\
=&\langle M[\psi_{\uta,\alpha,k}],M[\psi_{\uta,\alpha',k'}]\rangle_{\mathcal{OL}^2},
\end{align*}
where we used that $\uom^{\dagger} = -\uom$ and $-\uom^2 = 1$. We can now use (\ref{fischer mono}) to decompose $\psi_{\uta,\alpha',k'}$:

\[
\psi_{\uta,\alpha',k'}(z) = \sum_{\ell=0}^{\alpha'+k'} \uz^j M_{\alpha'+k'-\ell}(z)
\]
with $M_{\alpha'+k'-\ell}(z)$ a monogenic function for each $\ell$ and $M_{\alpha'+k'}(z) = M[\psi_{\uta,\alpha',k'}](z)$. So now we have

\begin{align*}
\langle M[\psi_{\uta,\alpha,k}],\psi_{\uta,\alpha',k'}\rangle_{\mathcal{OL}^2} =& \int_{0}^{\pi} e^{i\theta(\alpha'+k'-\alpha-k)}d\theta\\
&\times\int_{\mS^{m-1}}\left[M[\psi_{\uta,\alpha,k}](\uom)\right]^{\dagger}  \sum_{\ell=0}^{\alpha'+k'} \uom^{\ell} M_{\alpha'+k'-\ell}(\uom)  dS(\uom)\\
=& \sum_{\ell=0}^{\alpha'+k'}\int_{0}^{\pi} e^{i\theta(\alpha'+k'-\alpha-k)}d\theta\\
&\times\left\langle \overline{\left[M[\psi_{\uta,\alpha,k}](\ux)\right]^{\dagger}  \ux^{\ell}}, M_{\alpha'+k'-\ell}(\ux)\right\rangle_{\mS^{m-1}}.\\
\end{align*}

If we now use (\ref{Fisch Inprod}) we see that 

\begin{align*}
\left\langle \overline{\left[M[\psi_{\uta,\alpha,k}](\ux)\right]^{\dagger}  \ux^{\ell}}, M_{\alpha'+k'-\ell}(\ux)\right\rangle_{\mS^{m-1}} &= \left\langle \overline{\left[M[\psi_{\uta,\alpha,k}](\ux)\right]^{\dagger}  \ux^{\ell-1}}, \upx M_{\alpha'+k'-\ell}(\ux)\right\rangle_{\mS^{m-1}}\\
&= 0
\end{align*} for each $\ell\neq 0$, hence the last equality of (\ref{inprod equalities}) is proven. Moreover, if $\alpha+k\neq \alpha'+k'$ we have 
\[
\left\langle \overline{\left[M[\psi_{\uta,\alpha,k}](\ux)\right]^{\dagger}}, M_{\alpha'+k'-\ell}(\ux)\right\rangle_{\mS^{m-1}} = 0.
\]
Furthermore, $\upx M[\psi_{\uta,\alpha,k}](\ux) = 0$ and hence $\Delta_{\ux}M[\psi_{\uta,\alpha,k}](\ux) = 0$. But as $\Delta_{\ux}$ is a real-valued operator, we also have $\Delta_{\ux} \overline{\left[M[\psi_{\uta,\alpha,k}](\ux)\right]^{\dagger}} = 0$. So now using (\ref{sphere inprod 0}) we get the anticipated result.

\end{proof}

\begin{theorem}\label{Thm coeff mono hua}
The kernel of the monogenic Hua-Radon transform is given by

\[
K^j(\uz,e^{-i\theta}\uom) = \sum_{\alpha\in\mathbb{N}} K^{j,\alpha}(\uz,e^{-i\theta}\uom),
\]
where

\[
K^{j,\alpha}(\uz,e^{-i\theta}\uom) = \uz^j L^\alpha(\uz,e^{-i\theta}\uom) (e^{i\theta}\uom)^{-j}
\]
with

\[
L^\alpha(\uz,e^{-i\theta}\uom) = \sum_{k=0}^\infty \lambda_k^{\alpha} M[\psi_{\uta,\alpha,k}](\uz)\left[M[\psi_{\uta,\alpha,k}](e^{i\theta}\uom)\right]^{\dagger}
\]
and

\[
\lambda_k^{\alpha} = \left\{\begin{array}{ll}
\pi A_m \left(\frac{m}{2}-1\right)  \dfrac{\Gamma(\frac{m}{2}+2s+k)^2}{4s!(s+k)!\Gamma(s-1+\frac{m}{2})\Gamma(\frac{m}{2}+s+k)} & \alpha = 2s,\\
\pi A_m \left(\frac{m}{2}-1\right)\dfrac{\Gamma(\frac{m}{2}+2s+k+1)^2}{16s!(s+k+1)!\Gamma(s+\frac{m}{2})\Gamma(\frac{m}{2}+s+k)} & \alpha = 2s+1.
\end{array}\right.
\]
\end{theorem}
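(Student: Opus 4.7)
The plan is to exploit the orthogonality of the generating family $\{\psi_{\uta,\alpha,k}^j\}_{k\in\mathbb{N}}$ and read off the reproducing kernel term by term. For fixed $\alpha$ and $j$ these generators are homogeneous of pairwise distinct degrees $\alpha+k+j$, so Proposition~\ref{prop equalities psij inprod} gives $\langle\psi_{\uta,\alpha,k}^j,\psi_{\uta,\alpha,k'}^j\rangle_{\mathcal{OL}^2}=0$ whenever $k\neq k'$. Combined with the orthogonality of the modules $\mathfrak{M}^{j,\alpha}(\uta)$ across different $\alpha$ already built into the definition of $\cM_{\uta,j}$, the reproducing kernel decomposes as $K^j=\sum_\alpha K^{j,\alpha}$, where
\[
K^{j,\alpha}(\uz,w)=\sum_{k\ge 0}\frac{\psi_{\uta,\alpha,k}^j(\uz)\,[\psi_{\uta,\alpha,k}^j(w)]^{\dagger}}{\|\psi_{\uta,\alpha,k}^j\|^2_{\mathcal{OL}^2}}.
\]
Substituting $\psi_{\uta,\alpha,k}^j(\uz)=\uz^j M[\psi_{\uta,\alpha,k}](\uz)$ in both arguments, evaluating the outer $(w^j)^{\dagger}$ at $w=e^{-i\theta}\uom$ via $(e^{-i\theta}\uom)^{\dagger}=-e^{i\theta}\uom$, and reorganising the resulting phase/sign (which get absorbed into $\lambda_k^\alpha$) puts the kernel into the form $\uz^j L^\alpha(\uz,e^{-i\theta}\uom)(e^{i\theta}\uom)^{-j}$ announced in the theorem.

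The remaining task is the closed-form evaluation of $\|\psi_{\uta,\alpha,k}^j\|^2_{\mathcal{OL}^2}$. Because $(\uom^j)^{\dagger}\uom^j=1$ on $\mathbb{S}^{m-1}$, the $\uz^j$ factor drops out of the norm, and the $\theta$-integrand is then independent of $\theta$, contributing only a factor of $\pi$. Invoking the last equality of Proposition~\ref{prop equalities psij inprod} reduces this to
\[
\|\psi_{\uta,\alpha,k}^j\|^2_{\mathcal{OL}^2}=\pi\int_{\mathbb{S}^{m-1}}\bigl[M[\psi_{\uta,\alpha,k}](\uom)\bigr]^{\dagger}\,\psi_{\uta,\alpha,k}(\uom)\,dS(\uom).
\]
I would then expand the left factor via Lemma~\ref{Lemma Fisch decomp M[psi]}: using $(\uom^i)^{\dagger}=(-1)^i\uom^i$ together with the reality of $\mu_{i,\alpha,k}$, one obtains on $\mathbb{S}^{m-1}$
\[
\bigl[M[\psi_{\uta,\alpha,k}](\uom)\bigr]^{\dagger}=\sum_i(-1)^i\mu_{i,\alpha,k}\,\psi_{\uta,\alpha-i,k}^{\dagger}(\uom)\,\uom^i,
\]
so the norm becomes a finite sum of spherical integrals
\[
\int_{\mathbb{S}^{m-1}}\psi_{\uta,\alpha-i,k}^{\dagger}(\uom)\,\uom^i\,\psi_{\uta,\alpha,k}(\uom)\,dS(\uom).
\]
Substituting the definitions of $\psi_{\uta,\bullet,k}$ and using $\uta^{\dagger}=\utd$, each such integral is exactly one of the types evaluated in Lemmas~\ref{lem4.7}, \ref{lem7.7}, \ref{lemma integral xtaux}, and the degree-matching conditions there leave only a controlled range of non-zero~$i$.

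The main obstacle will be the combinatorial simplification at the final stage. After the closed-form values from the technical lemmas and the explicit $\mu_{i,\alpha,k}$ from Lemma~\ref{Lemma Fisch decomp M[psi]} are substituted, the sum over $i$ turns into a terminating hypergeometric series at unit argument---a ${}_3F_2$ in general, degenerating to ${}_2F_1$ in simpler subcases---taking different shapes in the $\alpha=2s$ and $\alpha=2s+1$ cases. I expect to collapse these via a Chu--Vandermonde or Saalsch\"utz-type identity, in the same spirit as (but considerably heavier than) the ${}_2F_1(1)$ reduction carried out for $m=2$ in the remark following Lemma~\ref{Lemma Fisch decomp M[psi]}; it is exactly these manipulations that are deferred to Appendices~\ref{appendix A} and \ref{appendix B}. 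Taking reciprocals of the resulting closed forms and regrouping using $A_m=2\pi^{m/2}/\Gamma(m/2)$ and $\tfrac{m}{2}-1=\Gamma(\tfrac{m}{2})/\Gamma(\tfrac{m}{2}-1)$ then yields the two displayed formulas for $\lambda_k^\alpha$.
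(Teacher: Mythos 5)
Your proposal follows essentially the same route as the paper's proof: use orthogonality to reduce to computing $\|\psi^j_{\uta,\alpha,k}\|^2_{\mathcal{OL}^2}$, invoke Proposition~\ref{prop equalities psij inprod} to drop the $\uz^j$ factor and replace one $M[\psi]$ by $\psi$, expand the other via Lemma~\ref{Lemma Fisch decomp M[psi]}, evaluate the resulting spherical integrals with Lemmas~\ref{lem4.7}, \ref{lem7.7}, \ref{lemma integral xtaux}, and defer the terminating hypergeometric simplification to the appendix. The only cosmetic difference is that the paper frames the argument as a direct verification of the reproducing identity $\psi^j_{\uta,\alpha,k}=\int K^{j,\alpha}\psi^j_{\uta,\alpha,k}$ rather than writing $K^{j,\alpha}$ explicitly as a sum of rank-one projectors normalised by $\|\psi^j_{\uta,\alpha,k}\|^{-2}$, but these are the same computation.
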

\begin{proof}
In order to prove this result, we will prove that $K^{j,\alpha}$ reproduces the basis elements $\psi^j_{\uta,\alpha,k}$ and hence it will reproduce each element of $\oplus_{\alpha\in\mN}\mathfrak{M}^{j,\alpha}$. Moreover, due to the way $K^{j,\alpha}$ is constructed, we get that the monogenic Hua-Radon transform is indeed represented by an integral transform with kernel $K^{j,\alpha}$. Thus we need to prove that

\begin{equation}\label{formula 12}
\psi_{\uta,\alpha,k}^j(z)=\int_{\mS^{m-1}}\int_0^\pi K^{j,\alpha}(z,e^{-i\theta}\omega)\psi_{\uta,\alpha,k}^j(e^{i\theta}\uom)dS(\uom)d\theta.
\end{equation}
The theorem will then follow using the orthogonality relations of $\psi^j_{\uta,\alpha,k}$, i.e. if $\alpha\neq\alpha'$ or $k\neq k'$ then $\langle\psi_{\uta,\alpha,k}^j,\psi_{\uta,\alpha',k'}^j\rangle_{\mathcal{OL}^2} = 0$, which was shown in \cite{radonlie}. We can rewrite equation (\ref{formula 12}) using the orthogonality relations of $M[\psi_{\uta,\alpha,k}]$ to the following identity

\[
\lambda_k^{\alpha}M[\psi_{\uta,\alpha,k}](\uz)\int_{\mS^{m-1}}\int_0^\pi M[\psi_{\uta,\alpha,k}]( e^{i\theta}\uom)^{\dagger}M[\psi_{\uta,\alpha,k}](e^{i\theta}\uom)dS(\uom)d\theta = M[\psi_{\uta,\alpha,k}](\uz).
\]
We will now show that the $\lambda_{\alpha}^k$ stated in the Theorem will be the required coefficient in order to reproduce $M[\psi_{\uta,\alpha,k}]$. We have, using Lemma \ref{Lemma Fisch decomp M[psi]}

\begin{align*}
\int_{\mS^{m-1}}\int_0^\pi &\left[M[\psi_{\uta,\alpha,k}](e^{i\theta}\uom)\right]^{\dagger}M[\psi_{\uta,\alpha,k}](e^{i\theta}\uom)dS(\uom)d\theta\\
&=\int_{\mathbb{S}^{m-1}}\int_0^\pi \left[M[\psi_{\uta,\alpha,k}]( e^{i\theta}\uom)\right]^{\dagger}\psi_{\uta,\alpha,k}(e^{i\theta}\uom)dS(\uom)d\theta\\
&=\sum_{j=0}^{\alpha}\mu_{j, \alpha,k} \int_{\mathbb{S}^{m-1}}\int_0^\pi \left [\psi_{\uta,\alpha-j,k}(e^{i\theta}\uom)\right]^{\dagger}(e^{i\theta}\uom)^{-j}\psi_{\uta,\alpha,k}(e^{i\theta}\uom)dS(\uom)d\theta.
\end{align*}
Let us now set

\[
\Phi_{j, \alpha,k} = \int_{\mathbb{S}^{m-1}}\int_0^\pi \left[\psi_{\uta,\alpha-j,k}(e^{i\theta}\uom)\right]^{\dagger}(e^{i\theta}\uom)^{-j}\psi_{\uta,\alpha,k}(e^{i\theta}\uom)dS(\uom)d\theta.
\]
Using the fact that $\psi_{\uta,\alpha,k}$ has degree of homogenicity $\alpha+k$, we get

\begin{align*}
\Phi_{j,\alpha,k} &= \int_{\mathbb{S}^{m-1}}\int_0^\pi e^{-i\theta(\alpha-j+k)}(\psi_{\uta,\alpha-j,k}(\uom))^{\dagger}(e^{i\theta}\uom)^{-j}e^{i\theta(\alpha+k)}\psi_{\uta,\alpha,k}(\uom)dS(\uom)d\theta\\
&=\pi\int_{\mathbb{S}^{m-1}}(\psi_{\uta,\alpha-j,k}(\uom))^{\dagger}(\uom)^{-j}\psi_{\uta,\alpha,k}(\uom)dS(\uom).
\end{align*}
First we consider the case $\alpha=2s$ even. For $j=2l$ we have

\begin{align*}
\Phi_{2l, 2s, k} &= (-1)^{-l}\pi\int_{\mathbb{S}^{m-1}}(\psi_{\uta,\alpha-j,k}(\uom))^{\dagger}\psi_{\uta,\alpha,k}(\uom)dS(\uom)\\
&=(-1)^{k-l}\pi\utd\uta\int_{\mathbb{S}^{m-1}}\langle \uom,\utd\rangle^{k-l+2s}\langle \uom,\uta\rangle^{k-l+2s} dS(\uom).
\end{align*}
Hence by Lemma \ref{lem4.7}, we get

\begin{align*}
\Phi_{2l, 2s, k} &= (-1)^{k-l} \pi\utd\uta (-1)^{k-l+2s}2\pi^{\frac{m}{2}}\frac{\Gamma(k-l+2s+1)}{\Gamma(\frac{m}{2}+k-l+2s)}\\
&=2\pi^{\frac{m}{2}+1}\utd\uta \frac{\Gamma(k-l+2s+1)}{\Gamma(\frac{m}{2}+k-l+2s)}.
\end{align*}
Now defining
\[
\phi_{2l,2s,k} = 2\pi^{\frac{m}{2}+1}\frac{\Gamma(k-l+2s+1)}{\Gamma(\frac{m}{2}+k-l+2s)},
\]
we have $\Phi_{2l, 2s, k} =\phi_{2l,2s,k} \utd\uta$.\\
For $j=2l+1$, we have by Lemma \ref{lem7.7}

\begin{align*}
\Phi_{2l+1, 2s, k} &= (-1)^{-l+1}\pi\int_{\mathbb{S}^{m-1}}(\psi_{\uta,\alpha-j,k}(\uom))^{\dagger}\uom\psi_{\uta,\alpha,k}(\uom)dS(\uom)\\
&=(-1)^{k+l}\pi\int_{\mathbb{S}^{m-1}}\langle \uom,\utd\rangle^{2s-l+k}\langle \uom,\uta\rangle^{2s-l+k-1} \utd\uta\phantom{.}\uom\phantom{.}\uta dS(\uom)\\
&= -4\pi^{\frac{m}{2}+1} \frac{\Gamma(2s-l+k+1)}{\Gamma(2s-l+k+\frac{m}{2})} \utd\uta.
\end{align*}
Write
\[
\phi_{2l+1,2s,k} = -4\pi^{\frac{m}{2}+1} \frac{\Gamma(2s-l+k+1)}{\Gamma(2s-l+k+\frac{m}{2})},
\]
then we have that $\Phi_{2l+1, 2s, k} = \phi_{2l+1,2s,k} \utd\uta$. Note that $\Phi_{j,2s,k}$ is a multiple of $\utd\uta$ for each $j$. Hence we have
\begin{equation}
\lambda_k^{\alpha}M[\psi_{\uta,\alpha,k}](\uz)\left(\sum_{l=0}^s \mu_{2l, 2s, k}\phi_{2l,2s,k} + \sum_{l=0}^{s-1} \mu_{2l+1, 2s, k} \phi_{2l+1,2s,k}\right)\utd\uta = M[\psi_{\uta,\alpha,k}](\uz).
\end{equation}
Now note that each of the terms in the decomposition of $M[\psi_{\uta,2s,k}]$ shown in Lemma \ref{Lemma Fisch decomp M[psi]} is a multiple of $\uta$ since
\begin{align*}
\psi_{\uta,2s-2j,k}(\uz) &= \langle\uz,\uta\rangle^{s-j+k}\langle\uz,\utd\rangle^{s-j} \uta,\\
\psi_{\uta,2s-2j-1,k}(\uz) &= \langle\uz,\uta\rangle^{s-j+k-1}\langle\uz,\utd\rangle^{s-j} \utd\uta.
\end{align*}
Thus if we use Lemma \ref{taulemma} ($i$), i.e. $\uta\utd\uta = 4\uta$, we have
\[
4\lambda_k^{2s}\left(\sum_{l=0}^s \mu_{2l, 2s, k}\phi_{2l,2s,k} + \sum_{l=0}^{s-1} \mu_{2l+1, 2s, k} \phi_{2l+1,2s,k}\right) = 1
\]
Summarizing the case $\alpha = 2s$ even, we have

\begin{align*}
\frac{1}{4}(\lambda_k^{2s})^{-1} =& \sum_{l=0}^s \mu_{2l, 2s, k}\phi_{2l,2s,k} + \sum_{l=0}^{s-1} \mu_{2l+1, 2s, k} \phi_{2l+1,2s,k}\\
=&\phantom{.}\frac{2\pi^{\frac{m}{2}+1}\Gamma(s+1)\Gamma(s+k+1)}{\Gamma(\frac{m}{2}+2s+k)}\left(\sum_{l=0}^s  (-1)^l\frac{\Gamma(2s+k-l+1)}{\Gamma(l+1)\Gamma(s-l+1)\Gamma(s+k-l+1)}\right.\\
&-\left.\sum_{l=0}^{s-1}  (-1)^l\frac{\Gamma(2s+k-l+1)}{\Gamma(l+1)\Gamma(s-l)\Gamma(s+k-l+1)(\frac{m}{2}+2s+k-l-1)}\right).
\end{align*}
Now consider the case where $\alpha=2s+1$ is odd. For $j=2l$ we have

\begin{align*}
\Phi_{2l, 2s+1, k} &= (-1)^{-l}\pi\int_{\mathbb{S}^{m-1}}(\psi_{\uta,\alpha-j,k}(\uom))^{\dagger}\psi_{\uta,\alpha,k}(\uom)dS(\uom)\\
&=(-1)^{k-l+1}\pi 4 \utd\uta\int_{\mathbb{S}^{m-1}}\langle \uom,\utd\rangle^{2s+k-l+1}\langle \uom,\uta\rangle^{2s+k-l+1} dS(\uom).
\end{align*}
Hence by Lemma \ref{lem4.7}, we get

\begin{align*}
\Phi_{2l, 2s+1, k} &= (-1)^{k-l+1} \pi 4 \utd\uta (-1)^{2s+k-l+1}2\pi^{\frac{m}{2}}\frac{\Gamma(2s+k-l+2)}{\Gamma(\frac{m}{2}+2s+k-l+1)}\\
&=8\pi^{\frac{m}{2}+1}\utd\uta \frac{\Gamma(k-l+2s+2)}{\Gamma(\frac{m}{2}+k-l+2s+1)}.
\end{align*}
If we now define
\[
\phi_{2l,2s+1,k} = 8\pi^{\frac{m}{2}+1} \frac{\Gamma(k-l+2s+2)}{\Gamma(\frac{m}{2}+k-l+2s+1)},
\]
we have $\Phi_{2l, 2s+1, k}=\phi_{2l,2s+1,k}\utd\uta$.\\
For $j=2l+1$, we have by Lemma \ref{lem7.7}

\begin{align*}
\Phi_{2l+1, 2s+1, k} &= (-1)^{-l+1}\pi\int_{\mathbb{S}^{m-1}}(\psi_{\uta,\alpha-j,k}(\uom))^{\dagger}\uom\psi_{\uta,\alpha,k}(\uom)dS(\uom)\\
&=(-1)^{k+l+1}\pi\int_{\mathbb{S}^{m-1}}\langle \uom,\utd\rangle^{2s-l+k}\langle \uom,\uta\rangle^{2s-l+k+1} \utd\uom\phantom{.}\utd\uta dS(\uom)\\
&= -4\pi^{\frac{m}{2}+1} \frac{\Gamma(2s-l+k+2)}{\Gamma(2s-l+k+1+\frac{m}{2})} \utd\uta.
\end{align*}
We can now define
\[
\phi_{2l+1,2s+1,k} = -4\pi^{\frac{m}{2}+1} \frac{\Gamma(2s-l+k+2)}{\Gamma(2s-l+k+1+\frac{m}{2})},
\]
so that $\Phi_{2l+1, 2s+1, k}=\phi_{2l+1,2s+1,k}\utd\uta$.\\
Summarizing the case $\alpha = 2s+1$ odd, we have

\begin{align*}
\frac{1}{4}(\lambda_k^{2s+1})^{-1} =& \sum_{l=0}^s \mu_{2l, 2s+1, k}\phi_{2l,2s+1,k} + \sum_{l=0}^{s} \mu_{2l+1, 2s+1, k} \phi_{2l+1,2s+1,k}\\
=&\phantom{.}\frac{8\pi^{\frac{m}{2}+1}\Gamma(s+1)\Gamma(s+k+2)}{\Gamma(\frac{m}{2}+2s+k+1)}\left(\sum_{l=0}^s  (-1)^l\frac{\Gamma(2s+k-l+2)}{\Gamma(l+1)\Gamma(s-l+1)\Gamma(s+k-l+2)}\right.\\
&-\left.\sum_{l=0}^{s}  (-1)^l\frac{\Gamma(2s+k-l+2)\Gamma(\frac{m}{2}+2s+k-l)}{\Gamma(l+1)\Gamma(s-l+1)\Gamma(s+k-l+1)\Gamma(\frac{m}{2}+2s+k-l+1)}\right).
\end{align*}
After some lengthy and straightforward calculations which are contained in Appendix \ref{appendix A}, we get the result stated in the theorem.
\end{proof}

\section{Inversion of the monogenic Hua-Radon transform}\label{Section::inversion}
\setcounter{equation}{0}

Recall that the zonal spherical monogenics $\cC_k(\ux,\uy)$ can be written in terms of Gegenbauer polynomials $C_{k}^{\frac{m}{2}-1}$ and $C_{k-1}^{\frac{m}{2}-1}$ (see \cite{planewaves})

\begin{equation}\label{zonal gegenbauer identity}
\cC_k(\ux,\uy) = \frac{(|\ux| |\uy|)^k}{m-2}\left((k+m-2) C_k^{\frac{m}{2}-1}(t) + (m-2) \frac{\ux\wedge\uy}{|\ux| |\uy|} C_{k-1}^{\frac{m}{2}-1}(t)\right)
\end{equation}
with $t= \langle\ux,\uy\rangle/(|\ux||\uy|)$. Moreover they exhibit the following properties:

\begin{enumerate}[(i)]
\item 
$\cC_k(\ux,\uy)$ is a homogeneous polynomial of degree $k$ in $\ux$ and $\uy$,

\item
$\upx \cC_k(\ux,\uy) = 0$ and $\cC_k(\ux,\uy)\upy = 0$, where $\cC_k(\ux,\uy)\upy = \sum_{j=1}^m (\partial_{y_j}\cC_k(\ux,\uy))e_j$,  

\item
For every $\sigma\in\mbox{Spin}(m) = \{\prod_{i=1}^{2r} \sigma_i\vert \sigma_i \in\mS^{m-1}\}$ the functions $\cC_k(\ux,\uy)$ have the invariance property

\[
\sigma \cC_k(\overline{\sigma}\ux\sigma,\overline{\sigma}\uy \sigma) \overline{\sigma} = \cC_k(\ux,\uy).
\]
where we note that $\mbox{Spin}(m)/\{-1,1\} \simeq SO(m)$, see \cite{Friederich spin, Gil Mur spin}.
\end{enumerate}
Recall the following lemma proven in \cite{Szego} which proves that the zonal spherical monogenics $\cC_k(\ux,\uy)$ are, up to a constant, the unique Clifford algebra valued functions which admit these properties.

\begin{lemma}\label{Unicity zonal mono}
Let $F(\ux,\uy)$ be a Clifford valued function satisfying the properties $(i)-(ii)-(iii)$ above. Then there exist complex constants $\lambda$ and $\mu$ such that

\[
F(\ux,\uy) = (\lambda + \mu e_{1\ldots m})\cC_{k}(\ux,\uy).
\]
\end{lemma}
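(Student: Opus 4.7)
The plan is to exploit the spin-invariance (iii) to reduce $F$ to a canonical slice, and then to use the residual isotropy together with the monogenicity (ii) to pin $F$ down to a two-parameter family generated by $\cC_k$ and $e_{1\ldots m}\cC_k$.

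First, I would use (iii) together with (i) to reduce to a slice. For any pair $(\ux,\uy) \in (\mR^m)^2$ with $\ux \neq 0$ and $\uy \notin \mR\ux$, one can find $\sigma \in \mathrm{Spin}(m)$ sending $(\ux,\uy)$ to a canonical pair $(|\ux|e_1, ae_1+be_2)$ with $a\in\mR$, $b>0$. By (iii), $F$ is therefore determined globally by its values on the three-parameter slice $\{(\rho e_1, ae_1+be_2)\}$, and by (i) this restriction has the form $F(\rho e_1, ae_1+be_2) = \rho^{k}\,\widetilde F(a,b)$ with $\widetilde F$ a polynomial of total degree $k$ in $(a,b)$ valued in $\mC_m$.

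Next, I would invoke the residual isotropy. The stabilizer in $\mathrm{Spin}(m)$ of the canonical pair $(e_1,e_2)$ is the subgroup $\mathrm{Spin}(m-2)$ generated by the products $e_ie_j$ with $i,j \geq 3$. Applying (iii) with $\sigma$ in this stabilizer forces $\widetilde F(a,b)$ to be invariant under the adjoint action of this subgroup on $\mC_m$. Since $e_1$ and $e_2$ commute with $\mathrm{Spin}(m-2)$, and since the $\mathrm{SO}(n)$-adjoint invariants in $\mathrm{Cl}(n)$ are spanned by $1$ and the pseudoscalar, the fixed subspace equals
\[
V \;:=\; \mathrm{Cl}(e_1,e_2) \;\oplus\; \mathrm{Cl}(e_1,e_2)\cdot e_{3\ldots m} \;\subset\; \mC_m,
\]
which has dimension $8$.

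Third, I would impose the monogenicity. Note that both $\cC_k$ and $e_{1\ldots m}\cC_k$ already satisfy (i)--(iii): the pseudoscalar $e_{1\ldots m}$ is $\mathrm{Spin}(m)$-invariant under twisted conjugation (as the top exterior power), and it commutes or anticommutes with each 1-vector, so $\upx(e_{1\ldots m}G) = \pm e_{1\ldots m}\upx G$ and similarly on the right, which preserves bi-monogenicity. Translating the left and right monogenicity of $F$ back to the slice via (iii)---replacing off-slice derivatives by tangential ones obtained from the infinitesimal spin action---yields a closed linear system of ODEs for the scalar coefficients of $\widetilde F$ in a basis of $V$. After the substitution $t = a/\sqrt{a^2+b^2}$ (and factoring out the radial homogeneity in $\uy$), these reduce to Gegenbauer's equation with parameter $m/2-1$, whose polynomial solutions of degree $k$ form a 2-dimensional space. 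Matching this against the explicit formula \eqref{zonal gegenbauer identity} for $\cC_k$ and for $e_{1\ldots m}\cC_k$, and propagating back via (iii), produces
\[
F(\ux,\uy) = \lambda\,\cC_k(\ux,\uy) + \mu\,e_{1\ldots m}\cC_k(\ux,\uy) = (\lambda + \mu\,e_{1\ldots m})\cC_k(\ux,\uy)
\]
for uniquely determined $\lambda,\mu \in \mC$.

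The principal obstacle is the third step: converting $\upx F = 0$ and $F\upy = 0$ into an explicit closed system of ODEs for the scalar components of $\widetilde F$ on the slice, and then verifying that the resulting space of degree-$k$ polynomial solutions has dimension exactly two. The bookkeeping is complicated by the non-commutativity of $\mC_m$ and must be organized by Clifford grade; moreover, one must handle the cases of even and odd $m$ uniformly since the commutation of $e_{1\ldots m}$ with 1-vectors picks up a sign. Once the ODE system is recognized as Gegenbauer's, however, the dimension count is standard and the identification with $\cC_k$ and $e_{1\ldots m}\cC_k$ follows directly from \eqref{zonal gegenbauer identity}.
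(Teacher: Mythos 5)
The paper does not actually prove this lemma; it cites the proof to Colombo--Sabadini--Sommen (the reference \texttt{[Szego]}), so there is no in-paper argument to compare against line by line. Evaluated on its own merits, your proposal identifies the right shape of argument: use (iii) with (i) to reduce $F$ to a canonical two-plane slice, compute the invariants of the residual $\mathrm{Spin}(m-2)$ stabilizer (your identification of the $8$-dimensional fixed subspace $V$ is correct), and then pin $F$ down by imposing (ii). You also correctly verify the converse, that $e_{1\ldots m}\cC_k$ itself satisfies (i)--(iii), which uses that the pseudoscalar commutes with every even element and that $e_j e_{1\ldots m}=(-1)^{m-1}e_{1\ldots m}e_j$.

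However, as you acknowledge, the decisive step is missing, and there is also a substantive imprecision in the way you describe it. Translating $\upx F=0$ and $F\upy=0$ on the slice into an explicit, closed system for the $V$-valued polynomial $\widetilde F(a,b)$ and establishing that its homogeneous degree-$k$ solutions form a space of dimension exactly two is the entire content of the lemma, and it is not carried out. Moreover, the remark that ``these reduce to Gegenbauer's equation \ldots whose polynomial solutions of degree $k$ form a 2-dimensional space'' misstates what is going on: for a fixed degree and parameter the Gegenbauer ODE has (generically) a \emph{one}-dimensional space of polynomial solutions; the two degrees of freedom in the conclusion do not come from two linearly independent polynomial solutions of a single second-order ODE, but rather from the two ``channels'' $1$ and $e_{1\ldots m}$ inside $V$, each of which carries a coupled pair $C_k^{m/2-1}$, $C_{k-1}^{m/2-1}$ as in \eqref{zonal gegenbauer identity}. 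To make the argument work you would have to exhibit the coupled system, solve it across the relevant Clifford grades, and track both Gegenbauer indices. A minor further point: the slice reduction as stated assumes $\ux\neq 0$ and $\uy\notin\mR\ux$; the degenerate configurations are filled in by continuity of polynomials, but this should be said. As it stands the submission is a plausible outline consistent in spirit with the cited proof, not a complete proof.
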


The functions $M[\psi_{\uta,\alpha,k}]$ that we defined earlier in Section \ref{Section::transform}, meet ($i$) and ($ii$). But they do not satisfy property ($iii$). However they do admit the following Lemma.

\begin{lemma}\label{Lemma spin on M[psi]}
Let $\sigma\in\text{Spin}(m)$, then it holds that

\[
\sigma M[\psi_{\uta,\alpha,k}](\overline{\sigma}\ux\sigma)\overline{\sigma} = M[\psi_{\sigma\uta\overline{\sigma},\alpha,k}](x).
\]
\end{lemma}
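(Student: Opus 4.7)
The plan is to reduce the claim to two separate equivariance statements: the transformation law for the functions $\psi_{\uta,\alpha,k}$ under the twisted Spin action $L_\sigma f(\ux) := \sigma f(\overline{\sigma}\ux\sigma)\overline{\sigma}$, and the commutation of $L_\sigma$ with the monogenic projection $M[\cdot]$. Together these give
\[
M[\psi_{\sigma\uta\overline{\sigma},\alpha,k}](\ux) = M[L_\sigma\psi_{\uta,\alpha,k}](\ux) = L_\sigma M[\psi_{\uta,\alpha,k}](\ux) = \sigma M[\psi_{\uta,\alpha,k}](\overline{\sigma}\ux\sigma)\overline{\sigma},
\]
which is exactly the stated identity.

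For the first statement, I would verify the intertwining identity $\langle \overline{\sigma}\ux\sigma,\uv\rangle = \langle \ux, \sigma\uv\overline{\sigma}\rangle$ for any 1-vector $\uv\in\mC^m$. This follows immediately from the Clifford relation $\uu\uv+\uv\uu = -2\langle\uu,\uv\rangle$ together with $\sigma\overline{\sigma}=1$, by multiplying the anti-commutation relation for $\overline{\sigma}\ux\sigma$ and $\uv$ on the left by $\sigma$ and on the right by $\overline{\sigma}$. Applying the identity with $\uv=\uta$ and $\uv=\utd$, and noting that $\sigma^{\dagger}=\overline{\sigma}$ for $\sigma\in\text{Spin}(m)$ so that $(\sigma\uta\overline{\sigma})^{\dagger}=\sigma\utd\overline{\sigma}$, a short computation in both parities of $\alpha$ yields $L_\sigma\psi_{\uta,\alpha,k}(\ux) = \psi_{\sigma\uta\overline{\sigma},\alpha,k}(\ux)$. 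One also checks that $\sigma\uta\overline{\sigma} = \sigma\ut\overline{\sigma}+i\sigma\us\overline{\sigma}$ remains of the admissible form $\ut'+i\us'$ with orthonormal $\ut',\us'\in\mS^{m-1}$, since $\uv\mapsto\sigma\uv\overline{\sigma}$ acts on $\mR^m$ as a rotation.

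For the second statement, recall from Proposition \ref{Projection on monogenic} that on $(\alpha+k)$-homogeneous polynomials $M[\cdot]$ coincides with $\mathrm{proj}_0^{\alpha+k}=\sum_j\beta_{j,0}\uz^j\upz^j$, a scalar combination of iterated compositions of left multiplication by $\uz$ and the Dirac operator $\upz$. Left multiplication by $\uz$ commutes with $L_\sigma$ directly, since $\sigma(\overline{\sigma}\ux\sigma)=\ux\sigma$. The commutation $\upx L_\sigma = L_\sigma \upx$ is the classical Spin-equivariance of the Dirac operator: the chain rule gives $\partial_{x_j}[f(\overline{\sigma}\ux\sigma)] = \sum_k R_{kj}(\partial_{y_k}f)(\overline{\sigma}\ux\sigma)$ with $R$ the rotation matrix associated to $\sigma$, and then invoking $\sum_j R_{kj}e_j = \sigma e_k\overline{\sigma}$ reassembles the sum into $\sigma(\upy f)(\overline{\sigma}\ux\sigma)\overline{\sigma}$. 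Since $L_\sigma$ also preserves degrees of homogeneity, it commutes with the whole operator $\mathrm{proj}_0^{\alpha+k}$. The main technical point is precisely this equivariance of $\upx$, which requires careful bookkeeping of the rotation matrix entries; once it is in place, assembling the final identity is immediate.
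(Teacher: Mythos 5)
Your proof is correct, and it takes a genuinely different route from the paper at one key juncture. The paper, having just established in Lemma~\ref{Lemma Fisch decomp M[psi]} the explicit decomposition $M[\psi_{\uta,\alpha,k}](\ux)=\sum_{j}\mu_{j,\alpha,k}\,\ux^{j}\psi_{\uta,\alpha-j,k}(\ux)$ with coefficients $\mu_{j,\alpha,k}$ that do not depend on $\uta$, applies $L_\sigma$ term by term: $\sigma(\overline{\sigma}\ux\sigma)^j=\ux^{j}\sigma$ and $\langle\overline{\sigma}\ux\sigma,\uta\rangle=\langle\ux,\sigma\uta\overline{\sigma}\rangle$ turn each summand into the corresponding summand for $\sigma\uta\overline{\sigma}$, and the $\uta$-independence of the $\mu$'s does the rest. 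You instead bypass the explicit decomposition and prove the cleaner structural fact that $L_\sigma$ commutes with the monogenic projection operator $\mathrm{proj}_0^{\alpha+k}=\sum_j\beta_{j,0}\uz^j\upz^j$, which you reduce to the Spin-equivariance $\upx L_\sigma=L_\sigma\upx$ of the Dirac operator. The two approaches share the crucial identity $\langle\overline{\sigma}\ux\sigma,\uv\rangle=\langle\ux,\sigma\uv\overline{\sigma}\rangle$ and the transformation $L_\sigma\psi_{\uta,\alpha,k}=\psi_{\sigma\uta\overline{\sigma},\alpha,k}$; they differ in how the projection is handled. The paper's route is shorter in context because Lemma~\ref{Lemma Fisch decomp M[psi]} is already on hand, while yours is more conceptual and self-contained but asks you to establish (or cite) the standard equivariance of $\upx$, which requires the bookkeeping with the rotation matrix $R$ and the identity $\sum_j R_{kj}e_j=\sigma e_k\overline{\sigma}$ that you correctly sketch. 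One small presentational remark: you might note explicitly, as the paper does implicitly, that $(\sigma\uta\overline\sigma)^\dagger=\sigma\utd\overline\sigma$ uses $\sigma^\dagger=\overline\sigma$ for $\sigma\in\mathrm{Spin}(m)$ together with the reality of the $\ut,\us$.
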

\begin{proof}
Using Lemma \ref{Lemma Fisch decomp M[psi]} we have

\[
M[\psi_{\uta,\alpha,k}](x) = \psi_{\uta,\alpha,k}(x) + \mu_1\ux\psi_{\uta,\alpha-1,k}(x)+\ldots+\mu_{\alpha}\ux^{\alpha}\psi_{\uta,0,k}(x).
\]
Hence it suffices to look at $\sigma\psi_{\uta,\alpha,k}(\overline{\sigma} \ux\sigma)\overline{\sigma}$. Now observing that $\sigma\overline{\sigma} = \overline{\sigma}\sigma = 1$ and that $\langle \overline{\sigma} \ux\sigma,\uta\rangle = \langle\ux, \sigma\uta\overline{\sigma}\rangle$, we get 

\[
\sigma\psi_{\uta,\alpha,k}(\overline{\sigma} \ux\sigma)\overline{\sigma} = \psi_{\sigma\uta\overline{\sigma},\alpha,k}(x)
\]
and thus the statement is proven.
\end{proof}
We now have the following.

\begin{proposition}\label{zonal equality stiefel integral}
For $\alpha,k\in\mN$ we have

\begin{equation}\label{integral over stiefel}
\int_{\mS^{m-1}}\int_{\mS^{m-2}} M[\psi_{\uta,\alpha,k}](x)M[\psi_{\uta,\alpha,k}](y)^{\dagger} dS(\us)dS(\ut) = \gamma_{\alpha,k} \cC_{\alpha + k}(x,y)
\end{equation}
for a suitable real or complex constant $\gamma_{n,k}$ provided in Proposition \ref{prop comp gam}.
\end{proposition}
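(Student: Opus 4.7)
The plan is to reduce the statement to the uniqueness result of Lemma~\ref{Unicity zonal mono}. Denote the left-hand side of (\ref{integral over stiefel}) by $F(\ux,\uy)$; I will verify that $F$ satisfies properties (i)--(iii) listed just before Lemma~\ref{Unicity zonal mono}, apply that lemma to get $F(\ux,\uy) = (\lambda + \mu e_{1\ldots m})\cC_{\alpha+k}(\ux,\uy)$ for complex constants $\lambda,\mu$, and finish by arguing that the pseudoscalar contribution vanishes so that $\gamma_{\alpha,k} = \lambda$.

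The homogeneity~(i) is immediate, since $M[\psi_{\uta,\alpha,k}]$ is homogeneous of degree $\alpha+k$ in its argument and integration preserves this homogeneity in $\ux$ and $\uy$ separately. For the monogenicity~(ii), differentiation commutes with the Stiefel integral. The factor $M[\psi_{\uta,\alpha,k}](\ux)$ is annihilated by $\upx$ by construction, and for the conjugated factor I would use that $(e_j\partial_{y_j}g)^{\dagger} = -(\partial_{y_j}g^{\dagger})e_j$, which converts the left-monogenicity $\upy M[\psi_{\uta,\alpha,k}](\uy) = 0$ into the right-Dirac identity $M[\psi_{\uta,\alpha,k}](\uy)^{\dagger}\upy = 0$.

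The main obstacle is verifying the $\mathrm{Spin}(m)$-invariance~(iii). For $\sigma\in\mathrm{Spin}(m)$ one has $\overline\sigma = \sigma^{\dagger}$, which gives the identity $(\sigma g\,\overline\sigma)^{\dagger} = \sigma g^{\dagger}\overline\sigma$. Inserting $\overline\sigma\sigma = 1$ between the two factors of $F(\overline\sigma\ux\sigma,\overline\sigma\uy\sigma)$ and applying Lemma~\ref{Lemma spin on M[psi]} to each factor yields
\[
\sigma F(\overline\sigma\ux\sigma,\overline\sigma\uy\sigma)\overline\sigma = \int_{\mS^{m-1}}\!\int_{\mS^{m-2}} M[\psi_{\sigma\uta\overline\sigma,\alpha,k}](\ux)\,M[\psi_{\sigma\uta\overline\sigma,\alpha,k}](\uy)^{\dagger}\,dS(\us)\,dS(\ut).
\]
The substitution $\ut' = \sigma\ut\overline\sigma$, $\us' = \sigma\us\overline\sigma$ preserves the orthonormality conditions $|\ut'|=|\us'|=1$, $\langle\ut',\us'\rangle=0$ and the Stiefel measure, so the right-hand side collapses back to $F(\ux,\uy)$. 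The careful interplay of the Hermitian conjugation with the spin action is what makes this the delicate step.

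Applying Lemma~\ref{Unicity zonal mono} then gives $F(\ux,\uy) = (\lambda + \mu e_{1\ldots m})\cC_{\alpha+k}(\ux,\uy)$. To eliminate $\mu$ I would analyse Clifford grades: from Lemma~\ref{Lemma Fisch decomp M[psi]}, together with the observation that $\uta$ is a $1$-vector and $\utd\uta$ has only scalar and bivector parts, one checks that $M[\psi_{\uta,2s,k}]$ is supported in grades $\{1,3\}$ and $M[\psi_{\uta,2s+1,k}]$ in grades $\{0,2\}$, so $F$ lives in grades at most six, whereas $\mu e_{1\ldots m}\cC_{\alpha+k}$ has grades $\{m-2,m\}$. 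This forces $\mu = 0$ except possibly when $m\in\{4,6\}$, and those residual dimensions can be settled directly by the explicit evaluation of $\gamma_{\alpha,k}$ carried out in Proposition~\ref{prop comp gam}.
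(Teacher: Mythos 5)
Your overall strategy coincides with the paper's: check properties (i)--(iii), invoke Lemma \ref{Unicity zonal mono}, and then kill the pseudoscalar contribution. The verifications of (i)--(iii), including the handling of the Hermitian conjugation under the spin action via $\overline\sigma = \sigma^\dagger$, are correct and match the paper.

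The divergence, and the gap, is in the final step. The paper restricts to the diagonal $\ux=\uy$: there $\cC_{\alpha+k}(\ux,\ux)$ is a pure scalar, while the integrand $M[\psi_{\uta,\alpha,k}](\ux)\,M[\psi_{\uta,\alpha,k}](\ux)^\dagger$ collapses to grades $\{0,2\}$ only (the potential grade-$4$ term involves $\ux\utd\uta\ux$, and its $4$-vector part $\ux\wedge\ut\wedge\us\wedge\ux$ vanishes). Since $e_{1\ldots m}\cC_{\alpha+k}(\ux,\ux)$ sits in grade $m\geq 3$, matching grades forces $\mu=0$ uniformly for all $m\geq 3$. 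You instead count grades off the diagonal, obtain the (correct but coarse) bound ``grade $\leq 6$,'' and are left with $m\in\{4,6\}$ unresolved. Your proposed fix --- appealing to Proposition \ref{prop comp gam} --- does not close this: that proposition's proof sets $\ux=\uy=\uom$ and then extracts only the \emph{scalar} part, so it computes $\lambda = [L(\uom,\uom)]_0/[\cC_{\alpha+k}(\uom,\uom)]_0$ and is insensitive to any $\mu e_{1\ldots m}$ term (whose scalar part at $\ux=\uy$ is zero). Thus for $m=4$ (and $m=6$ under your bound) your argument as written does not exclude $\mu\neq 0$. Replacing the off-diagonal grade count by the paper's restriction to $\ux=\uy$, together with the $\ux\wedge\cdots\wedge\ux=0$ cancellation, removes the exceptional cases and closes the argument.
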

\begin{proof}
If we define the integral on the left of (\ref{integral over stiefel}) as $L_{\alpha,k}(x,y)$, then we see that this integral is a homogeneous polynomial of degree $\alpha+k$ in both $\ux$ and $\uy$. Moreover it is left monogenic in $\ux$ and right monogenic in $\uy$.\\
Furthermore, let $\sigma\in \text{Spin}(m)$, then by using Lemma \ref{Lemma spin on M[psi]} we get

\begin{align*}
\sigma L_{k}(\overline{\sigma}\ux\sigma, \overline{\sigma}\uy\sigma)\overline{\sigma} &= \int_{\mS^{m-1}}\int_{\mS^{m-2}} \sigma M[\psi_{\uta,\alpha,k}](\overline{\sigma}\ux\sigma)M[\psi_{\uta,\alpha,k}](\overline{\sigma}\uy\sigma)^{\dagger}\overline{\sigma} dS(\us)dS(\ut)\\
&= \int_{\mS^{m-1}}\int_{\mS^{m-2}}  M[\psi_{\sigma\uta\overline{\sigma},\alpha,k}](x)M[\psi_{\sigma\uta\overline{\sigma},\alpha,k}](y)^{\dagger} dS(\us)dS(\ut)\\
&= \int_{\mS^{m-1}}\int_{\mS^{m-2}} M[\psi_{\uta,\alpha,k}](x)M[\psi_{\uta,\alpha,k}](y)^{\dagger} dS(\us)dS(\ut).
\end{align*}

Hence $L_{\alpha,k}(x,y)$ is Spin-invariant, so all the conditions of Lemma \ref{Unicity zonal mono} are fulfilled. Now noting that $L_{\alpha,k}(x,x)$ is the sum of a scalar $a_{\alpha,k}(\ux)$ and a bivector $b_{\alpha,k}(\ux)$, $\cC_{\alpha+k}(\ux,\ux)$ is a scalar, we get 
\[
a_{\alpha,k}(\ux) + b_{\alpha,k}(\ux) = (\lambda + \mu e_{1\ldots m})\cC_{\alpha+k}(\ux,\ux).
\]
If we use the fact that $m\geq 3$ (see Remark \ref{remark::dimension}), we see that $b_{\alpha,k}(\ux) = 0$ and $\mu = 0$. Hence we have

\[
L_{\alpha,k}(x,y) = \gamma_{\alpha,k} \cC_{\alpha+k}(x,y).
\]

\end{proof}

It is possible to compute the constants $\gamma_{\alpha,k}$ in Proposition \ref{zonal equality stiefel integral} explicitly. They are as follows:

\begin{proposition}\label{prop comp gam}
For $s,k\in\mN$ we have

\begin{align*}
\gamma_{2s,k} &= \frac{s!(s+k)!\Gamma\left(\frac{m}{2}+s-1\right)}{\Gamma\left(\frac{m}{2}+2s+k\right)^2}\Gamma\left(\frac{m}{2}+s + k\right)(m-2)\frac{\Gamma(m-1)\Gamma(2s+k+1)}{\Gamma(2s+k+m-1)},\\
\gamma_{2s+1,k} &= \frac{s!(s+k+1)!\Gamma\left(\frac{m}{2}+s\right)}{\Gamma\left(\frac{m}{2}+2s+k\right)^2}\Gamma\left(\frac{m}{2}+s + k\right)(m-2)\frac{\Gamma(m-1)\Gamma(2s+k+2)}{\Gamma(2s+k+m)}.
\end{align*}

\end{proposition}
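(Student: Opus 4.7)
The plan is to pin down $\gamma_{\alpha,k}$ by specializing the identity of Proposition~\ref{zonal equality stiefel integral} to $\uy=\ux\in\mS^{m-1}$, taking scalar parts, and then integrating $\ux$ over the sphere. Since at $\uy=\ux$ the bivector contribution in formula~(\ref{zonal gegenbauer identity}) vanishes and $t=1$, using $C_n^{m/2-1}(1)=(m-2)_n/n!$ one gets the purely scalar evaluation $\cC_{\alpha+k}(\ux,\ux) = \Gamma(\alpha+k+m-1)/\bigl[(\alpha+k)!\,\Gamma(m-1)\bigr]$. After integrating $\ux$ over $\mS^{m-1}$, the right-hand side becomes $\gamma_{\alpha,k}\,A_m\,\Gamma(\alpha+k+m-1)/\bigl[(\alpha+k)!\,\Gamma(m-1)\bigr]$.

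On the left I would use the cyclic property $[AB^{\dagger}]_0 = [B^{\dagger}A]_0$ of the scalar trace together with Fubini to rewrite
\[
\int_{\mS^{m-1}}[L_{\alpha,k}(\ux,\ux)]_0\,dS(\ux) = \int_{\mS^{m-1}}\!\int_{\mS^{m-2}}\!\int_{\mS^{m-1}}\!\bigl[M[\psi_{\uta,\alpha,k}](\ux)^{\dagger}M[\psi_{\uta,\alpha,k}](\ux)\bigr]_0\,dS(\ux)\,dS(\us)\,dS(\ut).
\]
By Lemma~\ref{Lemma spin on M[psi]} together with the Spin-invariance of the sphere measure and of the scalar part, the innermost integral is independent of the pair $(\ut,\us)$, so the two outer integrals contribute only the factor $A_m A_{m-1}$. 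The remaining inner integral is computable from the identity already established inside the proof of Theorem~\ref{Thm coeff mono hua}: from
\[
\int_0^{\pi}\!\int_{\mS^{m-1}}[M[\psi_{\uta,\alpha,k}](e^{i\theta}\uom)]^{\dagger}M[\psi_{\uta,\alpha,k}](e^{i\theta}\uom)\,dS(\uom)\,d\theta = \tfrac{\utd\uta}{4\lambda_k^{\alpha}},
\]
homogeneity in $e^{i\theta}$ together with $[\utd\uta]_0=2$ yields
\[
\int_{\mS^{m-1}}\bigl[M[\psi_{\uta,\alpha,k}](\uom)^{\dagger}M[\psi_{\uta,\alpha,k}](\uom)\bigr]_0\,dS(\uom) = \frac{1}{2\pi\lambda_k^{\alpha}}.
\]

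Equating the two sides gives the one-line formula
\[
\gamma_{\alpha,k} = \frac{A_{m-1}\,(\alpha+k)!\,\Gamma(m-1)}{2\pi\,\lambda_k^{\alpha}\,\Gamma(\alpha+k+m-1)},
\]
and substituting the closed form of $\lambda_k^{\alpha}$ provided by Theorem~\ref{Thm coeff mono hua} separately for $\alpha=2s$ and $\alpha=2s+1$ produces the two stated expressions. The main obstacle is the final algebraic simplification: one has to match the ratio $A_{m-1}/A_m = \Gamma(m/2)/\bigl[\sqrt{\pi}\,\Gamma((m-1)/2)\bigr]$ with the $\Gamma$-quotients coming out of $\lambda_k^{\alpha}$ (using the Legendre duplication formula to clean up the powers of $\pi$) and carefully track the shift in the Gamma arguments, in particular in the odd case $\alpha=2s+1$ where $\lambda_k^{\alpha}$ carries slightly different shifts than in the even case.
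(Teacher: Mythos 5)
Your approach is genuinely different from (and far more economical than) the paper's. The paper proves Proposition~\ref{prop comp gam} by expanding $M[\psi_{\uet,\alpha,k}](\uom)M[\psi_{\uet,\alpha,k}](\uom)^{\dagger}$ using Lemma~\ref{Lemma Fisch decomp M[psi]}, splitting into four double sums, evaluating each via the technical Lemmas~\ref{lem4.7}--\ref{lemma integral xtaux}, and then closing the resulting hypergeometric-type double sums in Appendix~\ref{appendix B}. Your argument --- integrate over $\ux$ as well, use cyclicity of the scalar part $[AB]_0=[BA]_0$ and Spin-invariance of the inner sphere integral, then feed in the normalization integral $\int_{\mS^{m-1}}\bigl[M[\psi_{\uta,\alpha,k}](\uom)^{\dagger}M[\psi_{\uta,\alpha,k}](\uom)\bigr]_0\,dS(\uom)=1/(2\pi\lambda_k^{\alpha})$ already established in the proof of Theorem~\ref{Thm coeff mono hua} --- sidesteps all of those combinatorics in one stroke. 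This is a cleaner derivation and, as a bonus, makes the close relation between $\gamma_{\alpha,k}$ and $\lambda_k^{\alpha}$ transparent.

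However, if you actually substitute the $\lambda_k^{\alpha}$ printed in Theorem~\ref{Thm coeff mono hua} into your one-line formula, the algebra will \emph{not} close, for two independent reasons. First, there is a normalization mismatch in the paper: the constant in Proposition~\ref{prop comp gam} is really the one for the \emph{averaged} Stiefel integral (i.e.\ Proposition~\ref{zonal equality stiefel integral} should carry a factor $\tfrac{1}{A_m A_{m-1}}$ on the left, as is indeed inserted in the paper's own proof and as is needed for the dual transform $\tilde R$). You can see this at $\alpha=k=0$: $M[\psi_{\uta,0,0}]=\uta$, $\uta\uta^{\dagger}=\uta\utd = 2+2i\,\ut\wedge\us$, so the unnormalized Stiefel integral is $2A_m A_{m-1}$, while the stated formula gives $\gamma_{0,0}=2$. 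Hence your formula, which is the unnormalized one, should be replaced by $\gamma_{\alpha,k}=\dfrac{(\alpha+k)!\,\Gamma(m-1)}{2\pi A_m\,\lambda_k^{\alpha}\,\Gamma(\alpha+k+m-1)}$. Second, the prefactor in the paper's $\lambda_k^{\alpha}$ appears to be inverted: directly from the reproducing property for $\alpha=k=0$ one gets $\lambda_0^0 = 1/(4\pi A_m)$ (since the integral is simply $\pi A_m\,\utd\uta$ and $\uta\,\utd\uta=4\uta$), whereas the displayed formula gives $\pi A_m(\tfrac{m}{2}-1)^2/4$; the same mismatch is confirmed by the Appendix~\ref{appendix A} derivation, which actually yields the factor $\tfrac{1}{4\pi A_m(\frac m2-1)}$ in place of $\pi A_m(\tfrac m2-1)$. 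With these two corrections --- divide out $A_m A_{m-1}$ and invert the prefactor of $\lambda_k^{\alpha}$ --- your substitution reduces exactly to the two closed forms stated in Proposition~\ref{prop comp gam}, with no need for a Legendre-duplication manipulation at all (the $\pi A_m$'s cancel cleanly). So your route is sound; the obstacle you anticipated in the last step is not a genuine analytic difficulty but an artifact of internal inconsistencies in the paper's statements that you would need to resolve before the final substitution.
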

\begin{proof}
As the equality in Proposition \ref{zonal equality stiefel integral} must hold for each $\ux$ and $\uy$, it suffices to calculate $\gamma_{\alpha,k}$ for $\ux= \uy = \uom \in\mS^{m-1}$ and take the scalar part of the equation. The right-hand side of (\ref{integral over stiefel}) will now be equal to

\[
\gamma_{\alpha,k}\frac{\alpha+k+m-2}{m-2}C_{\alpha+k}^{m/2-1}(1).
\]
First we make the following observation for the left hand side of (\ref{integral over stiefel}). There exists $\sigma\in\mbox{Spin}(m)$ such that $\uta = \overline{\sigma}(e_1 + ie_2)\sigma$.\\
For ease of notation let us write $\uet = e_1 + ie_2$. Using Lemma \ref{Lemma spin on M[psi]} we get
\begin{equation}\label{integrand}
M[\psi_{\uta,\alpha,k}](\uom)M[\psi_{\uta,\alpha,k}](\uom)^{\dagger} =\overline{\sigma}M[\psi_{\uet,\alpha,k}](\sigma\uom\overline{\sigma})M[\psi_{\uet,\alpha,k}](\sigma\uom\overline{\sigma})^{\dagger} \sigma.
\end{equation}
The integral in (\ref{integral over stiefel}) can be interpreted as the mean of (\ref{integrand}) over the Stiefel manifold. But using the notation $\uta = \overline{\sigma}\uet\sigma$, we can also interpret it as a mean over the Spin-group $\mbox{Spin}(m)$ (see e.g. \cite{Szego}). Hence using Lemma \ref{Lemma spin on M[psi]} we get

\begin{align}\label{Stiefel naar Spin}
&\frac{1}{A_m A_{m-1}}\int_{\mS^{m-1}}\int_{\mS^{m-2}} M[\psi_{\uta,\alpha,k}](\uom)M[\psi_{\uta,\alpha,k}](\uom)^{\dagger} dS(\us)dS(\ut) \nonumber\\
&= \frac{1}{\mbox{Vol}(\mbox{Spin}(m))} \int_{\mbox{Spin}(m)} M[\psi_{\overline{\sigma}\uet\sigma,\alpha,k}](\uom)M[\psi_{\overline{\sigma}\uet\sigma,\alpha,k}](\uom)^{\dagger} dS(\sigma)\\
&= \frac{1}{\mbox{Vol}(\mbox{Spin}(m))} \int_{\mbox{Spin}(m)}\overline{\sigma}M[\psi_{\uet,\alpha,k}](\sigma\uom\overline{\sigma})M[\psi_{\uet,\alpha,k}](\sigma\uom\overline{\sigma})^{\dagger} \sigma dS(\sigma). \nonumber
\end{align}
Since the action of the Spin element on a vector is a rotation, it is easy to see that it is an automorphism on $k$-vectors, see \cite{Friederich spin, Gil Mur spin}. Hence we get that the scalar part of (\ref{Stiefel naar Spin}) is equal to

\begin{align*}
\frac{1}{\mbox{Vol}(\mbox{Spin}(m))} &\int_{\mbox{Spin}(m)} \left[M[\psi_{\uet,\alpha,k}](\sigma\uom\overline{\sigma})M[\psi_{\uet,\alpha,k}](\sigma\uom\overline{\sigma})^{\dagger}\right]_0  dS(\sigma)\\
&=\frac{1}{A_m} \int_{\mS^{m-1}} \left[M[\psi_{\uet,\alpha,k}](\uom)M[\psi_{\uet,\alpha,k}](\uom))^{\dagger}\right]_0  dS(\uom).
\end{align*}
Expanding our integrand yields
\[
M[\psi_{\uet,\alpha,k}](\uom)M[\psi_{\uet,\alpha,k}](\uom)^{\dagger} =  \sum_{j,l=0}^\alpha \mu_{j,\alpha,k}\mu_{l,\alpha,k} \uom^j \psi_{\uet,\alpha - j,k}(\uom)\psi_{\uet,\alpha - l,k}(\uom)^{\dagger} (\uom^{\dagger})^l.
\]

In order to continue, we need to know the parity of $\alpha-j$ and $\alpha-l$, so we will distinguish the cases where $\alpha$ is even and where $\alpha$ is odd. Suppose $\alpha = 2s$ even, then we have 

\begin{align*}
\sum_{j,l=0}^\alpha &\mu_{j,\alpha,k}\mu_{l,\alpha,k} \uom^j \psi_{\uet,\alpha - j,k}(\uom)\psi_{\uet,\alpha - l,k}(\uom)^{\dagger} (\uom^{\dagger})^l\\
=&\sum_{j=0}^{s}\sum_{l=0}^{s} \mu_{2j,\alpha,k}\mu_{2l,\alpha,k} (-1)^{j+l} \psi_{\uet,2s - 2j,k}(\uom)\psi_{\uet,2s - 2l,k}(\uom)^{\dagger} \\
&+\sum_{j=0}^{s-1}\sum_{l=0}^{s}\mu_{2j+1,\alpha,k}\mu_{2l,\alpha,k} (-1)^{j+l} \uom \psi_{\uet,2(s - j - 1) + 1,k}(\uom)\psi_{\uet,2s - 2l,k}(\uom)^{\dagger} \\
&+ \sum_{j=0}^{s}\sum_{l=0}^{s-1} \mu_{2j,\alpha,k}\mu_{2l+1,\alpha,k} (-1)^{j+l + 1} \psi_{\uet,2s - 2j,k}(\uom)\psi_{\uet,2s - 2l - 1,k}(\uom)^{\dagger} \uom\\
&+\sum_{j=0}^{s-1}\sum_{l=0}^{s-1}\mu_{2j+1,\alpha,k}\mu_{2l+1,\alpha,k} (-1)^{j + l + 1} \uom \psi_{\uet,2s - 2j - 1,k}(\uom)\psi_{\uet,2s - 2l - 1,k}(\uom)^{\dagger} \uom.
\end{align*}
where we used that $\uom^2 = -1$ and $\uom^{\dagger} = -\uom$ for $\uom\in\mS^{m-1}$. So now we have 4 different double sums we need to integrate. We can calculate the integrals using the previous lemmas and then we can take the scalar part in order to simplify the next calculations.

\begin{itemize}
\item
Rewriting the terms of the first sum, gives us

\begin{align*}
\mu_{2j, \alpha,k}\mu_{2l, \alpha,k} &(-1)^{j+l} \langle \uom, \uet\rangle^{s-j+k}\langle \uom, \uetd\rangle^{s-j}\uet\uetd \overline{\langle \uom, \uet\rangle}^{s-l+k}\overline{\langle \uom, \uetd\rangle}^{s-l}\\
&=\mu_{2j, \alpha,k}\mu_{2l, \alpha,k} (-1)^{j+l+k} \langle \uom, \uet\rangle^{2s-j-l+k}\langle \uom, \uetd\rangle^{2s-j-l+k}\uet\uetd
\end{align*}

Hence using Lemma \ref{lem4.7} we get

\begin{align*}
\frac{1}{A_m}&\int_{\mS^{m-1}} \mu_{2j, \alpha,k}\mu_{2l, \alpha,k} (-1)^{j+l} \psi_{\uet,2s - 2j,k}(\uom)\psi_{\uet,2s - 2l,k}(\uom)^{\dagger}dS(\uom)=\\
&=\frac{1}{A_m}\int_{\mS^{m-1}} \mu_{2j, \alpha,k}\mu_{2l, \alpha,k} (-1)^{j+l+k} \langle \uom, \uet\rangle^{2s-j-l+k}\langle \uom, \uetd\rangle^{2s-j-l+k}\uet\uetd dS(\uom)\\
&=\frac{1}{A_m}\mu_{2j, \alpha,k}\mu_{2l, \alpha,k} 2\pi^{\frac{m}{2}} \frac{\Gamma(2s-j-l+k+1)}{\Gamma\left(2s-j-l+k+\frac{m}{2}\right)} \uet\uetd 
\end{align*}
Hence the scalar part will be equal to

\[
\frac{1}{A_m}2\mu_{2j, \alpha,k}\mu_{2l, \alpha,k} 2\pi^{\frac{m}{2}} \frac{\Gamma(2s-j-l+k+1)}{\Gamma\left(2s-j-l+k+\frac{m}{2}\right)}.
\]

\item
For the terms of our second sum we get

\[
\mu_{2j+1, \alpha,k}\mu_{2l, \alpha,k} (-1)^{j+l} \uom \langle \uom, \uet\rangle^{s-j+k}\langle \uom, \uetd\rangle^{s-j-1} \uetd\uet\uetd \overline{\langle \uom, \uet\rangle}^{s-l+k}\overline{\langle \uom, \uetd\rangle}^{s-l}.
\]
Now using Remark \ref{remark uometd} we obtain

\begin{align*}
\frac{1}{A_m}&\int_{\mS^{m-1}} 4\mu_{2j+1, \alpha,k}\mu_{2l, \alpha,k} (-1)^{j+l+k} \langle \uom, \uet\rangle^{2s-j-l+k}\langle \uom, \uetd\rangle^{2s-j-l+k-1} \uom \uetd dS(\uom)\\
&=-\frac{1}{A_m}4\mu_{2j+1, \alpha,k}\mu_{2l, \alpha,k} \pi^{\frac{m}{2}} \frac{\Gamma(2s-j-l+k+1)}{\Gamma\left(2s-j-l+k+\frac{m}{2}\right)}\uet\uetd.
\end{align*}
 So now the scalar part yields
 
 \[
 -\frac{1}{A_m}8\mu_{2j+1, \alpha,k}\mu_{2l, \alpha,k} \pi^{\frac{m}{2}} \frac{\Gamma(2s-j-l+k+1)}{\Gamma\left(2s-j-l+k+\frac{m}{2}\right)}.
 \]

\item
For the terms of our third sum we get

\[
\mu_{2j, \alpha,k}\mu_{2l+1, \alpha,k} (-1)^{j+l+1}  \langle \uom, \uet\rangle^{s-j+k}\langle \uom, \uetd\rangle^{s-j} \uet\uetd\uet \overline{\langle \uom, \uet\rangle}^{s-l+k}\overline{\langle \uom, \uetd\rangle}^{s-l-1}\uom.
\]
Now using Lemma \ref{lem7.7} we get 

\begin{align*}
\frac{1}{A_m}&\int_{\mS^{m-1}} 4\mu_{2j, \alpha,k}\mu_{2l+1, \alpha,k} (-1)^{j+l+k} \langle \uom, \uet\rangle^{2s-j-l+k-1}\langle \uom, \uetd\rangle^{2s-j-l+k} \uet\uom  dS(\uom)\\
&=-\frac{1}{A_m}4\mu_{2j, \alpha,k}\mu_{2l+1, \alpha,k} \pi^{\frac{m}{2}} \frac{\Gamma(2s-j-l+k+1)}{\Gamma\left(2s-j-l+k+\frac{m}{2}\right)}\uet\uetd.
\end{align*}
Hence the scalar part becomes

 \[
 -\frac{1}{A_m}8\mu_{2j, \alpha,k}\mu_{2l+1, \alpha,k} \pi^{\frac{m}{2}} \frac{\Gamma(2s-j-l+k+1)}{\Gamma\left(2s-j-l+k+\frac{m}{2}\right)}.
 \]
 Note that this is equal to the terms of the second sum where we interchange $j$ and $l$.
\item
Lastly, for the terms of the fourth sum, we have

\[
\mu_{2j+1, \alpha,k}\mu_{2l+1, \alpha,k} (-1)^{j+l+1} \uom\langle \uom, \uet\rangle^{s-j+k}\langle \uom, \uetd\rangle^{s-j-1}\uet\uetd\uet\uetd \overline{\langle \uom, \uet\rangle}^{s-l+k}\overline{\langle \uom, \uetd\rangle}^{s-l -1}\uom.
\]

Using Lemma \ref{lemma integral xtaux} and taking the scalar part, the terms of this sum become

\[
\frac{1}{A_m}\mu_{2j+1, \alpha,k}\mu_{2l+1, \alpha,k} 16\pi^{\frac{m}{2}}\frac{(2s-j-l+k-1)!}{\Gamma(2s-j-l+k -1+\frac{m}{2})}.
\]
\end{itemize}
So now we have

\begin{align*}
\gamma_{2s,k}& \frac{2s+k+m-2}{m-2} C_{2s+k}^{\frac{m}{2}-1}(1)\\
=&\left[\frac{1}{A_m A_{m-1}}\int_{\mS^{m-1}}\int_{\mS^{m-2}} M[\psi_{\uta,\alpha,k}](\uom)M[\psi_{\uta,\alpha,k}](\uom)^{\dagger} dS(\us)dS(\ut)\right]_0\\
=&\frac{4}{A_m}\pi^{\frac{m}{2}}\left[\sum_{j=0}^s \sum_{l=0}^s \mu_{2j, \alpha,k}\mu_{2l, \alpha,k}  \frac{\Gamma(2s-j-l+k+1)}{\Gamma\left(2s-j-l+k+\frac{m}{2}\right)}\right.\\
&-4 \sum_{j=0}^{s-1} \sum_{l=0}^s\mu_{2j+1, \alpha,k}\mu_{2l, \alpha,k}  \frac{\Gamma(2s-j-l+k+1)}{\Gamma\left(2s-j-l+k+\frac{m}{2}\right)}\\
&+4\left.\sum_{j=0}^{s-1} \sum_{l=0}^{s-1}\mu_{2j+1, \alpha,k}\mu_{2l+1, \alpha,k} \frac{(2s-j-l+k-1)!}{\Gamma(2s-j-l+k -1+\frac{m}{2})}\right]\\
=&2\Gamma\left(\frac{m}{2}\right)\left[\sum_{j=0}^s \sum_{l=0}^s \mu_{2j, \alpha,k}\mu_{2l, \alpha,k}  \frac{\Gamma(2s-j-l+k+1)}{\Gamma\left(2s-j-l+k+\frac{m}{2}\right)}\right.\\
&-4 \sum_{j=0}^{s-1} \sum_{l=0}^s\mu_{2j+1, \alpha,k}\mu_{2l, \alpha,k}  \frac{\Gamma(2s-j-l+k+1)}{\Gamma\left(2s-j-l+k+\frac{m}{2}\right)}\\
&+4\left.\sum_{j=0}^{s-1} \sum_{l=0}^{s-1}\mu_{2j+1, \alpha,k}\mu_{2l+1, \alpha,k} \frac{(2s-j-l+k-1)!}{\Gamma(2s-j-l+k -1+\frac{m}{2})}\right].
\end{align*}

After doing the necessary calculations which are contained in Appendix \ref{appendix B}, we end up with the result stated in the Proposition.\\

The case $\alpha = 2s+1$ odd, is treated in a similar way.

\end{proof}

We can now find an inversion for the monogenic Hua-Radon transform. First of all, we define the dual Radon transform as follows:
\begin{definition}
The \textit{dual Radon transform} $\tilde{R}[F(\uz,\uta)]$ of a function $F(\uz,\uta)\in\oplus_{\alpha\in\mN} \mathfrak{M}^{j,\alpha}(\uta)$ is defined as

\[
\tilde{R}[F(\uz,\uta)] = \frac{1}{A_{m}A_{m-1}}\int_{\mS^{m-1}}\left(\int_{\mS^{m-2}}F(\uz,\uta) dS(\us)\right)dS(\ut), \qquad \uta = \ut+i\us
\]
where $\mS^{m-2}\subseteq \mS^{m-1}$ is the $(m-2)$-sphere orthogonal to $\ut$.
\end{definition}
Using this dual transform, we can construct an inversion. We get the following theorem.

\begin{theorem}\label{thm::dual of mono}
Let $M_{\ell}(z)$ be a monogenic polynomial of degree $\ell$. We have 

\[
\tilde{R}[\cM_{\uta, j}(\uz^{n} M_{\ell}(\uz))]  = 0 \qquad  \mathrm{if\ } n\neq j
\]
and for $f(z) = \uz^j M_l(z)$ we have

\[
\tilde{R}[\cM_{\uta, j}(\uz^j M_{\ell}(\uz))]  = \vartheta_{j,\ell} \uz^j M_{\ell}(z)
\]
with

\[
\vartheta_{j,\ell} = \frac{(\ell+1)!(m-2)!}{2(\ell+m-2)!}.
\]
\end{theorem}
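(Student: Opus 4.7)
The statement naturally splits along the two cases $n\neq j$ and $n=j$. For the first case, I would exploit that $\cM_{\uta,j}$ is defined as the orthogonal projection onto $\bigoplus_{\alpha\in\mN}\mathfrak{M}^{j,\alpha}(\uta)$, whose elements all live in the single Fischer summand $\uz^j\cdot\cM$ of the decomposition (\ref{fischer mono}). When $n\neq j$, the polynomial $\uz^n M_\ell(\uz)$ belongs to a different Fischer summand, so the $\mathcal{OL}^2$ inner product with any $\psi^j_{\uta,\alpha,k}$ vanishes: the $\theta$-integration first forces equal total degrees, and within a fixed total degree distinct Fischer components are orthogonal on the sphere (in the spirit of Remark~\ref{rem::sphere inprod 0 mono}). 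Consequently $\cM_{\uta,j}[\uz^n M_\ell]=0$, and this zero passes through $\tilde R$ unchanged.

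For the case $n=j$, the orthogonality of the family $\{\psi^j_{\uta,\alpha,k}\}_{\alpha,k\in\mN}$ together with the homogeneity of $\uz^j M_\ell$ (total degree $j+\ell$) forces $\alpha+k=\ell$, so only finitely many basis elements contribute:
\[
\cM_{\uta,j}[\uz^j M_\ell](\uz)=\sum_{\alpha=0}^{\ell}\frac{\langle\psi^j_{\uta,\alpha,\ell-\alpha},\uz^j M_\ell\rangle_{\mathcal{OL}^2}}{\|\psi^j_{\uta,\alpha,\ell-\alpha}\|^2_{\mathcal{OL}^2}}\,\psi^j_{\uta,\alpha,\ell-\alpha}(\uz).
\]
I would then use Proposition~\ref{prop equalities psij inprod} to replace $\psi^j_{\uta,\alpha,\ell-\alpha}$ by $M[\psi_{\uta,\alpha,\ell-\alpha}]$ in the numerator (the factor $\uz^j$ contributes a trivial cancellation $(\uom^j)^\dagger\uom^j=1$ on $\mS^{m-1}$) and read $\|\psi^j_{\uta,\alpha,\ell-\alpha}\|^2_{\mathcal{OL}^2}=(\lambda^\alpha_{\ell-\alpha})^{-1}$ directly from the proof of Theorem~\ref{Thm coeff mono hua}. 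This reduces the projection to a linear combination of terms of the form $\lambda^\alpha_{\ell-\alpha}M[\psi_{\uta,\alpha,\ell-\alpha}](\uz)$ paired against $M_\ell(\uom)$ on the sphere.

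Applying $\tilde R$ and interchanging the Stiefel integration with the remaining integral over $\mS^{m-1}$, Proposition~\ref{zonal equality stiefel integral} converts each summand's Stiefel integral into $\gamma_{\alpha,\ell-\alpha}\cC_\ell(\uz,\uom)$, yielding
\[
\tilde R[\cM_{\uta,j}[\uz^j M_\ell]](\uz)=\frac{\pi\,\uz^j}{A_m A_{m-1}}\Bigl(\sum_{\alpha=0}^{\ell}\lambda^\alpha_{\ell-\alpha}\gamma_{\alpha,\ell-\alpha}\Bigr)\int_{\mS^{m-1}}\cC_\ell(\uz,\uom)M_\ell(\uom)\,dS(\uom).
\]
The reproducing property of the zonal spherical monogenic $\cC_\ell$ (a consequence of Lemma~\ref{Unicity zonal mono} together with the Gegenbauer identity (\ref{zonal gegenbauer identity})) then evaluates the remaining integral as a scalar multiple of $M_\ell(\uz)$. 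Since the outer factor $\uz^j$ rides through untouched, the result takes the form $\vartheta_{j,\ell}\uz^j M_\ell(\uz)$ with $\vartheta_{j,\ell}$ manifestly independent of $j$.

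The main obstacle is to check that this scalar equals $\tfrac{(\ell+1)!(m-2)!}{2(\ell+m-2)!}$. A pleasing feature is that when one substitutes the explicit formulas of Theorem~\ref{Thm coeff mono hua} and Proposition~\ref{prop comp gam}, the product $\lambda^\alpha_{\ell-\alpha}\gamma_{\alpha,\ell-\alpha}$ has most of its Gamma factors telescope and becomes independent of $\alpha$ within each parity class; thus $\sum_{\alpha=0}^\ell\lambda^\alpha_{\ell-\alpha}\gamma_{\alpha,\ell-\alpha}$ collapses to a short closed expression depending only on $\ell$ and the parities $\lfloor\ell/2\rfloor,\lceil\ell/2\rceil$. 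Combining this with the normalization constant of the zonal reproducing formula, extractable from $C_\ell^{m/2-1}(1)=\tfrac{(m-2)_\ell}{\ell!}$ via (\ref{zonal gegenbauer identity}), and performing a final Gamma-function rearrangement produces the claimed value of $\vartheta_{j,\ell}$.
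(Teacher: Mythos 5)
Your route for $n=j$ is, at heart, the paper's route: convert the Stiefel integral of $M[\psi_{\uta,\alpha,k}]M[\psi_{\uta,\alpha,k}]^\dagger$ into $\gamma_{\alpha,k}\cC_{\alpha+k}$ via Proposition~\ref{zonal equality stiefel integral}, invoke the reproducing property of the zonal spherical monogenics, and observe that $\lambda_k^\alpha\gamma_{\alpha,k}$ collapses to a constant depending only on $\alpha+k$. That part of the plan is sound and matches the paper.

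There are, however, two genuine issues in the remaining steps. First, for $n\neq j$ you argue that the $\theta$-integration ``forces equal total degrees.'' It does not: for a nonzero odd integer $p$, $\int_0^\pi e^{ip\theta}\,d\theta = 2i/p\neq 0$, so the $\theta$-integral by itself only kills \emph{even} nonzero degree differences. To close the gap you must show that when the degree difference is odd, the spherical integral vanishes — this requires a parity argument combining $\uom^{j+n}\in\{\pm1,\pm\uom\}$ with the Fischer orthogonality $\cM_{\ell+1}\perp\uom\,\cM_\ell$ on $\mS^{m-1}$, a case not literally covered by Remark~\ref{rem::sphere inprod 0 mono} (which is stated for $k\neq\ell$ only). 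Your conclusion $\cM_{\uta,j}[\uz^n M_\ell]=0$ is in fact correct (and is stronger than what the paper shows, since the paper only establishes the vanishing after composing with $\tilde R$), but the justification as written has a hole.

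Second, and more seriously, the Hilbert-space projection formula $\cM_{\uta,j}[f]=\sum_\alpha\frac{\langle\psi^j_{\uta,\alpha,\ell-\alpha},f\rangle}{\|\psi^j_{\uta,\alpha,\ell-\alpha}\|^2}\psi^j_{\uta,\alpha,\ell-\alpha}$ with $\|\psi^j\|^2=(\lambda^\alpha_{\ell-\alpha})^{-1}$ is not valid here. The $\mathcal{OL}^2$ inner product is $\mC_m$-valued and noncommutative, so the scalar coefficient cannot be pulled to the left of $\psi^j$ in a right $\mC_m$-module; and in any case $\lambda_k^\alpha$ is \emph{not} the reciprocal of the scalar norm — the proof of Theorem~\ref{Thm coeff mono hua} gives $\langle M[\psi_{\uta,\alpha,k}],M[\psi_{\uta,\alpha,k}]\rangle_{\mathcal{OL}^2}=\tfrac{1}{4\lambda_k^\alpha}\,\utd\uta$, a genuine bivector-plus-scalar, with scalar part $\tfrac{1}{2\lambda_k^\alpha}$. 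The paper avoids all of this by working directly with the integral representation $\cM_{\uta,j}[f]=\int K^j(\uz,e^{-i\theta}\uom)f(e^{i\theta}\uom)\,dS\,d\theta$, where $K^{j,\alpha}=\uz^j L^\alpha(\uz,e^{-i\theta}\uom)(e^{i\theta}\uom)^{-j}$ and $L^\alpha=\sum_k\lambda_k^\alpha M[\psi_{\uta,\alpha,k}](\uz)\,M[\psi_{\uta,\alpha,k}](e^{i\theta}\uom)^\dagger$ already carries the Clifford-ordering correctly, and the $\lambda_k^\alpha$ are exactly the kernel normalizations from Theorem~\ref{Thm coeff mono hua}. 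Pushing $\tilde R$ through this kernel via Proposition~\ref{zonal equality stiefel integral} then gives the sum $\pi A_m\left(\sum_{2s+k=\ell}\lambda_k^{2s}\gamma_{2s,k}+\sum_{2s+1+k=\ell}\lambda_k^{2s+1}\gamma_{2s+1,k}\right)$ cleanly, without any division by Clifford-valued norms. You should replace your projection expansion by the kernel form to make the $n=j$ computation watertight.
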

\begin{proof}
We have

\begin{align*}
\tilde{R}[\cM_{\uta,j}(\uz^n M_{\ell})] &= \tilde{R}\left[\int_{\mS^{m-1}}\int_0^{\pi} K^j(\uz,e^{-i\theta}\uom) (e^{i\theta}\uom)^n M_{\ell}(e^{i\theta}\omega)dS(\uom)d\theta\right]\\
&=\tilde{R}\left[\int_{\mS^{m-1}}\int_0^{\pi} \sum_{\alpha\in\mN}\uz^j L^\alpha(z,e^{-i\theta}\omega) (e^{i\theta}\uom)^{-j}(e^{i\theta}\uom)^n M_{\ell}(e^{i\theta}\omega)dS(\uom)d\theta\right]\\
&=\int_{\mS^{m-1}}\int_0^{\pi} \sum_{\alpha\in\mN}\uz^j \tilde{R}\left[L^\alpha(z,e^{-i\theta}\omega)\right] (e^{i\theta}\uom)^{n-j} M_{\ell}(e^{i\theta}\omega)dS(\uom)d\theta.
\end{align*}

Using Proposition \ref{zonal equality stiefel integral}, we get

\begin{align*}
\tilde{R}\left[L^\alpha(z,e^{-i\theta}\omega)\right]&= \tilde{R}\left[\sum_{k=0}^\infty \lambda_k^{\alpha} M[\psi_{\uta,\alpha,k}](z)M[\psi_{\uta,\alpha,k}](e^{i\theta}\uom)^{\dagger}\right]\\
&=\sum_{k=0}^\infty \lambda_k^{\alpha} \tilde{R}\left[M[\psi_{\uta,\alpha,k}](z)M[\psi_{\uta,\alpha,k}](e^{i\theta}\uom)^{\dagger}\right]\\
&= \sum_{k=0}^\infty \lambda_k^{\alpha}\gamma_{\alpha,k}\cC_{\alpha+k}(\uz, e^{-i\theta }\uom).
\end{align*}
Hence

\begin{align}
\tilde{R}[\cM_{\uta,j}(\uz^j M_{\ell})]=&\uz^j\sum_{\alpha\in\mN}\sum_{k=0}^\infty\int_0^{\pi} e^{i\theta(n+\ell-j-\alpha-k)}d\theta \label{Dual of mono}\\
&\times\int_{\mS^{m-1}}  \lambda_k^{\alpha}\gamma_{\alpha,k}\cC_{\alpha+k}(\uz, \uom)\uom^{n-j}M_{\ell}(\omega)dS(\uom).\nonumber
\end{align}
Using Remark \ref{rem::sphere inprod 0 mono}, we get that (\ref{Dual of mono}) is non-zero if and only if $n=j$. If $n=j$, we can use the reproducing properties of the zonal spherical monogenics so that

\begin{align*}
\tilde{R}[\cM_{\uta,j}(\uz^j M_{\ell})] &= \uz^j\pi A_m \left(\sum_{2s+k=\ell} \lambda_k^{2s}\gamma_{2s,k} + \sum_{2s+1+k=\ell}\lambda_k^{2s+1}\gamma_{2s+1,k}\right) M_{\ell}(z)\\
=& \uz^j\pi A_m \left(\sum_{2s+k=\ell} \frac{1}{\pi A_m}\frac{(2s+k)!(m-2)!}{2(2s+k+m-2)!}\right.\\
& + \left.\sum_{2s+1+k=\ell}\frac{1}{\pi A_m}\frac{(2s+k+1)!(m-2)!}{2(2s+k+m-1)!}\right) M_{\ell}(z)\\
=&\uz^j(\ell+1) \frac{\ell!(m-2)!}{2(\ell+m-2)!}M_{\ell}(z)\\
=&\uz^j \frac{(\ell+1)!(m-2)!}{2(\ell+m-2)!}M_{\ell}(z).
\end{align*}
\end{proof}

Now using the gamma operator $\Gamma_{\uz} = -\uz\wedge\upz$ we get

\begin{align*}
\Gamma_{\uz} \left(\uz^{2j} M_{\ell}(\uz)\right) &= -\ell \uz^{2j} M_{\ell}(\uz),\\
\Gamma_{\uz} \left(\uz^{2j+1} M_{\ell}(\uz)\right) &= (\ell+m-1) \uz^{2j+1} M_{\ell}(\uz).
\end{align*}
Using this result we can write $\vartheta_{j,l}$ calculated in Theorem \ref{thm::dual of mono} as an operator:

\begin{align*}
\vartheta_{2j,\ell} &= \vartheta_{2j,-\Gamma_{\uz}},\\
\vartheta_{2j+1,\ell} &= \vartheta_{2j+1,\Gamma_{\uz}-m+1}.
\end{align*}
This yields an inversion formula for the monogenic Hua-Radon transform for functions of the type $\uz^j M_{\ell}(\uz)$ given in the following Theorem:

\begin{theorem}\label{rem::total monogenic}
For any $j,\ell\in \mN$, we have
\begin{align*}
\uz^{2j} M_{\ell}(\uz) &= \vartheta_{2j,-\Gamma_{\uz}}^{-1}\tilde{R}[\cM_{\uta, 2j}(\uz^{2j} M_{\ell}(\uz))],\\
\uz^{2j+1} M_{\ell}(\uz) &= \vartheta_{2j+1,\Gamma_{\uz}-m+1}^{-1}\tilde{R}[\cM_{\uta, 2j+1}(\uz^{2j+1} M_{\ell}(\uz))].
\end{align*}

If we were to consider a \emph{total monogenic Hua-Radon transform} defined as the direct sum

\[
\oplus_{j=0}^{\infty} \cM_{\uta,j}: \mathcal{OL}^2(LB(0,1) \to \bigoplus_{j=0}^{\infty}\bigoplus_{\alpha\in\mN} \mathfrak{M}^{j,\alpha}(\uta):f\mapsto \sum_{j=0}^{\infty} \cM_{\uta,j}(f)
\]
we can define a inversion operator for all $\mathcal{OL}^2(LB(0,1))$ as

\[
\bigoplus_{j=0}^{\infty}\left(\vartheta_{2j,-\Gamma_{\uz}}^{-1}\tilde{R}[\cM_{\uta, 2j}(\cdot)] + \vartheta_{2j+1,\Gamma_{\uz}-m+1}^{-1}\tilde{R}[\cM_{\uta, 2j+1}(\cdot)]\right).
\]
\end{theorem}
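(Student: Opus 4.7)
The proof is essentially a direct consequence of Theorem \ref{thm::dual of mono} once we interpret the $\ell$-dependent constant $\vartheta_{j,\ell}$ operator-theoretically via the Gamma operator.

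My plan is to start from the identity $\tilde{R}[\cM_{\uta, j}(\uz^j M_{\ell}(\uz))] = \vartheta_{j,\ell} \uz^j M_{\ell}(\uz)$ already established in Theorem \ref{thm::dual of mono} and observe that $\vartheta_{j,\ell}$ is a nonzero rational function of $\ell$ (with explicit formula $(\ell+1)!(m-2)!/[2(\ell+m-2)!]$), so in particular it is invertible. To reinterpret this scalar as an operator, I would use the two Gamma-operator identities listed just before the theorem: on a term of the form $\uz^{2j} M_{\ell}(\uz)$ the operator $-\Gamma_{\uz}$ acts as multiplication by $\ell$, while on a term $\uz^{2j+1}M_{\ell}(\uz)$ the operator $\Gamma_{\uz}-m+1$ also acts as multiplication by $\ell$. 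Substituting these into the rational function $\vartheta_{2j,\cdot}$ and $\vartheta_{2j+1,\cdot}$ respectively therefore reproduces the correct numerical factor on each basis element, so the operators $\vartheta_{2j,-\Gamma_{\uz}}$ and $\vartheta_{2j+1,\Gamma_{\uz}-m+1}$ are well defined on these homogeneous monogenic building blocks and have well-defined inverses there. Composing with $\tilde{R}[\cM_{\uta,j}(\cdot)]$ and inverting then immediately gives the two inversion identities.

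For the second part, I would first justify the definition of the total monogenic Hua-Radon transform. Any $f\in\mathcal{OL}^2(LB(0,1))$ admits an expansion into homogeneous parts $P_k(\uz)$, and by the Fischer decomposition (\ref{fischer mono}) each $P_k$ splits uniquely as $P_k(\uz)=\sum_{j=0}^{k}\uz^{j}M_{k-j}(\uz)$. The space $\mathfrak{M}^{j,\alpha}(\uta)$ is precisely adapted to the summand $\uz^{j}(\text{monogenic of degree }\alpha+k)$, and the direct-sum decomposition $\bigoplus_{j}\bigoplus_{\alpha}\mathfrak{M}^{j,\alpha}(\uta)$ therefore corresponds to the $(j,\text{degree})$-bigrading coming from (\ref{fischer mono}). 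In particular, on each building block $\uz^{n}M_{\ell}(\uz)$ the projection $\cM_{\uta,j}$ kills the term unless $n=j$, and the sum $\oplus_{j=0}^{\infty}\cM_{\uta,j}$ is just the identity on this bigrading, hence well-defined and injective.

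Finally, I would apply the first part of the theorem termwise. Writing $f=\sum_{n,\ell}\uz^{n}M_{\ell,n}(\uz)$ (where $M_{\ell,n}$ is the monogenic of degree $\ell$ appearing in position $n$ in the Fischer expansion of the degree-$(n+\ell)$ component of $f$), Theorem \ref{thm::dual of mono} tells us that $\tilde{R}[\cM_{\uta,j}(\cdot)]$ annihilates every summand except the ones with $n=j$, and reproduces those (up to the factor $\vartheta_{j,\ell}$) which is inverted by $\vartheta_{2j,-\Gamma_{\uz}}^{-1}$ or $\vartheta_{2j+1,\Gamma_{\uz}-m+1}^{-1}$ according to the parity of $j$. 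Summing over $j$ recovers $f$, producing the asserted inversion for $\oplus_{j=0}^{\infty}\cM_{\uta,j}$.

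The only delicate point I anticipate is purely formal: making sure the operators $\vartheta_{2j,-\Gamma_{\uz}}^{-1}$ and $\vartheta_{2j+1,\Gamma_{\uz}-m+1}^{-1}$ are densely defined and that the termwise series converges in $\mathcal{OL}^2(LB(0,1))$. Since the Fischer components of $f$ are mutually orthogonal and the inversion reduces to scalar multiplication on each component, this boils down to verifying that the operator acts as the identity on each finite-dimensional Fischer summand, which it does by construction; no convergence difficulties beyond those already present for $f$ itself arise.
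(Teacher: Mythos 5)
Your argument takes the same route as the paper: the two inversion identities follow immediately from Theorem \ref{thm::dual of mono} together with the observation, recorded just before the theorem, that $-\Gamma_{\uz}$ acts as multiplication by $\ell$ on $\uz^{2j}M_\ell(\uz)$ and $\Gamma_{\uz}-m+1$ acts as multiplication by $\ell$ on $\uz^{2j+1}M_\ell(\uz)$; the full inversion formula is then obtained by expanding $f$ via the Fischer decomposition and applying this termwise. Your first and third paragraphs carry this out correctly, and your closing remark about convergence reducing to scalar multiplication on each homogeneous Fischer summand is reasonable.

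However, your second paragraph contains a genuine misstatement. You claim that ``on each building block $\uz^n M_\ell(\uz)$ the projection $\cM_{\uta,j}$ kills the term unless $n=j$'' and that ``$\oplus_{j=0}^\infty\cM_{\uta,j}$ is just the identity.'' Neither is established by the paper, and the latter is false: each $\cM_{\uta,j}$ is an orthogonal projection onto the proper subspace $\bigoplus_\alpha\mathfrak{M}^{j,\alpha}(\uta)$ generated by the $\uta$-dependent basis $\psi^j_{\uta,\alpha,k}$, which is a small $\uta$-dependent slice of the Fischer-index-$j$ functions, not all of them. If the total transform were the identity there would be nothing to invert and the theorem would be vacuous. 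What Theorem \ref{thm::dual of mono} actually proves is the weaker statement that the \emph{composition} $\tilde{R}[\cM_{\uta,j}(\cdot)]$ annihilates $\uz^n M_\ell(\uz)$ for $n\neq j$; it is this, not a kernel property of $\cM_{\uta,j}$ alone, that makes your termwise argument in the third paragraph go through. Since that paragraph already invokes Theorem \ref{thm::dual of mono} correctly, your conclusion stands, but the second paragraph should be rewritten so as not to assert properties that the paper neither proves nor uses and that would, if taken literally, trivialize the result.
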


\section{Conclusions}
\setcounter{equation}{0}

In Theorem \ref{Thm coeff mono hua} we completed the computations of the kernel of the monogenic Hua-Radon transform. Following the techniques used to invert the Szeg\H o-Radon transform in \cite{Szego}, we obtained a projection operator mapping a holomorphic function onto the term of the form $\uz^j M_k(\uz)$ in its Fischer decomposition. This process leads to a full inversion formula for the total monogenic Hua-Radon transform defined in Theorem \ref{rem::total monogenic}.

\newpage
\appendix
\section{Computation of $\lambda_{k}^{\alpha}$ from Theorem \ref{Thm coeff mono hua} }\label{appendix A}
\setcounter{equation}{0}
In order to determine the coefficients $\lambda_k^{\alpha}$ from Proposition \ref{Thm coeff mono hua} as a closed formula, we first look at the following sums:

\begin{align*}
\sum_{l=0}^s  (-1)^l\frac{\Gamma(2s+k-l+1)}{\Gamma(l+1)\Gamma(s-l+1)\Gamma(s+k-l+1)} &= \sum_{l=0}^s  (-1)^l\frac{(2s+k-l)!}{l!(s-l)!(s+k-l)!}\frac{s!}{s!}\\
&= \sum_{l=0}^{s} (-1)^l \left(\begin{matrix}
s\\
l
\end{matrix}\right)\left(\begin{matrix}
2s+k-l\\
s+k-l
\end{matrix}\right),\\
\sum_{l=0}^s  (-1)^l\frac{\Gamma(2s+k-l+2)}{\Gamma(l+1)\Gamma(s-l+1)\Gamma(s+k-l+2)} &= \sum_{l=0}^s  (-1)^l\frac{(2s+k-l+1)!}{l!(s-l)!(s+k-l+1)!}\frac{s!}{s!}\\
&= \sum_{l=0}^{s} (-1)^l \left(\begin{matrix}
s\\
l
\end{matrix}\right)\left(\begin{matrix}
2s+k-l+1\\
s+k-l+1
\end{matrix}\right).
\end{align*}
Note that the last sum is just the first sum by shifting $k$ by $1$. This leads to the following:

\begin{proposition}\label{prop pascal}
For each $k$ we have

\begin{align}\label{first sum}
\sum_{l=0}^{s} (-1)^l \left(\begin{matrix}
s\\
l
\end{matrix}\right)\left(\begin{matrix}
2s+k-l\\
s+k-l
\end{matrix}\right) &= 1.
\end{align}
\end{proposition}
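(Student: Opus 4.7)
The identity to prove is $\sum_{l=0}^{s}(-1)^l\binom{s}{l}\binom{2s+k-l}{s+k-l}=1$, and my plan is to attack it via generating functions. First I would rewrite $\binom{2s+k-l}{s+k-l}=\binom{2s+k-l}{s}$ using the symmetry of binomial coefficients, so that the right factor becomes $[x^s](1+x)^{2s+k-l}$, i.e.\ the coefficient of $x^s$ in $(1+x)^{2s+k-l}$. This is valid since $2s+k-l\geq s$ for all $0\leq l\leq s$ and $k\geq 0$.

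Then I would pull the coefficient extractor $[x^s]$ outside the sum, factor out $(1+x)^{s+k}$, and use the binomial theorem on what remains:
\[
\sum_{l=0}^{s}(-1)^l\binom{s}{l}\binom{2s+k-l}{s+k-l}
= [x^s]\,(1+x)^{s+k}\sum_{l=0}^{s}(-1)^l\binom{s}{l}(1+x)^{s-l}
= [x^s]\,(1+x)^{s+k}\bigl((1+x)-1\bigr)^s.
\]
Since $((1+x)-1)^s=x^s$, this reduces to $[x^s]\,x^s(1+x)^{s+k}=[x^0](1+x)^{s+k}=1$, which is exactly the desired value.

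The argument is short and contains no real obstacle; the only point to be careful about is verifying that the coefficient-extraction representation of $\binom{2s+k-l}{s}$ holds uniformly in $l$ on the range of summation, which it does because $s+k-l\geq 0$. As an alternative (should one prefer a purely combinatorial route), the same identity follows by induction on $s$ using Pascal's rule $\binom{2s+k-l}{s+k-l}=\binom{2s+k-l-1}{s+k-l-1}+\binom{2s+k-l-1}{s+k-l}$ together with the recursion $\binom{s}{l}=\binom{s-1}{l-1}+\binom{s-1}{l}$, but the generating-function proof is the cleanest and avoids bookkeeping over the index shifts in $k$ that appear elsewhere in the paper.
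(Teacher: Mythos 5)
Your generating-function proof is correct, and it is genuinely different from the paper's argument. The paper proceeds by repeated application of Pascal's rule: it splits $\binom{s}{l}=\binom{s-1}{l}+\binom{s-1}{l-1}$, reorganizes the resulting telescoping difference back into a single binomial coefficient, and observes that the original sum equals the same sum with $(s,k)$ replaced by $(s-1,k+1)$; iterating this down to $s=0$ leaves $\binom{0}{0}\binom{k'}{k'}=1$. Your approach instead rewrites $\binom{2s+k-l}{s+k-l}=\binom{2s+k-l}{s}=[x^s](1+x)^{2s+k-l}$, factors out $(1+x)^{s+k}$, and collapses the $l$-sum with the binomial theorem to $\bigl((1+x)-1\bigr)^s=x^s$, giving $[x^s]\,x^s(1+x)^{s+k}=1$ in one stroke. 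Both are valid; the paper's proof is elementary and self-contained but requires careful bookkeeping of the simultaneous shift in $s$ and $k$, while yours is shorter and more transparent, at the modest cost of invoking the coefficient-extraction formalism. Your side remark that the $[x^s]$-representation of $\binom{2s+k-l}{s}$ holds because $s+k-l\geq0$ on the summation range is correct (and in fact the representation holds regardless, since for $2s+k-l<s$ both sides vanish), so there is no gap.
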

\begin{proof}
First of all we recall Pascal's formula:

\[
\left(\begin{matrix}
n\\
j
\end{matrix}\right) = \left(\begin{matrix}
n-1\\
j
\end{matrix}\right)+\left(\begin{matrix}
n-1\\
j-1
\end{matrix}\right)
\]
Hence for the sum in (\ref{first sum}) we get

\begin{align}
\sum_{l=0}^{s} (-1)^l \left(\begin{matrix}
s\\
l
\end{matrix}\right)\left(\begin{matrix}
2s+k-l\\
s+k-l
\end{matrix}\right) =& \left(\begin{matrix}
s\\
0
\end{matrix}\right)\left(\begin{matrix}
2s+k\\
s+k
\end{matrix}\right)  + (-1)^s\left(\begin{matrix}
s\\
s
\end{matrix}\right)\left(\begin{matrix}
s+k\\
k
\end{matrix}\right)\nonumber\\
&+ \sum_{l=1}^{s-1} (-1)^l \left(\left(\begin{matrix}
s-1\\
l
\end{matrix}\right)
+\left(\begin{matrix}
s-1\\
l-1
\end{matrix}\right)\right)\left(\begin{matrix}
2s+k-l\\
s+k-l
\end{matrix}\right)\nonumber\\
=& \left(\begin{matrix}
s-1\\
0
\end{matrix}\right)\left(\begin{matrix}
2s+k\\
s+k
\end{matrix}\right)  - (-1)^{s-1} \left(\begin{matrix}
s-1\\
s-1
\end{matrix}\right)\left(\begin{matrix}
s+k\\
k
\end{matrix}\right)\nonumber\\
&+ \sum_{l=1}^{s-1} (-1)^l \left(\left(\begin{matrix}
s-1\\
l
\end{matrix}\right)
+\left(\begin{matrix}
s-1\\
l-1
\end{matrix}\right)\right)\left(\begin{matrix}
2s+k-l\\
s+k-l
\end{matrix}\right)\nonumber\\
=&\sum_{l=0}^{s-1} (-1)^l \left(\begin{matrix}
s-1\\
l
\end{matrix}\right)\left(
\left(\begin{matrix}
2s+k-l\\
s+k-l
\end{matrix}\right)-\left(\begin{matrix}
2s+k-l-1\\
s+k-l-1
\end{matrix}\right)\right)\nonumber\\
=& \sum_{l=0}^{s-1} (-1)^l \left(\begin{matrix}
s-1\\
l
\end{matrix}\right)\left(\begin{matrix}
2s+k-l-1\\
s+k-l
\end{matrix}\right) \label{first sum.2}.
\end{align}
Hence we see that (\ref{first sum.2}) is equal to sum (\ref{first sum}) under the transform $s\mapsto s-1$ and $k\mapsto k+1$. Hence repeating this process allows us to lower the upper index of the sum, so that we end up with
\begin{align*}
\sum_{l=0}^{s} (-1)^l \left(\begin{matrix}
s\\
l
\end{matrix}\right)\left(\begin{matrix}
2s+k-l\\
s+k-l
\end{matrix}\right)=& \sum_{l=0}^{s-s} (-1)^l \left(\begin{matrix}
s-s\\
l
\end{matrix}\right)\left(\begin{matrix}
2s+k-l-s\\
s+k-l
\end{matrix}\right)\\
=&\left(\begin{matrix}
0\\
0
\end{matrix}\right)\left(\begin{matrix}
s+k-0\\
s+k-0
\end{matrix}\right)\\
=& 1.
\end{align*}
\end{proof}

Hence by Proposition \ref{prop pascal} we have

\begin{align*}
(\lambda_k^{2s})^{-1} =&\frac{8\pi^{\frac{m}{2}+1}s!(s+k)!}{\Gamma(\frac{m}{2}+2s+k)}\left( 1 - \sum_{l=0}^{s-1}  (-1)^l \frac{s}{\frac{m}{2}+2s+k-l-1} \left(\begin{matrix}
s-1\\
l
\end{matrix}\right)\left(\begin{matrix}
2s+k-l\\
s+k-l
\end{matrix}\right)\right),\\
(\lambda_k^{2s+1})^{-1} =&\frac{32\pi^{\frac{m}{2}+1}s!(s+k+1)!}{\Gamma(\frac{m}{2}+2s+k+1)}\left( 1 - \sum_{l=0}^{s}  (-1)^l \frac{s+1}{\frac{m}{2}+2s+k-l} \left(\begin{matrix}
s\\
l
\end{matrix}\right)\left(\begin{matrix}
2s+k-l+1\\
s+k-l
\end{matrix}\right)\right).
\end{align*}
We see that for the remaining sums, if we shift $s$ by 1 and $k$ by $-1$ in the first sum, we get the second sum. Hence it suffices to calculate the first sum.

\begin{proposition}\label{prop Roy}
We have
\begin{align*}
\sum_{l=0}^{s-1}  (-1)^l \frac{s}{\frac{m}{2}+2s+k-l-1} \left(\begin{matrix}
s-1\\
l
\end{matrix}\right)\left(\begin{matrix}
2s+k-l\\
s+k-l
\end{matrix}\right) &= 1-\frac{\Gamma(s-1+\frac{m}{2})\Gamma(\frac{m}{2}+s+k)}{\Gamma(\frac{m}{2}-1)\Gamma(\frac{m}{2}+2s+k)}\\
&=1-\frac{(\frac{m}{2}-1)_s}{(\frac{m}{2}+s+k)_s}.
\end{align*}
where $(a)_n=a(a+1)\ldots (a+n-1)$ denotes the Pochhammer symbol.
\end{proposition}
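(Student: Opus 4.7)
My plan is to exploit the classical partial-fraction identity
\[
\sum_{l=0}^{n}(-1)^{l}\binom{n}{l}\frac{1}{x-l}=\frac{(-1)^{n}\,n!}{x(x-1)\cdots(x-n)},
\]
together with the standard fact that $\sum_{l=0}^{n}(-1)^{l}\binom{n}{l}f(l)=(-1)^{n}\Delta^{n}f(0)$ equals $(-1)^{n}n!$ times the leading coefficient of $f$ whenever $f$ is a polynomial of degree $n$. Set $a=\frac{m}{2}+2s+k-1$ and $P(l)=\binom{2s+k-l}{s+k-l}=\binom{2s+k-l}{s}$; then $P$ is a polynomial in $l$ of degree $s$ with leading coefficient $(-1)^{s}/s!$, and the left-hand side of the proposition equals $F(s,k)=s\sum_{l=0}^{s-1}(-1)^{l}\binom{s-1}{l}\,P(l)/(a-l)$.

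The crucial step is to perform polynomial division by $a-l$:
\[
P(l)=(a-l)\,Q(l)+P(a),
\]
where $Q$ is a polynomial in $l$ of degree $s-1$; a comparison of leading terms shows that the leading coefficient of $Q$ equals $(-1)^{s+1}/s!$. Dividing by $a-l$ and inserting this decomposition into $F(s,k)$ separates the sum into two pieces:
\[
F(s,k)=s\sum_{l=0}^{s-1}(-1)^{l}\binom{s-1}{l}Q(l)\;+\;s\,P(a)\sum_{l=0}^{s-1}\frac{(-1)^{l}\binom{s-1}{l}}{a-l}.
\]

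By the finite-difference principle the first sum equals $(-1)^{s-1}(s-1)!\cdot(-1)^{s+1}/s!=1/s$, so the first piece contributes exactly $1$. For the second sum the partial-fraction identity with $n=s-1$ and $x=a$ yields $(-1)^{s-1}(s-1)!/(\tfrac{m}{2}+s+k)_{s}$, since $a-s+1=\frac{m}{2}+s+k$. A direct expansion gives
\[
P(a)=\binom{2s+k-a}{s}=\binom{1-\tfrac{m}{2}}{s}=\frac{(-1)^{s}(\tfrac{m}{2}-1)_{s}}{s!},
\]
and combining the three factors, the second piece simplifies to $-(\tfrac{m}{2}-1)_{s}/(\tfrac{m}{2}+s+k)_{s}$. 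The Pochhammer-to-Gamma conversion then delivers the claimed identity.

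The work here is essentially bookkeeping rather than conceptual: the main task is to track all the $(-1)^{s\pm1}$ signs arising from the finite-difference evaluation, from $P(a)=\binom{1-m/2}{s}$, and from the partial-fraction identity, so that they combine to leave exactly a single minus sign in front of the Pochhammer quotient. It is worth emphasising that no hypergeometric summation formula (Saalsch\"utz, Whipple, etc.) is required: the single linear-in-$l$ factor $1/(a-l)$ reduces the problem to the two elementary identities above.
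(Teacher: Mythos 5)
Your proof is correct, and it takes a genuinely different and more elementary route than the paper. The paper rewrites the sum as a ${}_3F_2$ evaluated at $1$, applies a ${}_3F_2$ transformation quoted from an external reference (its equation (\ref{Hypergeom result})), peels off one extra term, and finishes with Gauss's ${}_2F_1$ summation theorem and the Chu--Vandermonde identity. You instead set $a=\tfrac{m}{2}+2s+k-1$ and $P(l)=\binom{2s+k-l}{s}$, divide $P(l)=(a-l)Q(l)+P(a)$, and handle the two resulting pieces with two classical facts: the finite-difference principle, which evaluates $\sum_{l}(-1)^l\binom{s-1}{l}Q(l)$ from the leading coefficient $(-1)^{s+1}/s!$ of $Q$ and yields exactly $1$ after multiplying by $s$; and the partial-fraction identity $\sum_{l=0}^{n}(-1)^l\binom{n}{l}/(x-l) = (-1)^n n!/\bigl(x(x-1)\cdots(x-n)\bigr)$, which, since $a-s+1=\tfrac{m}{2}+s+k$, produces $(\tfrac{m}{2}+s+k)_s$ in the denominator. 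Together with $P(a)=\binom{1-m/2}{s}=(-1)^s(\tfrac{m}{2}-1)_s/s!$, the signs combine to the single minus sign required, giving $1-(\tfrac{m}{2}-1)_s/(\tfrac{m}{2}+s+k)_s$. I verified each step (leading coefficients, sign bookkeeping, the telescoping of Pochhammer factors), and they all check out, including the degenerate case $s=0$. What your approach buys is self-containment and transparency: it needs no hypergeometric machinery beyond Pochhammer notation, and it exposes why the sum has the form ``$1$ minus a Pochhammer ratio'' --- the $1$ comes from the polynomial quotient $Q$, the Pochhammer ratio from the remainder $P(a)$. What the paper's approach buys is a systematic pattern that extends readily to the companion sum in Proposition \ref{even som 2}, where the same ${}_3F_2$ transformation is invoked again.
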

\begin{proof}
First of all, we see that we can rewrite this series as a multiple of a hypergeometric function ${}_3F_2$:

\begin{align*}
\sum_{l=0}^{s-1}  (-1)^l &\frac{s}{\frac{m}{2}+2s+k-l-1} \left(\begin{matrix}
s-1\\
l
\end{matrix}\right)\left(\begin{matrix}
2s+k-l\\
s+k-l
\end{matrix}\right)\\
=& \frac{s\Gamma(2s+k+1)}{\left(\frac{m}{2}+2s+k-1\right)\Gamma(s+k+1)\Gamma(s+1)}\\
&\times\underbrace{{}_3F_2([1-s,-k-s,-\frac{m}{2}-2s-k+1],[-k-2s,-\frac{m}{2}-2s-k+2];1)}_{:=\phi}.
\end{align*}
In order to determine the value of this hypergeometric function, we will use the following result from \cite[p873, appendix, equation (II)]{hypergeom}:

\begin{align}\label{Hypergeom result}
{}_3F_2([a,b,-N],[d,e]);1) =& (-1)^N \frac{(1+a-d-N-e+b)_N}{(d)_N}\\
&\times{}_3F_2([e-a,e-b,-N],[e-b-a+d,e]);1).\nonumber
\end{align}
If we now apply this to our hypergeometric series with

\begin{align*}
N &= s-1 & b &= -k-s & a = -\frac{m}{2}-2s-k+1\\
d &= -\frac{m}{2}-2s-k+2 = a+1 & e &= -k-2s = b-N -1,
\end{align*}
we get that 

\begin{align*}
\phi &= (-1)^N \frac{(1)_N}{(a+1)_N}{}_3F_2([b-N-a-1,-N-1,-N],[-N,b-N-1];1)\\
&= \frac{(-1)^N(1)_N}{(a+1)_N} \sum_{j=0}^{N} \frac{(b-N-a-1)_j(-N-1)_j(-N)_j}{(-N)_j(b-N-1)_j j!}\\
&= \frac{(-1)^N(1)_N}{(a+1)_N} \left(\sum_{j=0}^{N+1} \frac{(b-N-a-1)_j(-N-1)_j}{(b-N-1)_j j!} - (-1)^{N+1}\frac{(b-N-a-1)_{N+1}}{(b-N-1)_{N+1}}\right)\\
&= \frac{(-1)^N N!}{(a+1)_N} \left({}_2F_1([-N-1,b-c-N-1],[b-N-1];1) - (-1)^{N+1}\frac{(b-N-a-1)_{N+1}}{(b-N-1)_{N+1}}\right).
\end{align*}
We can now use Gauss hypergeometric theorem \cite[Chapter 15]{NIST} so that we end up with

\begin{align*}
\phi &= \frac{(-1)^N N!}{(a+1)_N} \left(\frac{(a)_{N+1}}{(b-N-1)_{N+1}} - (-1)^{N+1}\frac{(b-N-a-1)_{N+1}}{(b-N-1)_{N+1}}\right)\\
&=\frac{(-1)^N N!}{(b-N-1)_{N+1}} \left(\frac{(a)_{N+1}}{(a+1)_N} - \frac{(1+a-b)_{N+1}}{(a+1)_N} \right)\\
&=\frac{-N!}{(1-b)_{N+1}} \left(a - \frac{(1+a-b)_{N+1}}{(a+1)_N} \right)\\
&=\frac{(s-1)!}{(k+s+1)_{s}} \left(\frac{m}{2}+2s+k-1 + \frac{\left(-\frac{m}{2}-s+2\right)_{s}}{\left(-\frac{m}{2}-2s-k+2\right)_{s-1}} \right)\\
&=\frac{\Gamma(s)\Gamma(s+k+1)}{\Gamma(2s+k+1)} \left(\frac{m}{2}+2s+k-1 - \frac{\left(\frac{m}{2}-1\right)_{s}}{\left(\frac{m}{2}+s+k\right)_{s-1}} \right).
\end{align*}
Thus now our sum becomes

\begin{align*}
\sum_{l=0}^{s-1}  (-1)^l &\frac{s}{\frac{m}{2}+2s+k-l-1} \left(\begin{matrix}
s-1\\
l
\end{matrix}\right)\left(\begin{matrix}
2s+k-l\\
s+k-l
\end{matrix}\right)\\
=& \frac{1}{\left(\frac{m}{2}+2s+k-1\right)}\left(\frac{m}{2}+2s+k-1 - \frac{\left(\frac{m}{2}-1\right)_{s}}{\left(\frac{m}{2}+s+k\right)_{s-1}} \right)\\
=& 1-\frac{(\frac{m}{2}-1)_s}{(\frac{m}{2}+s+k)_s}\\
=& 1-\frac{\Gamma(s-1+\frac{m}{2})\Gamma(\frac{m}{2}+s+k)}{\Gamma(\frac{m}{2}-1)\Gamma(\frac{m}{2}+2s+k)}.
\end{align*}

\end{proof}

\newpage
\section{Computation of the constants $\gamma_{\alpha,k}$ from Proposition \ref{prop comp gam}}\label{appendix B}

Plugging in the explicit expressions for $\mu_{j,\alpha,l}$ (see Lemma \ref{Lemma Fisch decomp M[psi]}), the summations at the end of the proof of Proposition \ref{prop comp gam} become

\begin{align}
\sum_{j=0}^{s}\sum_{l=0}^{s}\mu_{2j}&\mu_{2l} \frac{\Gamma(2s-j-l+k+1)}{\Gamma\left(2s-j-l+k+\frac{m}{2}\right)}\nonumber\\
&=\frac{(s!)^2 ((s+k)!)^2}{\Gamma(\frac{m}{2}+2s+k)^2}\sum_{j=0}^{s} (-1)^{j}\frac{\Gamma(\frac{m}{2}+2s+k-j)}{j!(s-j)!(s+k-j)!}\label{even som 1.1}\\
&\times \sum_{l=0}^{s} (-1)^{l}\frac{\Gamma(\frac{m}{2}+2s+k-l)}{l!(s-l)!(s+k-l)!}\frac{\Gamma(2s-j-l+k+1)}{\Gamma\left(2s-j-l+k+\frac{m}{2}\right)},\label{even som 1.2}\\
\sum_{j=0}^{s-1}\sum_{l=0}^{s}\mu_{2j+1}&\mu_{2l} \frac{\Gamma(2s-j-l+k+1)}{\Gamma\left(2s-j-l+k+\frac{m}{2}\right)}\nonumber\\
&=\frac{(s!)^2 ((s+k)!)^2}{\Gamma(\frac{m}{2}+2s+k)^2}\sum_{j=0}^{s-1} (-1)^{j}\frac{\Gamma(\frac{m}{2}+2s+k-j-1)}{2j!(s-j-1)!(s+k-j)!}\label{even som 2.1}\\
&\times \sum_{l=0}^{s} \frac{(-1)^{l}\Gamma(\frac{m}{2}+2s+k-l)\Gamma(2s-j-l+k+1)}{l!(s-l)!(s+k-l)!\Gamma\left(2s-j-l+k+\frac{m}{2}\right)},\label{even som 2.2}\\
\sum_{j=0}^{s-1}\sum_{l=0}^{s-1}\mu_{2j+1}&\mu_{2l+1} \frac{\Gamma(2s-j-l+k+1)}{\Gamma\left(2s-j-l+k+\frac{m}{2}\right)}\nonumber\\
&=\frac{(s!)^2 ((s+k)!)^2}{\Gamma(\frac{m}{2}+2s+k)^2}\sum_{j=0}^{s-1} (-1)^{j}\frac{\Gamma(\frac{m}{2}+2s+k-j-1)}{2j!(s-j-1)!(s+k-j)!}\label{even som 3.1}\\
&\times \sum_{l=0}^{s-1} \frac{(-1)^{l}\Gamma(\frac{m}{2}+2s+k-l-1)\Gamma(2s-j-l+k)}{2l!(s-l-1)!(s+k-l)!\Gamma\left(2s-j-l+k-1+\frac{m}{2}\right)}.\label{even som 3.2}
\end{align}

Now note that the right-hand sides of (\ref{even som 1.2}) and (\ref{even som 2.2}) are identical and if we replace $s$ by $s-1$ and $k$ by $k+1$, the right-hand side of (\ref{even som 1.2}) transforms into (\ref{even som 3.2}). Moreover, if we write out the sums in the odd case, the summations with respect to $l$ are equal to (\ref{even som 1.2}) (if necessary change $k$ to $k+1$). Moreover (\ref{even som 2.1}) and (\ref{even som 3.1}) can be achieved by changing $s$ to $s-1$ and $k$ to $k+1$ in (\ref{even som 1.1}) and the same can be done for the remaining sums in the odd case. Hence it suffices to only calculate sum (\ref{even som 1.2}) and (\ref{even som 1.1}). We have the following:

\begin{proposition}\label{even som 2}
\begin{align*}
\sum_{l=0}^{s} (-1)^{l}\frac{\Gamma(\frac{m}{2}+2s+k-l)}{l!(s-l)!(s+k-l)!}\frac{\Gamma(2s-j-l+k+1)}{\Gamma\left(2s-j-l+k+\frac{m}{2}\right)} = 1.
\end{align*}
\end{proposition}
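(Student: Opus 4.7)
My plan is to recognize that, once the Gamma ratios are rewritten as a Pochhammer symbol and a ratio of factorials, the summand becomes (up to the factor $(-1)^{l}\binom{s}{l}/s!$) a polynomial in $l$ of degree exactly $s$. The classical finite-difference identity
\begin{equation*}
\sum_{l=0}^{s}(-1)^{l}\binom{s}{l}P(l)=(-1)^{s}\,s!\,[l^{s}]P(l),
\end{equation*}
valid for every polynomial $P$ of degree at most $s$ (with $[l^{s}]P$ denoting the coefficient of $l^{s}$), will then finish the argument.

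First I would use $1/(l!(s-l)!)=\binom{s}{l}/s!$ to recast the left-hand side as $S=\tfrac{1}{s!}\sum_{l=0}^{s}(-1)^{l}\binom{s}{l}Q(l)$, with
\begin{equation*}
Q(l)=\frac{\Gamma\!\bigl(\tfrac{m}{2}+2s+k-l\bigr)\,\Gamma(2s+k-j-l+1)}{(s+k-l)!\,\Gamma\!\bigl(2s+k-j-l+\tfrac{m}{2}\bigr)}.
\end{equation*}
Applying $\Gamma(a+j)/\Gamma(a)=(a)_{j}$ to the first ratio (with $a=2s+k-j-l+\tfrac{m}{2}$) and $\Gamma(n+1)=n!$ to the second, I rewrite
\begin{equation*}
Q(l)=\bigl(2s+k-j-l+\tfrac{m}{2}\bigr)_{j}\,\prod_{i=1}^{s-j}(s+k-l+i),
\end{equation*}
where both factors are unambiguous in the range $0\le j\le s$, $0\le l\le s$ in which this sum actually arises inside the proof of Proposition \ref{prop comp gam}.

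Now I would observe that the Pochhammer $(2s+k-j-l+\tfrac{m}{2})_{j}$ is a product of $j$ linear factors in $l$, each with coefficient $-1$ in front of $l$, hence a polynomial of degree $j$ with leading coefficient $(-1)^{j}$; and likewise $\prod_{i=1}^{s-j}(s+k-l+i)$ is a polynomial of degree $s-j$ in $l$ with leading coefficient $(-1)^{s-j}$. Multiplying, $Q(l)$ has degree exactly $s$ with leading coefficient $(-1)^{j}(-1)^{s-j}=(-1)^{s}$.

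Plugging into the finite-difference identity gives $\sum_{l=0}^{s}(-1)^{l}\binom{s}{l}Q(l)=(-1)^{s}\,s!\,(-1)^{s}=s!$, whence $S=1$. I do not anticipate any real difficulty here: the whole proposition collapses to a polynomial-degree count, and the only subtle point is checking that the rewriting of $Q(l)$ really produces polynomials of complementary degrees $j$ and $s-j$, which rests on the hypothesis $0\le j\le s$ that is satisfied throughout the proof of Proposition \ref{prop comp gam}.
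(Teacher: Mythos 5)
Your proof is correct, and it takes a genuinely different route than the paper. The paper rewrites the sum as a terminating ${}_3F_2$, applies a transformation identity from Jagannathan--Rajeswari--Raynal--Rao--Van der Jeugt (their equation (II)) to convert it into a ${}_2F_1$, notices that this ${}_2F_1$ truncates at $j \leq s = N$, and then closes with Chu--Vandermonde. You instead pull out $\binom{s}{l}/s!$ and show directly that the remaining factor $Q(l)=\bigl(\tfrac{m}{2}+2s+k-j-l\bigr)_{j}\prod_{i=1}^{s-j}(s+k-l+i)$ is a polynomial of degree exactly $s$ in $l$ with leading coefficient $(-1)^{s}$, so the finite-difference identity $\sum_{l=0}^{s}(-1)^{l}\binom{s}{l}P(l)=(-1)^{s}s!\,[l^{s}]P$ finishes the job. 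Your approach is more elementary and self-contained (it bypasses the external ${}_3F_2$ transformation entirely and reduces the whole computation to a degree count plus the standard $\Delta^{s}$-of-a-polynomial fact), while the paper's route is more mechanical and fits into the same hypergeometric toolkit it uses for the companion sum in Proposition \ref{prop Roy}. The only hypothesis you implicitly lean on --- that $0 \le j \le s$ so that both the Pochhammer and the telescoping factorial ratio are genuine polynomials of the claimed degrees --- is the same restriction the paper invokes when it truncates the ${}_2F_1$ at index $j$, so nothing is lost. Both are valid; yours is arguably cleaner to verify.
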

\begin{proof}
We have

\begin{align*}
\sum_{l=0}^{s} (-1)^{l}\frac{\Gamma(\frac{m}{2}+2s+k-l)}{l!(s-l)!(s+k-l)!}&\frac{\Gamma(2s-j-l+k+1)}{\Gamma\left(2s-j-l+k+\frac{m}{2}\right)} =\\
&\frac{\Gamma(\frac{m}{2}+2s+k)}{s!(s+k)!}\frac{\Gamma(2s-j+k+1)}{\Gamma\left(\frac{m}{2}+2s+k-j\right)} \phi,
\end{align*}
where

\[
\phi = {}_3F_2\left([a,b,-N], [d, e];1\right)
\]
with

\begin{align*}
a &= -s-k & b &= j-k-2s-\frac{m}{2}+1 & N &= s\\
d &= -2s+j-k = a-N+j & e &= -\frac{m}{2}-2s-k+1 = b-j.
\end{align*}
Using (\ref{Hypergeom result}), we have

\begin{align*}
\phi &= (-1)^N \frac{(1+a+b-d-e-N)_N}{(d)_N}{}_3F_2([e-a,e-b,-N],[e-a-b+d,e];1)\\
&= (-1)^N \frac{(1)_N}{(d)_N}{}_3F_2([e-a,-j,-N],[-N,e];1)\\
&= \frac{(-1)^N N!}{(d)_N} \sum_{l=0}^{j} \frac{(e-a)_l(-j)_l(-N)_l}{(-N)_l(e)_l l!}\\
&= \frac{\Gamma(-d+N+1) N!}{\Gamma(-d+1)} \sum_{l=0}^{j} \frac{(e-a)_l(-j)_l}{(e)_l l!}\\
&= \frac{\Gamma(-d+N+1) N!}{\Gamma(-d+1)} {}_2F_1([e-a, -j],[e];1)
\end{align*}
where we used that $j\leq s=N$ and hence the sum will terminate at $j$ instead of $N$. Now using Chu-Vandermonde identity (see \cite{NIST}) we have

\begin{align*}
{}_2F_1([e-a, -j],[e];1)&=\frac{(a)_j}{(e)_j}\\ &= \frac{(-s-k)_j}{(-\frac{m}{2}-2s-k+1)_j}\\
&= \frac{a(a+1)\ldots(a+j-1)}{e(e+1)\ldots(e+j-1)}\\
&= \frac{(-a)(-a-1)\ldots(-a-j+1)}{((-e)(-e-1)\ldots(-e-j+1)}\\
&=\frac{\Gamma(-a+1)\Gamma(-e-j+1)}{\Gamma(-a-j+1)\Gamma(-e+1)}
\end{align*}
as $-a-j+1\geq 0$, $-a\geq 0$, $-e\geq 0$ and $-e-j+1\geq 0$.
Thus for our total sum, we have

\begin{align*}
\sum_{l=0}^{s} (-1)^{l}&\frac{\Gamma(\frac{m}{2}+2s+k-l)}{l!(s-l)!(s+k-l)!}\frac{\Gamma(2s-j-l+k+1)}{\Gamma\left(2s-j-l+k+\frac{m}{2}\right)} \\
=&\frac{\Gamma(\frac{m}{2}+2s+k)}{s!(s+k)!}\frac{\Gamma(2s-j+k+1)}{\Gamma\left(\frac{m}{2}+2s+k-j\right)}\frac{\Gamma(-d+N+1) N!}{\Gamma(-d+1)}\frac{\Gamma(-a+1)\Gamma(-e-j+1)}{\Gamma(-a-j+1)\Gamma(-e+1)}\\
=&\frac{\Gamma(-e+1)}{N!\Gamma(-a+1)} \frac{\Gamma(-d+1)}{\Gamma(-e-j+1)}\frac{\Gamma(-d+N+1) N!}{\Gamma(-d+1)}\frac{\Gamma(-a+1)\Gamma(-e-j+1)}{\Gamma(-a-j+1)\Gamma(-e+1)} \\
=& \frac{\Gamma(-d-N+1)}{\Gamma(-a-j+1)} \\
=& \frac{ \Gamma(-d-N+1)}{\Gamma(-d-N+1)}\\
=& 1.
\end{align*}

\end{proof}

\begin{proposition}\label{even som 1}
The right-hand side of (\ref{even som 1.1}) is equal to
\[
\sum_{j=0}^s (-1)^j\frac{\Gamma\left(\frac{m}{2} + 2s + k - j\right)}{j!\Gamma(s - j + 1)\Gamma(s + k - j + 1)} = \frac{\Gamma\left(\frac{m}{2}+s\right)\Gamma\left(\frac{m}{2}+s + k\right)}{s!(s + k)!\Gamma\left(\frac{m}{2}\right)}.
\]
\end{proposition}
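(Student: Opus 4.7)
My plan is to recognize the sum as a terminating ${}_2F_1$ at argument $1$ and evaluate it via the Chu--Vandermonde identity, which has already been used earlier in the paper.

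First I would pull out the $j=0$ term, namely $\Gamma\!\left(\tfrac{m}{2}+2s+k\right)/(s!(s+k)!)$, as a common factor. Using the standard identities
\[
\frac{s!}{(s-j)!}=(-1)^j(-s)_j, \qquad \frac{(s+k)!}{(s+k-j)!}=(-1)^j(-s-k)_j,
\]
and
\[
\frac{\Gamma\!\left(\tfrac{m}{2}+2s+k-j\right)}{\Gamma\!\left(\tfrac{m}{2}+2s+k\right)}=\frac{(-1)^j}{\left(1-\tfrac{m}{2}-2s-k\right)_j},
\]
the four signs $(-1)^{4j}$ collapse and the remaining quotients assemble into a hypergeometric series. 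The result is
\[
\sum_{j=0}^s (-1)^j\frac{\Gamma\!\left(\tfrac{m}{2}+2s+k-j\right)}{j!\,\Gamma(s-j+1)\,\Gamma(s+k-j+1)}
=\frac{\Gamma\!\left(\tfrac{m}{2}+2s+k\right)}{s!\,(s+k)!}\;{}_2F_1\!\left(-s,\,-s-k;\,1-\tfrac{m}{2}-2s-k;\,1\right),
\]
where the series terminates at $j=s$ because of the factor $(-s)_j$, justifying the truncation at $s$.

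Next I would apply the Chu--Vandermonde identity ${}_2F_1(-n,b;c;1)=(c-b)_n/(c)_n$ with $n=s$, $b=-s-k$, and $c=1-\tfrac{m}{2}-2s-k$, which gives $c-b=1-\tfrac{m}{2}-s$. Thus the ${}_2F_1$ equals
\[
\frac{\left(1-\tfrac{m}{2}-s\right)_s}{\left(1-\tfrac{m}{2}-2s-k\right)_s}.
\]
Each Pochhammer symbol has $s$ factors that are all negative (since we may assume $m\geq 3$), and reversing their order introduces a joint sign $(-1)^s(-1)^s=1$, converting them to
\[
\left(1-\tfrac{m}{2}-s\right)_s=(-1)^s\,\frac{\Gamma\!\left(\tfrac{m}{2}+s\right)}{\Gamma\!\left(\tfrac{m}{2}\right)},\qquad
\left(1-\tfrac{m}{2}-2s-k\right)_s=(-1)^s\,\frac{\Gamma\!\left(\tfrac{m}{2}+2s+k\right)}{\Gamma\!\left(\tfrac{m}{2}+s+k\right)}.
\]

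Substituting these back yields exactly the claimed right-hand side $\Gamma(\tfrac{m}{2}+s)\Gamma(\tfrac{m}{2}+s+k)/\bigl(s!(s+k)!\,\Gamma(\tfrac{m}{2})\bigr)$. There is no real obstacle here beyond careful bookkeeping of signs when flipping Pochhammer products into gamma quotients; the only ingredient needed is Chu--Vandermonde, which is already cited in the paper. The same reformulation strategy was used in Proposition \ref{even som 2}, so the argument parallels that one and could even be presented as a short corollary of the manipulations already carried out in Appendix \ref{appendix B}.
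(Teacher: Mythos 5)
Your proposal is correct and follows essentially the same route as the paper: rewrite the sum as $\dfrac{\Gamma\!\left(\frac{m}{2}+2s+k\right)}{s!\,(s+k)!}\;{}_2F_1\!\left(-s,-s-k;1-\tfrac{m}{2}-2s-k;1\right)$, apply Chu--Vandermonde, and convert the resulting Pochhammer ratio to gamma functions. You simply spell out the intermediate identities that the paper leaves implicit; the logic is identical.
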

\begin{proof}
Using Chu-Vandermonde identity (see \cite{NIST}), we have
\begin{align*}
\sum_{j=0}^s (-1)^j\frac{\Gamma\left(\frac{m}{2} + 2s + k - j\right)}{j!\Gamma(s - j + 1)\Gamma(s + k - j + 1)} =& \frac{\Gamma\left(\frac{m}{2} + 2s + k\right)}{\Gamma(s + 1)\Gamma(s + k + 1)}\\
&\times{}_2F_1\left([-s,-s-k],\left[-\frac{m}{2}-2s-k+1\right];1\right)\\
=&\frac{\Gamma\left(\frac{m}{2} + 2s + k\right)}{\Gamma(s + 1)\Gamma(s + k + 1)}\frac{\left(-\frac{m}{2}-s+1\right)_s}{\left(-\frac{m}{2}-2s-k+1\right)_s}\\
=& \frac{\Gamma\left(\frac{m}{2}+s\right)\Gamma\left(\frac{m}{2}+s + k\right)}{\Gamma(s + 1)\Gamma(s + k + 1)\Gamma\left(\frac{m}{2}\right)}.
\end{align*}
\end{proof}
Now combining the results of Proposition \ref{even som 2} and Proposition \ref{even som 1} we get the following:

\begin{proposition}
The constants $\gamma_{\alpha,k}$ are given by
\begin{align*}
\gamma_{2s,k} &= \frac{s!(s+k)!\Gamma\left(\frac{m}{2}+s-1\right)}{\Gamma\left(\frac{m}{2}+2s+k\right)^2}\Gamma\left(\frac{m}{2}+s + k\right)(m-2)\frac{\Gamma(m-1)\Gamma(2s+k+1)}{\Gamma(2s+k+m-1)},\\
\gamma_{2s+1,k} &= 4\frac{s!(s+k+1)!\Gamma\left(\frac{m}{2}+s-1\right)}{\Gamma\left(\frac{m}{2}+2s+k\right)^2}\Gamma\left(\frac{m}{2}+s + k\right)(m-2)\frac{\Gamma(m-1)\Gamma(2s+k+1)}{\Gamma(2s+k+m-1)}.
\end{align*}
\end{proposition}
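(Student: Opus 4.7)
The plan is to plug the explicit expressions for $\mu_{j,\alpha,k}$ from Lemma \ref{Lemma Fisch decomp M[psi]} into the three double sums (\ref{even som 1.1})--(\ref{even som 3.2}) appearing at the end of the proof of Proposition \ref{prop comp gam}, and then apply in sequence the two summation identities established in Propositions \ref{even som 2} and \ref{even som 1}. As the appendix already observes, the inner sums in (\ref{even som 1.2}), (\ref{even som 2.2}) and (\ref{even som 3.2}) all coincide up to the shift $s\mapsto s-1$, $k\mapsto k+1$, and the outer sums (\ref{even som 1.1}), (\ref{even som 2.1}), (\ref{even som 3.1}) are related in the same way, so only one inner sum and one outer sum need to be evaluated in closed form.

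Concretely, I would first apply Proposition \ref{even som 2} to collapse each inner $l$-sum to $1$ (after the relevant reindexing for the third double sum). This leaves three single sums in $j$, each of the form treated in Proposition \ref{even som 1}. Applying that identity evaluates them via Chu--Vandermonde to a ratio of Gamma functions. Plugging the resulting closed forms back into the expression
\[
\gamma_{2s,k}\,\frac{2s+k+m-2}{m-2}\,C^{m/2-1}_{2s+k}(1) \;=\; 2\Gamma(m/2)\bigl[\,\mathrm{sum}_1 - 4\,\mathrm{sum}_2 + 4\,\mathrm{sum}_3\,\bigr]
\]
from the proof of Proposition \ref{prop comp gam} gives $\gamma_{2s,k}$ after solving for it and rewriting the Gegenbauer value at $1$ via $C^{m/2-1}_{n}(1) = \Gamma(n+m-2)/(n!\,\Gamma(m-2))$, which produces the factor $\Gamma(m-1)\Gamma(2s+k+1)/\Gamma(2s+k+m-1)$ appearing in the stated formula.

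The odd case $\alpha=2s+1$ is handled either directly with the odd-indexed coefficients, or more economically, as noted in the appendix, by performing the shift $s\mapsto s-1$, $k\mapsto k+1$ on the even-case sums; the factor of $4$ multiplying $\gamma_{2s+1,k}$ should then emerge automatically from the change of $\mu$'s between parities in Lemma \ref{Lemma Fisch decomp M[psi]} combined with the contribution of the odd off-diagonal terms $\mu_{2j+1}\mu_{2l+1}$. The real difficulty will not be any individual identity---each reduction is a routine application of Chu--Vandermonde or Pascal's rule---but the bookkeeping of prefactors: the factors of $\tfrac12$ hidden in the odd $\mu$'s, the $-4$ and $+4$ multipliers on sums 2 and 3, the $4\pi^{m/2}/A_m = 2\Gamma(m/2)$ normalisation, and the reindexing $s\mapsto s-1$, $k\mapsto k+1$ must all be tracked simultaneously, and a single slip in a sign or a factor of two will propagate through the final expression.
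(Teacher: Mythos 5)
Your plan reproduces the paper's own proof essentially verbatim: plug in the $\mu_{j,\alpha,k}$ from Lemma \ref{Lemma Fisch decomp M[psi]}, reduce everything to the two closed-form summation identities (Proposition \ref{even som 2} collapses the inner $l$-sum to $1$, Proposition \ref{even som 1} evaluates the outer $j$-sum via Chu--Vandermonde), exploit the shift $s\mapsto s-1$, $k\mapsto k+1$ to avoid recomputing the other sums, and finally divide out $C_{n}^{m/2-1}(1) = \Gamma(n+m-2)/(n!\,\Gamma(m-2))$. This is precisely the paper's route and there is no gap in the method.

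One caution on bookkeeping, since you flag it yourself: do not expect the explicit factor of $4$ displayed in front of $\gamma_{2s+1,k}$ in the Proposition statement to fall out of the parity change in the $\mu$'s. If you actually carry the odd-$\alpha$ calculation through, you land on
\[
\gamma_{2s+1,k} = \frac{s!\,(s+k+1)!\,\Gamma\!\left(\tfrac{m}{2}+s\right)}{\Gamma\!\left(\tfrac{m}{2}+2s+k\right)^2}\,\Gamma\!\left(\tfrac{m}{2}+s+k\right)(m-2)\,\frac{\Gamma(m-1)\,\Gamma(2s+k+2)}{\Gamma(2s+k+m)},
\]
which is the form recorded in Proposition \ref{prop comp gam} and is what the appendix computation itself derives; the version with the prefactor $4$, $\Gamma(\tfrac{m}{2}+s-1)$, $\Gamma(2s+k+1)$ and $\Gamma(2s+k+m-1)$ is not equal to it, as one sees by forming the ratio $\tfrac14\bigl(\tfrac{m}{2}+s-1\bigr)(2s+k+1)/(2s+k+m-1)$, which is not identically $1$. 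So treat that $4$ as a misprint to be corrected rather than a target to hit: trying to coax it out of the odd off-diagonal terms $\mu_{2j+1}\mu_{2l+1}$, as your last paragraph suggests, is where a genuine calculation error would creep in.
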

\begin{proof}
Combining Proposition \ref{even som 2} and Proposition \ref{even som 1} we get

\begin{align*}
\gamma_{2s,k}\frac{2s+k+m-2}{m-2} C_{2s+k}^{\frac{m}{2}-1}(1) =&2\Gamma\left(\frac{m}{2}\right)\frac{s!^2(s+k)!^2}{\Gamma\left(\frac{m}{2}+2s+k\right)^2}\left[\frac{\Gamma\left(\frac{m}{2}+s\right)\Gamma\left(\frac{m}{2}+s + k\right)}{\Gamma(s + 1)\Gamma(s + k + 1)\Gamma\left(\frac{m}{2}\right)}\right.\\
&-4\frac{\Gamma\left(\frac{m}{2}+s-1\right)\Gamma\left(\frac{m}{2}+s + k\right)}{2\Gamma(s)\Gamma(s + k + 1)\Gamma\left(\frac{m}{2}\right)}\\
&+\left.4\frac{\Gamma\left(\frac{m}{2}+s-1\right)\Gamma\left(\frac{m}{2}+s + k\right)}{4\Gamma(s )\Gamma(s + k + 1)\Gamma\left(\frac{m}{2}\right)}\right]\\
=& 2\Gamma\left(\frac{m}{2}\right)\frac{s!^2(s+k)!^2}{\Gamma\left(\frac{m}{2}+2s+k\right)^2}\left[\frac{\Gamma\left(\frac{m}{2}+s\right)\Gamma\left(\frac{m}{2}+s + k\right)}{\Gamma(s + 1)\Gamma(s + k + 1)\Gamma\left(\frac{m}{2}\right)}\right.\\
&\left.-\frac{\Gamma\left(\frac{m}{2}+s-1\right)\Gamma\left(\frac{m}{2}+s + k\right)}{\Gamma(s)\Gamma(s + k + 1)\Gamma\left(\frac{m}{2}\right)}\right]\\
=& 2\Gamma\left(\frac{m}{2}\right)\frac{s!^2(s+k)!^2}{\Gamma\left(\frac{m}{2}+2s+k\right)^2}\frac{\Gamma\left(\frac{m}{2}+s-1\right)\Gamma\left(\frac{m}{2}+s + k\right)}{\Gamma(s)\Gamma(s + k + 1)\Gamma\left(\frac{m}{2}\right)}\\
&\times\left[\frac{\frac{m}{2}+s-1}{s}-1\right]\\
=&\frac{s!(s+k)!}{\Gamma\left(\frac{m}{2}+2s+k\right)^2}\Gamma\left(\frac{m}{2}+s-1\right)\Gamma\left(\frac{m}{2}+s + k\right)(m-2)
\end{align*}
and thus

\begin{align*}
\gamma_{2s,k} &= \frac{s!(s+k)!}{\Gamma\left(\frac{m}{2}+2s+k\right)^2}\Gamma\left(\frac{m}{2}+s-1\right)\Gamma\left(\frac{m}{2}+s + k\right)(m-2) \frac{m-2}{(2s+k+m-2)C_{2s+k}^{\frac{m}{2}-1}(1)}\\
&= \frac{s!(s+k)!\Gamma\left(\frac{m}{2}+s-1\right)}{\Gamma\left(\frac{m}{2}+2s+k\right)^2}\Gamma\left(\frac{m}{2}+s + k\right)(m-2) \frac{m-2}{2s+k+m-2}\frac{\Gamma(m-2)\Gamma(2s+k+1)}{\Gamma(2s+k+m-2)}\\
&= \frac{s!(s+k)!\Gamma\left(\frac{m}{2}+s-1\right)}{\Gamma\left(\frac{m}{2}+2s+k\right)^2}\Gamma\left(\frac{m}{2}+s + k\right)(m-2)\frac{\Gamma(m-1)\Gamma(2s+k+1)}{\Gamma(2s+k+m-1)}.
\end{align*}

Analogously we get for $\alpha = 2s+1$:

\begin{align*}
\gamma_{2s+1,k} &= \frac{s!(s+k+1)!}{\Gamma\left(\frac{m}{2}+2s+k+1\right)^2}\Gamma\left(\frac{m}{2}+s\right)\Gamma\left(\frac{m}{2}+s + k\right)(m-2) \frac{m-2}{(2s+k+m-1)C_{2s+k}^{\frac{m}{2}-1}(1)}\\
&= \frac{s!(s+k+1)!\Gamma\left(\frac{m}{2}+s\right)}{\Gamma\left(\frac{m}{2}+2s+k\right)^2}\Gamma\left(\frac{m}{2}+s + k\right)(m-2) \frac{m-2}{2s+k+m-1}\frac{\Gamma(m-2)\Gamma(2s+k+2)}{\Gamma(2s+k+m-1)}\\
&= \frac{s!(s+k+1)!\Gamma\left(\frac{m}{2}+s\right)}{\Gamma\left(\frac{m}{2}+2s+k\right)^2}\Gamma\left(\frac{m}{2}+s + k\right)(m-2)\frac{\Gamma(m-1)\Gamma(2s+k+2)}{\Gamma(2s+k+m)}.
\end{align*}

\end{proof}

 \section*{Acknowledgements}
The authors would like to thank Roy Oste for his advice on hypergeometric series, which was especially useful in establishing Proposition \ref{prop Roy}. The work of HDB is supported by the Research Foundation Flanders (FWO) under Grant EOS 30889451. 


\end{document}